\documentclass[11pt]{amsart}

\usepackage[margin=1.12in]{geometry}
\usepackage{amsmath,amssymb,amsthm,mathtools,mathrsfs}
\usepackage{enumitem}
\usepackage{booktabs}
\usepackage{tikz-cd}
\usepackage{xcolor}
\usepackage{hyperref}
\usepackage{orcidlink}

\definecolor{darkgoldenrod}{rgb}{0.72,0.53,0.04}
\definecolor{goldmetallic}{rgb}{0.83,0.69,0.22}
\hypersetup{colorlinks=true,linkcolor=darkgoldenrod,filecolor=brown,urlcolor=goldmetallic,citecolor=darkgoldenrod}
\numberwithin{equation}{section}

\newcommand{\Q}{\mathbb Q}
\newcommand{\Z}{\mathbb Z}
\newcommand{\Fp}{\mathbb F_p}
\newcommand{\Qp}{\mathbb Q_p}
\newcommand{\Zp}{\mathbb Z_p}
\newcommand{\Gal}{\operatorname{Gal}}
\newcommand{\Hom}{\operatorname{Hom}}
\newcommand{\Ad}{\operatorname{Ad}}

\newcommand{\cE}{\mathcal E}
\newcommand{\frakg}{\mathfrak g}
\newcommand{\frakt}{\mathfrak t}
\newcommand{\chibar}{\overline\chi}

\newcommand{\Ann}{\operatorname{Ann}}
\newcommand{\Ass}{\operatorname{Ass}}

\newcommand{\GG}{\mathbf G}
\newcommand{\TT}{\mathbf T}
\newcommand{\Gder}{\mathbf G^{\mathrm{der}}}

\newcommand{\Gm}{\mathbf G_m}
\newcommand{\Lie}{\operatorname{Lie}}
\newcommand{\Ht}{\operatorname{ht}}
\newcommand{\SL}{\operatorname{SL}}
\newcommand{\GL}{\operatorname{GL}}

\newtheorem{theorem}{Theorem}[section]
\newtheorem{proposition}[theorem]{Proposition}
\newtheorem{lemma}[theorem]{Lemma}
\newtheorem{corollary}[theorem]{Corollary}
\theoremstyle{definition}

\newtheorem{remark}[theorem]{Remark}

\title[Galois representations ramified at one prime]{Galois representations ramified at one prime via relative deformation theory}
\author{Anwesh Ray}
\address{Chennai Mathematical Institute, H1, SIPCOT IT Park, Siruseri, Kelambakkam 603103, Tamil Nadu, India}
\email{anwesh@cmi.ac.in}
\date{}

\begin{document}

\begin{abstract}
Let $p$ be an odd prime and let $\GG/\Z$ be a split connected reductive
group with $\dim Z(\GG)\leq1$. Assume that a split maximal torus of
$\GG$ admits a cocharacter whose pairing with every simple root is
odd, and impose an explicit root-theoretic condition on the reduction
of $\Lie(\Gder)$ modulo $p$. We construct infinitely many continuous
representations $\rho:G_{\{p\}}\longrightarrow\GG(\Zp)$
which are unramified at every finite prime different from $p$ and whose
images contain a principal congruence subgroup of $\Gder(\Zp)$. When
the center has dimension one, the representations may be chosen to have
open image in $\GG(\Zp)$.

The construction continues the author's earlier work on
$\GL_n$-valued representations ramified at one prime, but replaces the
residual unobstructedness used there by a relative lifting argument. For $\GG=\GL_n$ all the required root-theoretic conditions are
automatic for every odd prime. We therefore obtain, for every odd
$p$ and every $n>1$, infinitely many representations
$G_{\{p\}}\to\GL_n(\Zp)$ with open image. In particular, this removes
the weak Vandiver-type hypothesis occurring in
\cite{Ray2023}, as well as the restriction $p\geq7$ in that
construction.
\end{abstract}

\maketitle

\section{Introduction}\label{sec:introduction}

\par Let $p$ be a prime and let $\GG$ be a reductive group. The problem of
constructing Galois representations with large image in
$\GG(\Zp)$ may be viewed as a $p$-adic analogue of the inverse Galois
problem. In the classical inverse Galois problem one asks which finite
groups occur as Galois groups over $\Q$. Here the target is instead a
compact $p$-adic group, and one asks for continuous representations of
the absolute Galois group whose image is open, or at least contains a
principal congruence subgroup of the derived group. A natural
refinement is to prescribe, or minimize, the set of primes at which the
representation is allowed to ramify.

\par There are many natural sources of Galois representations in arithmetic
geometry. When the dimension is two, representations with large image
arise abundantly from elliptic curves and modular forms. In higher
dimensions one may consider Galois representations associated with
motives, automorphic forms, or the Tate modules of abelian varieties.
These geometric constructions, however, frequently impose additional
structure on the image. For example, the representation on the Tate
module of a polarized abelian variety is constrained by the Weil
pairing. It is therefore difficult, by purely geometric means, to
produce higher-dimensional representations into $\GL_n(\Zp)$ whose
images contain open subgroups of $\SL_n(\Zp)$ and which at the same
time satisfy very restrictive ramification conditions. This suggests
a different point of view. Rather than insisting that the
representation arise from a geometric or automorphic object, one can
construct it directly by Galois-theoretic and deformation-theoretic
methods.

\par The ramification problem considered in this paper is obtained
by allowing ramification at only one finite prime. Let
$\Q_{\{p\}}$ be the maximal algebraic extension of $\Q$ which is
unramified at every finite prime different from $p$, and put $G_{\{p\}}
        :=
        \Gal(\Q_{\{p\}}/\Q)$. Thus a representation of $G_{\{p\}}$ is a representation of
$G_\Q$ which is unramified at every finite prime $\ell\neq p$. The
question is whether the very severe restriction on ramification is
compatible with having a large $p$-adic image.

\par This problem originates in work of Greenberg
\cite{Greenberg2016}. Let
$\bar\chi$ denote the mod-$p$ cyclotomic character and let $C$ be the
mod-$p$ class group of $\Q(\mu_p)$. The action of
$\Gal(\Q(\mu_p)/\Q)$ decomposes $C$ into eigenspaces
$C(\bar\chi^i)$. The prime $p$ is regular precisely when $C=0$.
Vandiver's conjecture predicts the vanishing of the even eigenspaces,
or equivalently of the $p$-primary part of the class group of the
maximal totally real subfield of $\Q(\mu_p)$. Thus the arithmetic of
the cyclotomic class group enters naturally into the attempt to
construct one-prime Galois representations. A theorem of Shafarevich
implies that when $p$ is regular, the maximal pro-$p$ extension of
$\Q(\mu_p)$ unramified outside the primes above $p$ has free pro-$p$ Galois
group of explicitly known rank. Greenberg exploits this freeness to
construct representations $G_{\{p\}}\longrightarrow\GL_n(\Zp)$
whose images contain an open subgroup of $\SL_n(\Zp)$, provided that
$p$ is regular and 
$p\geq4\lfloor n/2\rfloor+1$.

\par Motivated by Greenberg's construction, the author formulated in
\cite{Ray2021} the following basic problem: for a given prime $p$ and
$n>1$, does there exist a continuous representation
$G_{\{p\}}\to\GL_n(\Zp)$ whose image contains an open subgroup of
$\SL_n(\Zp)$? More generally, how small can the ramification set of a
$p$-adic representation with large image be?

\par The first deformation-theoretic treatment of this question was given
in \emph{loc.cit}. Instead of requiring the entire mod-$p$ class group
of $\Q(\mu_p)$ to vanish, one begins with a carefully chosen diagonal
residual representation and imposes vanishing only on the particular
cyclotomic eigenspaces which occur in its adjoint representation. If
$e_p$ denotes the index of irregularity, this gives infinitely many
representations unramified outside $p$ provided that $p$ is
sufficiently large in terms of $n$ and $e_p$. The essential
deformation-theoretic input is the vanishing $H^2(G_{\{p\}},\Ad\bar\rho)=0$ for the chosen residual representation $\bar\rho$. Once this
unobstructedness has been established, the remaining part of the
argument consists in modifying level lifts
by suitable $H^1$ Galois cohomology classes and then proving that the resulting image
contains a principal congruence subgroup.

\par The dependence on the irregularity index and on the size of $p$ was
substantially reduced in the author's subsequent paper
\cite{Ray2023}. There it is shown that for every $n>1$ and every
$p\geq7$ there are infinitely many representations
$G_{\{p\}}\to\GL_n(\Zp)$ whose images contain an explicitly bounded
principal congruence subgroup of $\SL_n(\Zp)$, subject, when
$p\equiv1\pmod4$, to a weak form of Vandiver's conjecture.
\par There are several related developments which help place this question
in context. Katz \cite{Katz2019} constructs finitely ramified
representations with large image by geometric methods and Hilbert
irreducibility, although the resulting ramification set is not
restricted to the single prime $p$. Maire \cite{Maire2023} gives a
different unconditional construction of representations with large
image in $\GL_n(\Zp)$. For $p\equiv3\pmod4$ the representations may be
taken to be unramified outside $p$, while for
$p\equiv1\pmod4$ his construction allows the additional finite prime
$2$. Maletto \cite{Maletto2023} develops a particularly explicit
version of the residual method for $\mathrm{GSp}_4$.

\par Of particular relevance here is Tang's extension of the
deformation-theoretic construction to reductive groups
\cite{Tang2023}. Let $\GG$ be split reductive with
$\dim Z(\GG)\leq1$. Tang starts with a cyclotomic representation in a
split maximal torus, decomposes the adjoint representation into its
root spaces, introduces root directions by Galois cohomology classes,
and uses the Chevalley commutator relations to obtain open image. In
this sense, Tang identifies the natural root-theoretic form of the
construction in \cite{Ray2021}. In this construction, the residual representation is chosen so that
the corresponding $H^2$ vanishes. This requires avoiding the
cyclotomic class-group eigenspaces that may contribute to the
obstruction and leads to a lower bound on $p$ depending on the root
datum and a bound for the irregularity index.

\par The aim of the present paper is to remove any regularity assumptions
on $p$ and prove results that do not assume any form of Vandiver's
conjecture. The deformation-theoretic approach belongs to the
Galois-cohomological lifting tradition initiated by Ramakrishna
\cite{Ramakrishna1999}. The argument used here is particularly reminiscent
of the relative deformation philosophy of
Fakhruddin--Khare--Patrikis \cite{FKP2021}. The key advantage is that
one no longer requires the obstruction group itself to vanish. As one
passes between representations modulo successive powers of $p$, there
are natural comparison maps between the corresponding adjoint modules
and hence between their second cohomology groups. At sufficiently high
levels these cohomological comparison maps become injective. Thus an
obstruction which vanishes after passing to the lower level must
already vanish at the higher level, and this is enough to continue the
lifting process even when the deformation problem is genuinely
obstructed.

\par We now state the main theorem. We first recall explicitly the two
root-theoretic conditions which enter its formulation. Let
$\TT\subset\GG$ be a split maximal torus, let
$\Phi=\Phi(\GG,\TT)$ be the associated root system, and let
$\Delta\subset\Phi$ be a set of simple roots. We say that
$(\GG,\TT,\Delta)$ \emph{admits an odd cocharacter} if there exists
$\lambda_0\in X_*(\TT)$ such that $\langle\alpha,\lambda_0\rangle
        \equiv1\pmod 2$ for all $
        \alpha\in\Delta$. The odd-cocharacter condition is automatic for several standard
classical groups. For the usual choices of maximal torus and simple
roots, one can in fact choose a cocharacter whose pairing with every
simple root is equal to $1$.  This holds for $\GL_n$,
$\mathrm{SO}_{2n+1}$, $\mathrm{SO}_{2n}$, and $\mathrm{GSp}_{2n}$.
\par Fix a pinning and corresponding Chevalley root vectors
$X_\alpha$, $\alpha\in\Phi$. Whenever
$\alpha,\beta,\alpha+\beta\in\Phi$, write
\[
        [X_\alpha,X_\beta]
        =
        N_{\alpha,\beta}X_{\alpha+\beta},
\]
and for $\delta\in\Delta$ put
$H_\delta=[X_\delta,X_{-\delta}]$. We say that an odd prime $p$ is
\emph{Lie-good for $\GG$} if every nonzero structure constant
$N_{\alpha,\beta}$ occurring in these root-generation relations is
nonzero modulo $p$, and if the reductions of the root vectors
$X_\alpha$ together with the simple coroot directions $H_\delta$ span $\Lie(\Gder)\otimes_{\Z}\Fp$.
For a fixed split integral
root datum it excludes only finitely many primes, and for
$\GG=\GL_n$ it is automatic for every prime.

\begin{theorem}\label{thm:main}
Let $\GG/\Z$ be a split connected reductive group with
$\dim Z(\GG)\leq1$. Fix a split maximal torus $\TT\subset\GG$ and a
set of simple roots $\Delta$. Assume that $(\GG,\TT,\Delta)$ admits an
odd cocharacter. Let $p$ be an odd prime which is Lie-good for $\GG$.
Then there exist infinitely many continuous representations $\rho:G_{\{p\}}\longrightarrow\GG(\Zp)$
whose images contain a principal congruence subgroup of
$\Gder(\Zp)$. If $\dim Z(\GG)=1$, the representations may be chosen so
that their images in $(\GG/\Gder)(\Zp)$ are open; in particular their
images are open in $\GG(\Zp)$.
\end{theorem}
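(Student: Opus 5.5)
The plan is to begin with a residual representation valued in the split torus and then lift it one power of $p$ at a time, using a relative comparison of obstruction classes in place of unobstructedness.

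\emph{Residual representation.} Let $\lambda_0$ be the odd cocharacter and put
\[
\bar\rho:=\lambda_0\circ\chibar^{\,k}:G_{\{p\}}\to\TT(\Fp)
\]
for a suitable exponent $k$. If $\dim Z(\GG)=1$, we twist this by a cocharacter of the central torus.

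The roots then act on $\Ad\bar\rho$ through characters $\chibar^{\,k\langle\alpha,\lambda_0\rangle}$. Since $\langle\alpha,\lambda_0\rangle$ is odd for every simple root $\alpha$, complex conjugation acts by $-1$ on each simple root space when $k$ is odd. This is the oddness needed for $H^1$ to be large on every root direction. Concretely, by the global Euler characteristic formula, $\dim H^1(G_{\{p\}},\Fp(\chibar^{\,j}))-\dim H^2(G_{\{p\}},\Fp(\chibar^{\,j}))=1$ for odd characters $\chibar^{\,j}\neq 1$. So there are nonzero classes $c_\delta\in H^1(G_{\{p\}},\frakg_\delta)$ and $c_{-\delta}\in H^1(G_{\{p\}},\frakg_{-\delta})$ for each $\delta\in\Delta$, even when $H^2$ is nonzero.

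\emph{Lifting.} First lift $\bar\rho$ to $\rho_0=\lambda_0\circ\chi^{k}$, the Teichmüller- or cyclotomic-valued torus lift to $\TT(\Zp)$, which is unobstructed trivially. Next, at the level $\rho_0\bmod p^m$ for some fixed $m\ge 2$, twist by $\exp(p^{m-1}c_{\pm\delta})$ to introduce root directions. This produces $\rho_m:G_{\{p\}}\to\GG(\Z/p^{m})$. These twists are chosen as reductions of classes that lift over $\Z/p^{N}$: they are images of $H^1$ of $\Zp(\chi^{j})$, which is torsion-free of rank $\ge 1$ for odd $j$. Hence the corresponding unipotent perturbations exist integrally. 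We then lift $\rho_m$ inductively to $\rho_n$ for $n>m$.

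The key step, and the expected main obstacle, is the following relative argument. The obstruction to lifting $\rho_n$ lies in $H^2(G_{\{p\}},\Ad\bar\rho)$. We compare it with the obstruction class of $\rho_{n}\bmod p^{n-r}$ under the map $\Ad\rho_{n}\to\Ad\rho_{n-r}$ induced by multiplication by $p^r$. The required statement is that for $n\gg m$ the comparison map on $H^2$ with $\Z/p^r$-coefficients is injective. I would try to prove this by filtering $\Ad\rho_n$ by root-height and reducing to the cyclic $\Zp$-modules $H^2(G_{\{p\}},\Zp(\chi^j))$. These are finite, since the Leopoldt conjecture is known for abelian fields. Once $n$ exceeds their exponent, the relevant Bockstein or comparison maps become injective. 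Since $\rho_n$ already lifts at the lower level, its obstruction class is killed there, so by injectivity it vanishes. This supplies all lifts, and the limit $\rho=\varprojlim\rho_n$ is unramified outside $p$ by construction.

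\emph{Large image.} Choose the level $m$ and the classes so that the image of $\rho$ modulo $p^{m}$ contains $1+p^{m-1}X_{\pm\delta}$ in each simple root direction and the torus element $\lambda_0(\chi^k(\sigma))$. Conjugating by the torus, which has eigenvalues $\chi^{k\langle\alpha,\lambda_0\rangle}$ of infinite order, spreads these elements over all of the root groups' $p^{m-1}$-level. Lie-goodness lets the Chevalley commutators $[X_\alpha,X_\beta]=N_{\alpha,\beta}X_{\alpha+\beta}$ and $H_\delta$ generate $\Lie(\Gder)\otimes\Fp$. Then the standard lemma applies: a closed subgroup of $\Gder(\Zp)$ whose level-$r$ image generates the full Lie algebra mod $p$ contains the principal congruence subgroup, for $p$ odd. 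This gives the first claim.

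When $\dim Z(\GG)=1$, compose with $\GG\to\GG/\Gder\cong\Gm$. By choosing the central twist so that this composite is a nontrivial power of $\chi$, its image is open in $\Zp^\times$. Together with the derived-group statement, this gives open image in $\GG(\Zp)$.

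Infinitely many distinct $\rho$ arise by varying $k$ (or the chosen $H^1$ classes), since these give distinct residual or level-$m$ representations.
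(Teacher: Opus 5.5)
There are genuine gaps. The most serious one is in the relative lifting step.

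\textbf{The relative lifting step does not give the tower you use.} The comparison that works is for one \emph{fixed} coefficient module. Suppose $\tau$ is a representation modulo $p^{j+1}$ with $\tau\equiv\rho_\infty\pmod{p^R}$ for an integral toral lift $\rho_\infty$, and put $T=\Ad\rho_\infty$.
\begin{itemize}
\item The obstruction to lifting $\tau$ across $\Z/p^{j+R+1}\Z\to\Z/p^{j+1}\Z$ lies in $H^2(G_{\{p\}},T/p^RT)$. It dies in $H^2(G_{\{p\}},T/p^{R-1}T)$ because $\tau$ already lifts to level $j+R$.
\item Injectivity of that map is a condition on the width $R$, not on $n\gg m$. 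It holds once $p^{R-1}$ kills the finite group $H^2(G_{\{p\}},T)$, using $H^3(G_{\{p\}},T)=0$.
\item The output is a lift of the truncation $\rho_n\bmod p^{n-R+1}$, not of $\rho_n$. Nothing makes the obstruction of $\rho_n$ itself in $H^2(G_{\{p\}},\Ad\bar\rho)$ vanish. So the $\rho_n$ are not compatible, and only the truncations have a limit.
\end{itemize}
This is fatal for where you put the root directions. The element $1+p^{m-1}c_{\pm\delta}$ lives in the top layer of $\GG(\Z/p^m\Z)$ and is erased by the first truncation. If $m-1<R$, the congruence with $\rho_\infty$ modulo $p^R$ needed to fix the coefficient module also fails. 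Filtering by root height does not rescue a non-toral coefficient module: once both $c_\delta$ and $c_{-\delta}$ are inserted, no such filtration is Galois-stable.

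The paper chooses $R$ first, then $A\gg R$, and uses integral cocycles $z_\alpha$ with the wide square-zero extension $\Z/p^{2A}\Z\to\Z/p^A\Z$. The representation $\kappa_A(F)\rho_{2A}$ is congruent to $\rho_\infty$ modulo $p^A$. Its root directions sit at depth $q=A+s+C<2A-R$, strictly below the level $j_0=2A-R$ where the tower starts, so they survive to the limit.

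\textbf{Finiteness of $H^2$.} Finiteness of $H^2(G_{\{p\}},\Zp(j))$ for all $j$ does not follow from Leopoldt, which handles only $j=0$. For even $j\le -2$ it is a case of Schneider's conjecture (nonvanishing of a $p$-adic zeta value) and is open in general. Such weights are unavoidable for you. For example, for $\GL_n$ with $n\ge3$ and all simple pairings of $\lambda_0$ equal to $1$, the root $e_3-e_1$ has weight $-2k$. A single infinite $H^2(G_{\{p\}},\Zp(d_\beta))$ makes $H^2(T/p^RT)\to H^2(T/p^{R-1}T)$ non-injective for every $R$. You need the paper's finiteness of the exceptional set $\cE_p$, obtained via $H^2(G_{\{p\}},(\Qp/\Zp)(m))^\vee\simeq e_mX[f_m]$ and finiteness of associated primes. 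The cocharacter must then be chosen to avoid $\cE_p$; large odd $k$ would do.

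\textbf{Isolating the simple root directions.} With $\lambda_0\circ\chi^k$ you cannot isolate individual simple root directions.
\begin{itemize}
\item Several elements of $\Delta\cup(-\Delta)$ may share a weight $d$.
\item $H^1(G_{\{p\}},\Zp(d))$ is torsion-free of rank one once $H^2(G_{\{p\}},\Qp(d))=0$. So your integral classes, restricted to $\Gal(\Q_{\{p\}}/F_\infty)$, are all $\Zp$-multiples of one homomorphism.
\item Hence the perturbation there lies in the span of the vectors $\sum_{d_\alpha=d}a_\alpha X_\alpha$, which Galois conjugation only rescales. The elements $1+p^{m-1}X_{\pm\delta}$ you claim need not exist.
\item Modulo $p$ it is worse: for $p=3$, all odd powers of $\chibar$ coincide.
\end{itemize}
The paper imposes pairwise distinct weights on $\Delta\cup(-\Delta)$. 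It separates the root lines $p$-adically using the eigenvalues $v^{d_\alpha}$ of $\Ad\rho_\infty(\sigma)$, at the cost of a factor $p^{s+C}$. It then commutes with $\rho(\sigma_0)$, using $\bar g^{d_\alpha}\neq1$ for odd $d_\alpha$, to place each $X_\alpha$ in the derived image.
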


We obtain the following
consequence for $\mathrm{GL}_n$.

\begin{corollary}\label{cor:GLn}
Let $p$ be any odd prime and let $n>1$. Then there exist infinitely
many continuous representations $\rho:G_{\{p\}}\longrightarrow\GL_n(\Zp)$
unramified outside $\{p,\infty\}$ whose images contain a principal
congruence subgroup of $\SL_n(\Zp)$. The determinant may be chosen to
have open image, and hence the resulting representations have open
image in $\GL_n(\Zp)$.
\end{corollary}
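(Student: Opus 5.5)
The corollary follows by specializing Theorem~\ref{thm:main} to $\GG=\GL_n$, so I only need to check its hypotheses and then translate the conclusion.

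Take $\TT$ to be the diagonal torus, with simple roots $\Delta=\{e_i-e_{i+1}\}$. Then $\dim Z(\GL_n)=1$, since the center is $\Gm$, and $\Gder=\SL_n$.

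\emph{Odd cocharacter.} Take $\lambda_0(t)=\mathrm{diag}(t^{n-1},t^{n-2},\dots,1)$. Then $\langle e_i-e_{i+1},\lambda_0\rangle=1$ for every $i$, so $(\GL_n,\TT,\Delta)$ admits an odd cocharacter.

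\emph{Lie-goodness.} Use the pinning $X_{e_i-e_j}=E_{ij}$. The commutator relation $[E_{ij},E_{jk}]=E_{ik}$ for $i\neq k$ shows that every nonzero structure constant equals $\pm1$, which is a unit modulo every $p$. For the spanning condition, the simple coroot directions are $H_\delta=E_{ii}-E_{i+1,i+1}$. Over any field, the off-diagonal $E_{ij}$ together with these $n-1$ elements form a basis of $\mathfrak{sl}_n$, the trace-zero matrices. Since $\Lie(\SL_n)\otimes_\Z\Fp=\mathfrak{sl}_n(\Fp)$, the spanning condition holds. Hence every odd prime $p$ is Lie-good for $\GL_n$.

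\emph{Conclusion.} Theorem~\ref{thm:main} now gives infinitely many $\rho:G_{\{p\}}\to\GL_n(\Zp)$ whose images contain a principal congruence subgroup of $\SL_n(\Zp)$. A representation of $G_{\{p\}}$ is by definition unramified at every finite $\ell\neq p$, i.e. unramified outside $\{p,\infty\}$.

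\emph{Open image.} Because $\dim Z=1$, the theorem also allows us to arrange that the image in $(\GL_n/\SL_n)(\Zp)=\Zp^\times$ is open. This map is exactly $\det\rho$, so the determinant has open image.

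It remains to see that the image $H=\rho(G_{\{p\}})$ is open in $\GL_n(\Zp)$. Suppose $H$ contains the principal congruence subgroup $K\subset\SL_n(\Zp)$ and $\det(H)$ is open in $\Zp^\times$. Then $H$ is a closed subgroup of $\GL_n(\Zp)$ with Lie algebra containing both $\mathfrak{sl}_n$ and a complement detected by the trace. Its Lie algebra is therefore all of $\mathfrak{gl}_n$, and $H$ is open. Equivalently, at the group level, $HK$ has finite index over $K\cdot\ker$, and openness follows from the exact sequence $1\to\SL_n\to\GL_n\to\Gm\to1$.

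The only point needing any care is the spanning condition modulo $p$, and it reduces to the elementary fact that the given elements form a basis of $\mathfrak{sl}_n$ over every field. There is no real obstacle; this corollary is a direct instance of the theorem.
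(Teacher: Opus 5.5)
Your proposal is correct and follows the same route as the paper: you specialize Theorem~\ref{thm:main} to the diagonal torus of $\GL_n$ and verify $\dim Z(\GL_n)=1$, the odd-cocharacter condition, and Lie-goodness via $X_{e_i-e_j}=e_{ij}$; the paper additionally redoes the choice of weights $a_1,\dots,a_n$ by hand, which you rightly leave to Lemma~\ref{lem:weights} inside the theorem's proof. Your last paragraph on openness is superfluous, since the theorem already asserts openness in $\GG(\Zp)$ via Lemma~\ref{lem:derived-and-quotient-open}; the Lie-algebra argument there is fine, but the ``group-level'' sentence is garbled (since $K\subset H$, one has $HK=H$) and should be dropped.
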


\section{Preliminaries}\label{sec:preliminaries}

In this section we discuss preliminary notions as well as set up notation used throughout the article.

Let $\overline\Q$ be a fixed algebraic closure of $\Q$ and let
$G_\Q=\Gal(\overline\Q/\Q)$. For a finite set $S$ of rational primes,
let $\Q_S$ be the maximal algebraic extension of $\Q$ contained in
$\overline\Q$ which is unramified at every finite prime outside $S$,
and set $G_S=\Gal(\Q_S/\Q)$. A continuous representation of
$G_\Q$ which is unramified at every finite prime
$\ell\neq p$ factors uniquely through $G_{\{p\}}$.

For every rational prime $\ell$ choose an embedding
$\overline\Q\hookrightarrow\overline\Q_\ell$ and thereby a
decomposition subgroup $G_\ell\subset G_\Q$.  We write $I_\ell$ for
its inertia subgroup.  The $p$-adic cyclotomic character is
\[
        \chi:G_{\{p\}}\longrightarrow\Zp^\times,
\]
and $\chibar$ denotes its reduction modulo $p$. Set $F_\infty=\Q(\mu_{p^\infty})$ and $
 \Gamma=\Gal(F_\infty/\Q)$.
Since $p$ is odd, we have that $\Gamma=\Delta_p\times\Gamma_1$, where $\Delta_p\simeq(\Z/p\Z)^\times$ and $\Gamma_1\simeq1+p\Zp\simeq\Zp$.
Let $\omega$ be the Teichmuller character of $\Delta_p$. Choose a
topological generator $\gamma$ of $\Gamma_1$ and set
$u=\chi(\gamma)$. We write
$\Lambda=\Zp[[\Gamma_1]]\simeq\Zp[[T]]$, with
$\gamma^{-1}=1+T$. Finally set $H=\Gal(\Q_{\{p\}}/F_\infty)$. 

Let $\GG/\Z$ be a split connected reductive group and let
$\TT\subset\GG$ be a split maximal torus. Fix a Borel subgroup
containing $\TT$ and let $\Phi=\Phi(\GG,\TT)$, $\Delta\subset\Phi$ and $\Phi^+\subset\Phi$ be the associated root system, simple roots, and positive roots respectively. Let
$\Gder$ be the derived subgroup. We set $\frakg_{\Zp}=\Lie(\GG)\otimes\Zp$, $\frakt_{\Zp}=\Lie(\TT)\otimes\Zp$ and $\frakg^{\mathrm{der}}_{\Zp}=\Lie(\Gder)\otimes\Zp$. For a coefficient ring $R$ we use the corresponding subscript $R$.

Choose a pinning. For every root $\alpha\in\Phi$ this gives a root
vector $X_\alpha$ spanning a free rank-one root submodule.  The
integral root decomposition is
\begin{equation}\label{eq:root-decomposition-integral}
        \frakg_{\Zp}
        =
        \frakt_{\Zp}
        \oplus
        \bigoplus_{\alpha\in\Phi}\Zp X_\alpha.
\end{equation}
If $\alpha+\beta\in\Phi$, the Chevalley relation has the form
\begin{equation}\label{eq:Chevalley-root-bracket}
        [X_\alpha,X_\beta]
        =N_{\alpha,\beta}X_{\alpha+\beta},
\end{equation}
where $N_{\alpha,\beta}\in\Z\setminus\{0\}$, and
\begin{equation}\label{eq:opposite-root-bracket}
        [X_\alpha,X_{-\alpha}]=H_\alpha
        \in\frakt_{\Zp}\cap\frakg^{\mathrm{der}}_{\Zp}.
\end{equation}
For $\alpha=\sum_{\delta\in\Delta}n_\delta\delta\in\Phi^+$ define
\[
        \Ht(\alpha)=\sum_{\delta\in\Delta}n_\delta,
\]
and set $\Ht(-\alpha)=-\Ht(\alpha)$. Let
\[
        X_*(\TT)
        :=
        \operatorname{Hom}_{\mathrm{alg.grp.}}(\mathbf G_m,\TT)
\]
denote the cocharacter lattice of the split maximal torus $\TT$, and
let
\[
        X^*(\TT)
        :=
        \operatorname{Hom}_{\mathrm{alg.grp.}}(\TT,\mathbf G_m)
\]
denote its character lattice. There is a natural perfect pairing
\[
        \langle\ ,\ \rangle:
        X^*(\TT)\times X_*(\TT)\longrightarrow\Z
\]
characterized by the identity
\[
        \alpha\circ\lambda(t)
        =
        t^{\langle\alpha,\lambda\rangle}
\]
for $\alpha\in X^*(\TT)$ and $\lambda\in X_*(\TT)$. In particular,
each root $\alpha\in\Phi(\GG,\TT)\subset X^*(\TT)$ may be paired with
a cocharacter.

\par We write
\[
        \Z^\Delta
        :=
        \prod_{\alpha\in\Delta}\Z,
\]
whose coordinates are indexed by the simple roots, and consider the
homomorphism
\[
        \vartheta:X_*(\TT)\longrightarrow\Z^\Delta
\]
defined by $\lambda\longmapsto
        \bigl(\langle\alpha,\lambda\rangle\bigr)_{\alpha\in\Delta}$. Thus an odd cocharacter is precisely a cocharacter $\lambda$ for which
every coordinate of $\vartheta(\lambda)$ is odd. Equivalently, after
reducing the above map modulo $2$, its image is the vector
\[
        (1,\ldots,1)\in(\Z/2\Z)^\Delta.
\]
Let $\lambda\in X_*(\TT)$ be a cocharacter. Composing with the
cyclotomic character gives
\begin{equation}\label{eq:toral-general}
        \rho_\lambda=\lambda\circ\chi:
        G_{\{p\}}\longrightarrow\TT(\Zp)\subset\GG(\Zp).
\end{equation}
For a root $\alpha$ put
\[
        d_\alpha=\langle\alpha,\lambda\rangle\in\Z.
\]
Conjugation by $\lambda(t)$ on the root line is multiplication by
$t^{d_\alpha}$. Consequently
\begin{equation}\label{eq:Ad-toral-decomposition}
 \Ad\rho_\lambda(\frakg_{\Zp})
 =
 \frakt_{\Zp}
 \oplus
 \bigoplus_{\alpha\in\Phi}\Zp(d_\alpha)X_\alpha,
\end{equation}
where $G_{\{p\}}$ acts trivially on $\frakt_{\Zp}$ and through
$\chi^{d_\alpha}$ on the root line.  This decomposition is the basic
bridge between the root datum and the cyclotomic cohomology considered
in Section~\ref{sec:cyclotomic}.

When $\dim Z(\GG)=1$, put
$\mathbf S=\GG/\Gder$. Since $\GG$ is split, $\mathbf S$ is a
one-dimensional split torus. The image of $\lambda$ in
$X_*(\mathbf S)$ will be denoted $\lambda^{\mathrm{ab}}$. When $\lambda^{\mathrm{ab}}\neq0$, the composition
\[
 G_{\{p\}}\xrightarrow{\rho_\lambda}\GG(\Zp)
 \longrightarrow\mathbf S(\Zp)
\]
is a nonzero integral power of $\chi$ and hence has open image.

\par We now make precise the second root-theoretic condition appearing in
the main theorem. Recall that $\Phi=\Phi(\GG,\TT)$ is the root system of
$\GG$ with respect to the fixed split maximal torus $\TT$, and that
$\Delta\subset\Phi$ is the chosen set of simple roots. Put $\frakg_{\Z}:=\Lie(\GG)$ and $\frakg_{\Z}^{\mathrm{der}}
        :=\Lie(\Gder)$, where $\Gder$ denotes the derived subgroup of $\GG$. Thus
$\frakg_{\Z}^{\mathrm{der}}$ is the integral Lie algebra of the
semisimple group $\Gder$.

\par Fix the pinning chosen above. For every root
$\alpha\in\Phi$, let $X_\alpha\in\frakg_{\Z}^{\mathrm{der}}$ denote the corresponding Chevalley root vector. The root submodule
associated with $\alpha$ is the free rank-one $\Z$-module
$\Z X_\alpha$. For each simple root $\delta\in\Delta$, put
\[
        H_\delta
        :=
        [X_\delta,X_{-\delta}]
        \in
        \Lie(\TT)\cap\frakg_{\Z}^{\mathrm{der}}.
\]
Thus the vectors $X_\alpha$ give the root directions in the derived
Lie algebra, while the elements $H_\delta$ give the simple coroot
directions.

\par Define
\[
        \mathcal L_{\mathrm{root}}
        :=
        \left\langle
        X_\alpha\;(\alpha\in\Phi),
        \ H_\delta\;(\delta\in\Delta)
        \right\rangle_{\Z}
        \subseteq
        \frakg_{\Z}^{\mathrm{der}}.
\]
After tensoring with $\Q$, the root vectors together with the simple
coroot directions span the full semisimple Lie algebra. Hence
\[
        \mathcal L_{\mathrm{root}}\otimes_{\Z}\Q
        =
        \frakg_{\Z}^{\mathrm{der}}\otimes_{\Z}\Q.
\]
It follows that $\mathcal L_{\mathrm{root}}$ has finite index in
$\frakg_{\Z}^{\mathrm{der}}$. We denote this index by
\[
        c_{\GG}
        :=
        \left[
        \frakg_{\Z}^{\mathrm{der}}:
        \mathcal L_{\mathrm{root}}
        \right].
\]
If $p\nmid c_{\GG}$, reduction modulo $p$ shows that the images of the
root vectors $X_\alpha$ and the simple coroot directions $H_\delta$
span $\frakg_{\Fp}^{\mathrm{der}}
        :=
        \frakg_{\Z}^{\mathrm{der}}\otimes_{\Z}\Fp$. This is one part of the Lie-good condition.

\par The other part concerns the Chevalley structure constants. Whenever
$\alpha,\beta,\alpha+\beta\in\Phi$, one has
\[
        [X_\alpha,X_\beta]
        =
        N_{\alpha,\beta}X_{\alpha+\beta},
\]
where $N_{\alpha,\beta}\in\Z\setminus\{0\}$. Set
\[
        N_{\GG}
        :=
        \prod_{\substack{\alpha,\beta\in\Phi\\
                          \alpha+\beta\in\Phi}}
        |N_{\alpha,\beta}|.
\]
Thus $N_{\GG}$ is a positive integer depending only on the chosen
integral root datum. If $p\nmid c_{\GG}N_{\GG}$, then every Chevalley constant which occurs in the root-generation
argument is nonzero modulo $p$, and the root and coroot directions
span $\frakg_{\Fp}^{\mathrm{der}}$. Consequently every odd prime
$p\nmid c_{\GG}N_{\GG}$ is Lie-good for $\GG$.

\par For $\GG=\mathrm{GL}_n$, we have that $X_{e_i-e_j}=e_{ij}$. Whenever a root sum occurs the corresponding structure constant is
$\pm1$; for example, $[e_{ij},e_{jk}]=e_{ik}$. Moreover
\[
        \{e_{ij}:i\neq j\}
        \cup
        \{e_{ii}-e_{i+1,i+1}:1\leq i<n\}
\]
is an integral basis of $\mathfrak{sl}_n$. Therefore $c_{\GL_n}=1$ and $N_{\GL_n}=1$. The Lie-good condition is consequently automatic for every prime in
the general linear case. 

For $r\ge1$ define
\[
 U_r(\GG)=\ker\!\left(\GG(\Zp)\to\GG(\Z/p^r\Z)\right)
\]
and similarly $U_r(\Gder)$.  Smoothness of the split reductive group
scheme gives a canonical identification of additive groups
\begin{equation}\label{eq:congruence-quotient}
 U_r(\GG)/U_{r+1}(\GG)
 \simeq\frakg_{\Fp}.
\end{equation}
We recall this identification in a form which will also be used for
square-zero coefficient rings.  Let $I=p^r\Z/p^{r+1}\Z$.  Since
$I^2=0$, the kernel of
$\GG(\Z/p^{r+1}\Z)\to\GG(\Z/p^r\Z)$ is naturally the tangent space
$\frakg_{\Fp}\otimes I$.  Thus an element $x\in U_r(\GG)$ has a
well-defined leading term
\[
        \ell_r(x)\in\frakg_{\Fp}.
\]
Multiplication in the congruence quotient corresponds to addition of
leading terms.

Two elementary calculations will be used repeatedly.  If
$x\in U_r(\GG)$ and $y\in U_s(\GG)$ have leading terms $X$ and $Y$,
then
\begin{equation}\label{eq:commutator-leading-general}
        [x,y]\in U_{r+s}(\GG),
        \quad\text{and}\quad
        \ell_{r+s}([x,y])=[X,Y].
\end{equation}
This may be checked after fixing any faithful integral representation
$\GG\hookrightarrow\GL_N$; the matrix computation is independent of
the chosen embedding because it is the intrinsic Lie bracket on the
tangent space.  Likewise, for odd $p$,
\begin{equation}\label{eq:p-power-leading-general}
        x^p\in U_{r+1}(\GG),
        \quad\text{and}\quad
        \ell_{r+1}(x^p)=X.
\end{equation}
Indeed, after embedding in $\GL_N$, write
$x=1+p^r\widetilde X+O(p^{r+1})$.  The binomial expansion gives
\[x^p=1+p^{r+1}\widetilde X+O(p^{r+2}).\] For
$2\le k\le p-1$, the term
$\binom pk(p^r\widetilde X)^k$ has valuation at least
$1+kr\ge r+2$, and the $k=p$ term has valuation
$pr\ge r+2$ because $p$ is odd.  The calculation also shows why $p=2$
is excluded from the uniform statement.

If $D\subset\Gder(\Zp)$ is a closed subgroup, define
\begin{equation}\label{eq:Phi-general}
 \Phi_r(D)
 =
 \{X\in\frakg^{\mathrm{der}}_{\Fp}:
 \text{there exists }d\in D\cap U_r(\Gder)
 \text{ with }\ell_r(d)=X\}.
\end{equation}
Multiplication and taking integral powers show that
$\Phi_r(D)$ is an $\Fp$-subspace. Equations
\eqref{eq:commutator-leading-general} and
\eqref{eq:p-power-leading-general} imply
\begin{equation}\label{eq:Phi-basic-properties}
 [\Phi_r(D),\Phi_s(D)]\subset\Phi_{r+s}(D),
  \quad\text{and}\quad
 \Phi_r(D)\subset\Phi_{r+1}(D).
\end{equation}

\section{Finite-level deformation theory and relative obstruction classes}
\label{sec:finite-deformation}

The relative lifting argument is the main point at which the present paper
differs from the residual constructions in \cite{Ray2021,Ray2023,Tang2023}.
Throughout this section $\mathcal G$ denotes an arbitrary profinite
group and $\GG/\Zp$ is a smooth affine group scheme; eventually
$\mathcal G=G_{\{p\}}$ and $\GG$ is our split reductive group.

Let $B'\twoheadrightarrow B$ be a surjection of finite local
$\Zp$-algebras with kernel $I$, and assume $I^2=0$.  Smoothness gives
surjectivity $\GG(B')\twoheadrightarrow \GG(B)$.
The kernel is canonically the additive group $\frakg\otimes I$.
We denote by
\[
        \kappa_I:
        \frakg\otimes I
        \longrightarrow
        \ker\!\left(\GG(B')\to\GG(B)\right)
\]
the inverse of this canonical identification. Thus
\[
 \kappa_I(X)\kappa_I(Y)=\kappa_I(X+Y)
\]
for $X,Y\in\frakg\otimes I$. If $g\in\GG(B)$ and $\widetilde g\in\GG(B')$ is any lift, then
conjugation by $\widetilde g$ on the kernel agrees with the adjoint
action of $g$ on $\frakg\otimes I$.  The action is independent of the
choice of lift.

Let $\tau:\mathcal G\longrightarrow\GG(B)$ be a continuous representation. We find that
$\frakg\otimes I$ is a continuous $\mathcal G$-module, which we denote
$\Ad\tau(\frakg\otimes I)$. Since the image of $\tau$ is finite, we
may choose a set-theoretic section of $\GG(B')\to\GG(B)$ on that image
and obtain a continuous map
\[
        \widetilde\tau:\mathcal G\longrightarrow\GG(B')
\]
lifting $\tau$ and satisfying $\widetilde\tau(1)=1$.  It need not be a
homomorphism.

\begin{lemma}\label{lem:obstruction}
There is a canonically defined class
\[
 o(\tau,B'/B)\in
 H^2\!\left(\mathcal G,\Ad\tau(\frakg\otimes I)\right)
\]
which vanishes if and only if $\tau$ lifts to a continuous
representation $\mathcal G\to\GG(B')$.
\end{lemma}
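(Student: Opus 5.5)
We will prove this by the standard cocycle construction. Using the set-theoretic lift $\widetilde\tau$ fixed above, define for $g,h\in\mathcal G$
\[
 c(g,h)=\widetilde\tau(g)\widetilde\tau(h)\widetilde\tau(gh)^{-1}.
\]
Since $\tau$ is a homomorphism, $c(g,h)$ maps to $1$ in $\GG(B)$. Hence it lies in $\ker(\GG(B')\to\GG(B))$, and we may write $c(g,h)=\kappa_I(o(g,h))$ with $o(g,h)\in\frakg\otimes I$. Continuity of $o$ is clear: $\widetilde\tau$ factors through the finite quotient of $\mathcal G$ by $\ker\tau$, and so does $o$. We also have $o(1,h)=o(g,1)=0$ because $\widetilde\tau(1)=1$.

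The first step is to verify the $2$-cocycle identity
\[
 g\cdot o(h,k)-o(gh,k)+o(g,hk)-o(g,h)=0.
\]
Expand $\widetilde\tau(g)\widetilde\tau(h)\widetilde\tau(k)$ in two ways. First,
\[
 \widetilde\tau(g)\widetilde\tau(h)\widetilde\tau(k)=c(g,h)\,\widetilde\tau(gh)\widetilde\tau(k)=c(g,h)\,c(gh,k)\,\widetilde\tau(ghk).
\]
Second,
\[
 \widetilde\tau(g)\widetilde\tau(h)\widetilde\tau(k)=\widetilde\tau(g)\,c(h,k)\,\widetilde\tau(g)^{-1}\,\widetilde\tau(g)\widetilde\tau(hk)=\bigl(\widetilde\tau(g)c(h,k)\widetilde\tau(g)^{-1}\bigr)\,c(g,hk)\,\widetilde\tau(ghk).
\]
The kernel is abelian, and conjugation by $\widetilde\tau(g)$ on it is the adjoint action of $\tau(g)$, as recalled before the lemma. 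Translating both expressions through the additive map $\kappa_I$ and comparing them gives the cocycle identity. This is the only step that needs care: one must use that $I^2=0$ makes the kernel abelian and that the conjugation action does not depend on the chosen lift. With those two facts, the identity follows from associativity alone.

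Next we check that the class is canonical. Any other continuous normalized lift has the form $\widetilde\tau'(g)=\kappa_I(a(g))\widetilde\tau(g)$ for a continuous cochain $a:\mathcal G\to\frakg\otimes I$. Recomputing $c'$ and again moving the kernel elements past $\widetilde\tau(g)$ by the adjoint action, we get
\[
 o'(g,h)=o(g,h)+a(g)+g\cdot a(h)-a(gh),
\]
so $o'=o+da$. Thus $o(\tau,B'/B):=[o]$ is independent of the lift.

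Finally, the vanishing criterion. If $\tau$ admits a continuous homomorphic lift, take $\widetilde\tau$ to be that lift; then $c\equiv1$ and the class is $0$. Conversely, suppose $o=da$ for some continuous cochain $a$. Replace $\widetilde\tau$ by $\kappa_I(-a(g))\widetilde\tau(g)$. By the formula above, the new cocycle is $o-da=0$, so the modified map is a homomorphism lifting $\tau$. It is continuous because $a$ and $\widetilde\tau$ are continuous and $\GG(B')$ is finite, hence discrete.
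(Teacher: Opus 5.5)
Your proposal is correct and follows essentially the same route as the paper: the cocycle defined by $\widetilde\tau(g)\widetilde\tau(h)=\kappa_I(C(g,h))\widetilde\tau(gh)$, the cocycle identity from associativity in the abelian square-zero kernel, the change-of-lift formula $C\mapsto C+db$, and the modification $\widetilde\tau(g)\mapsto\kappa_I(-b(g))\widetilde\tau(g)$ to kill a coboundary. Your remarks on continuity (factoring through the finite quotient $\mathcal G/\ker\tau$) and on normalization are accurate details that the paper leaves implicit.
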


\begin{proof}
For $g,h\in\mathcal G$, both
$\widetilde\tau(g)\widetilde\tau(h)$ and
$\widetilde\tau(gh)$ reduce to $\tau(gh)$ in $\GG(B)$.  Hence there is
a unique $C(g,h)\in\frakg\otimes I$ such that
\begin{equation}\label{eq:def-obstruction-cocycle}
 \widetilde\tau(g)\widetilde\tau(h)
 =\kappa_I(C(g,h))\widetilde\tau(gh).
\end{equation}
Associativity of multiplication in $\GG(B')$, together with the
additivity of the square-zero kernel, gives
\[
 C(g,h)+C(gh,k)
 =g\cdot C(h,k)+C(g,hk).
\]
Thus $C$ is a continuous $2$-cocycle.

If $\widetilde\tau'$ is another lift, there is a unique continuous
$1$-cochain $b:\mathcal G\to\frakg\otimes I$ with
\[
 \widetilde\tau'(g)=\kappa_I(b(g))\widetilde\tau(g).
\]
Substitution into \eqref{eq:def-obstruction-cocycle} shows that the
new obstruction cocycle is $C+db$.  Therefore the class $[C]$ is
independent of all choices; this is $o(\tau,B'/B)$.

If the class vanishes, write $C=db$. Replacing
$\widetilde\tau(g)$ by $\kappa_I(-b(g))\widetilde\tau(g)$ kills the
obstruction and produces a genuine homomorphism. Conversely a genuine lift
has zero obstruction.  This proves the lemma.
\end{proof}

\begin{remark}\label{rem:not-schlessinger-small}
The calculation does not require $B'\to B$ to be a small extension
in the Schlessinger sense. The only property used is $I^2=0$.
\end{remark}

\begin{lemma}\label{lem:obstruction-naturality}
Suppose
\[
        B'\twoheadrightarrow B''\twoheadrightarrow B
\]
are surjections of finite local rings. Put
$I=\ker(B'\to B)$ and $I''=\ker(B''\to B)$ and assume $I^2=0$.
Then the natural map $I\to I''$ carries
$o(\tau,B'/B)$ to $o(\tau,B''/B)$.
\end{lemma}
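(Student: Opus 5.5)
The plan is to compute both obstruction classes with compatible choices of set-theoretic lift and compare them at the level of cocycles. Write $\pi:B'\twoheadrightarrow B''$ for the given surjection. It restricts to a map $\pi|_I:I\to I''$, since $\ker(B'\to B)$ maps into $\ker(B''\to B)$. We also have $(I'')^2=\pi(I)^2=\pi(I^2)=0$, because $\pi$ is surjective and so $\pi(I)=I''$. Hence $o(\tau,B''/B)$ is defined by Lemma~\ref{lem:obstruction}. The map $\pi|_I$ induces a $\mathcal G$-equivariant map $\frakg\otimes I\to\frakg\otimes I''$: the adjoint action of $\tau$ on each side comes from the same $\tau(g)\in\GG(B)$, and the $\frakg$-factor is untouched. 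The statement therefore makes sense on $H^2$.

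\textbf{Step 1: compatible lifts.} Choose a continuous set-theoretic lift $\widetilde\tau:\mathcal G\to\GG(B')$ of $\tau$ with $\widetilde\tau(1)=1$, as before Lemma~\ref{lem:obstruction}. Put $\widetilde\tau''=\GG(\pi)\circ\widetilde\tau$. This is again continuous, normalized, and lifts $\tau$, since $B'\to B$ factors through $B''$. Because $o(\tau,B''/B)$ is independent of the lift by Lemma~\ref{lem:obstruction}, we may compute it using $\widetilde\tau''$.

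\textbf{Step 2: naturality of $\kappa$.} The key compatibility is
\[
\GG(\pi)\circ\kappa_I=\kappa_{I''}\circ(1\otimes\pi|_I)
\]
as maps $\frakg\otimes I\to\ker(\GG(B'')\to\GG(B))$. I expect this to be the only step needing care. I would verify it from the construction of the identification. For an affine group scheme $\GG=\operatorname{Spec}A$, an element of the kernel is a ring map $A\to B'$ of the form $e+D$, where $e$ is the counit composed with $\Zp\to B'$ and $D$ is a derivation $A\to I$ at the identity. This gives $\Hom(\omega_{\GG},I)=\frakg\otimes I$ (using smoothness, so that $\omega$ is projective of finite rank). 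Composing with $\pi$ sends $e+D$ to $e+\pi\circ D$. This is exactly the statement that the identification is functorial in the square-zero ideal. Alternatively, one can check it after a faithful embedding $\GG\hookrightarrow\GL_N$, where $\kappa_I(X)=1+X$ visibly commutes with $\pi$.

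\textbf{Step 3: compare cocycles.} Apply $\GG(\pi)$ to the defining relation \eqref{eq:def-obstruction-cocycle}:
\[
\widetilde\tau(g)\widetilde\tau(h)=\kappa_I(C(g,h))\widetilde\tau(gh).
\]
By Step 2 this gives
\[
\widetilde\tau''(g)\widetilde\tau''(h)=\kappa_{I''}\bigl(\pi(C(g,h))\bigr)\widetilde\tau''(gh).
\]
By the uniqueness in the definition, the obstruction cocycle of $\widetilde\tau''$ is $\pi\circ C$. Passing to cohomology classes, the map $H^2(\mathcal G,\Ad\tau(\frakg\otimes I))\to H^2(\mathcal G,\Ad\tau(\frakg\otimes I''))$ sends $o(\tau,B'/B)$ to $o(\tau,B''/B)$, which proves the lemma.
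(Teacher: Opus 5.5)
Your proposal is correct and is essentially the paper's argument: take one set-theoretic lift to $\GG(B')$, reduce it to $\GG(B'')$, and observe that the obstruction cocycle of the reduced lift is the image of the original one under $\frakg\otimes I\to\frakg\otimes I''$. You also make explicit three points the paper leaves implicit: that $(I'')^2=0$ because $\pi(I)=I''$ (so $o(\tau,B''/B)$ is defined), the naturality $\GG(\pi)\circ\kappa_I=\kappa_{I''}\circ(1\otimes\pi|_I)$, and the $\mathcal G$-equivariance of the coefficient map.
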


\begin{proof}
Choose one set-theoretic lift of $\tau$ to $\GG(B')$ and reduce it to
$\GG(B'')$.  The multiplicative obstruction of the reduced lift is exactly
the image of the original obstruction under
$\frakg\otimes I\to\frakg\otimes I''$.  Passing to cohomology gives
the assertion.
\end{proof}

Fix now an integral representation
\[
        \rho_\infty:\mathcal G\longrightarrow\GG(\Zp)
\]
and set
\[
        T=\Ad\rho_\infty(\frakg_{\Zp}),
        \qquad T_m=T/p^mT,
        \quad \text{and}\quad \rho_m=\rho_\infty\pmod{p^m}.
\]
The following elementary calculation is the coefficient bookkeeping
behind the relative method.

\begin{lemma}\label{lem:window-coeff}
Let $R\ge2$ and $j\ge R$. Set
\[
 B'=\Z/p^{j+R+1}\Z,
 \qquad
 B=\Z/p^{j+1}\Z,
 \quad \text{and}\quad 
 I=p^{j+1}\Z/p^{j+R+1}\Z.
\]
Then $I^2=0$, division by $p^{j+1}$ identifies $I$ additively with
$\Z/p^R\Z$, and if
$\tau:\mathcal G\to\GG(B)$ satisfies
$\tau\bmod p^R=\rho_R$, then
\[
        \Ad\tau(\frakg\otimes I)\simeq T_R
\]
as $\mathcal G$-modules.
\end{lemma}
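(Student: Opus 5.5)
The plan is to verify the three assertions in turn, each by direct computation.

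\emph{Square-zero kernel.} For $I^2=0$: in $B'=\Z/p^{j+R+1}\Z$ the product of two elements of $I$ is divisible by $p^{2j+2}$. Since $j\ge R$, we have $2j+2\ge j+R+2>j+R+1$, so every such product vanishes in $B'$. This is the only place where the hypothesis $j\ge R$ enters. It also guarantees that $B'\to B$ falls under Lemma~\ref{lem:obstruction} (cf. Remark~\ref{rem:not-schlessinger-small}).

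\emph{Additive identification.} The map $\Z/p^R\Z\to I$ given by $a\mapsto p^{j+1}a$ is well defined and injective, because $p^{j+1}a\equiv0\pmod{p^{j+R+1}}$ exactly when $p^R\mid a$. It is surjective by definition of $I$. Call the inverse ``division by $p^{j+1}$''. Consequently
\[
\frakg\otimes_{\Zp}I\simeq\frakg_{\Zp}\otimes\Z/p^R\Z=T/p^RT
\]
as abelian groups.

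\emph{Equivariance, the main point.} By the discussion preceding Lemma~\ref{lem:obstruction}, $g\in\mathcal G$ acts on $\frakg\otimes I$ through the adjoint action of $\tau(g)\in\GG(B)$. This action is induced by the $B$-linear adjoint representation $\GG(B)\to\GL(\frakg\otimes B)$ acting on the $B$-module $\frakg\otimes I$. The $B$-module structure on $I$ factors through $B/p^RB=\Z/p^R\Z$, since $p^R I=0$ in $B'$. It is compatible with the identification $I\simeq\Z/p^R\Z$, because multiplication by a scalar commutes with division by $p^{j+1}$. Hence the action of $\tau(g)$ on $\frakg\otimes I$ depends only on $\tau(g)\bmod p^R$. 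By hypothesis this reduction is $\rho_R(g)=\rho_\infty(g)\bmod p^R$, so the action agrees with the action of $\rho_\infty(g)$ on $T/p^RT=T_R$.

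The one point needing care is that the adjoint action commutes with base change. The adjoint representation is a morphism of group schemes $\GG\to\GL(\frakg)$ over $\Zp$. For any $B$-module $M$ of the form $\frakg\otimes M$, the action of $\GG(B)$ therefore factors through $\GG(B/\Ann_B M)$, by functoriality of $\Ad$ under $B\to B/\Ann M$. This is a routine check, though it should be stated explicitly because the identification of $\ker(\GG(B')\to\GG(B))$ with $\frakg\otimes I$ via $\kappa_I$ is canonical and natural in $I$.
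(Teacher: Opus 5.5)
Your proposal is correct and follows essentially the same route as the paper: the inequality $2j+2\ge j+R+2>j+R+1$ gives $I^2=0$, the map $a\mapsto p^{j+1}a$ identifies $\Z/p^R\Z$ with $I$, and the adjoint action of $\tau(g)$ on $\frakg\otimes I$ sees only $\tau(g)\bmod p^R=\rho_R(g)$. Phrasing this last step as ``the $B$-module structure on $I$ factors through $B/p^RB$, plus functoriality of $\Ad$'' is just a more formal version of the paper's computation $p^{j+1}X\mapsto p^{j+1}\Ad(\widetilde{\tau(g)})(X)$ followed by division by $p^{j+1}$ and reduction modulo $p^R$.
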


\begin{proof}
Because $j\ge R$,
\[
        2(j+1)\ge j+R+1,
\]
so the product of two elements of $I$ is divisible by
$p^{j+R+1}$ and hence $I^2=0$.  Every element of the tangent kernel may
be written uniquely as $p^{j+1}X$ with $X$ defined modulo $p^R$.
Conjugating by a lift of $\tau(g)$ gives
\[
 p^{j+1}X\longmapsto
 p^{j+1}\Ad(\widetilde{\tau(g)})(X).
\]
After division by $p^{j+1}$ and reduction modulo $p^R$, only
$\tau(g)\bmod p^R=\rho_R(g)$ remains.  Thus the action is the adjoint
action through $\rho_R$, which is exactly $T_R$.
\end{proof}

We can now state the fixed-width lifting proposition. Its point is
that the obstruction group at width $R$ is allowed to be nonzero.
Injectivity of one transition map is the required condition.

\begin{proposition}[Relative lifting]\label{prop:relative-lifting}
Let $R\ge2$ and suppose that
\[
 H^2(\mathcal G,T_R)\longrightarrow H^2(\mathcal G,T_{R-1})
\]
is injective. Let $j\ge R$, and suppose that a representation
\[
 \widetilde\rho_{j+R}:\mathcal G
 \longrightarrow\GG(\Z/p^{j+R}\Z)
\]
has been constructed whose reduction modulo $p^R$ is $\rho_R$.  Let
$\tau_{j+1}$ be its reduction modulo $p^{j+1}$.  Then $\tau_{j+1}$
admits a lift
\[
 \widetilde\rho_{j+R+1}:\mathcal G
 \longrightarrow\GG(\Z/p^{j+R+1}\Z).
\]
\end{proposition}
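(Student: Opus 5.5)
We lift $\tau_{j+1}$ in one step along the square-zero extension $B'=\Z/p^{j+R+1}\Z\twoheadrightarrow B=\Z/p^{j+1}\Z$ of Lemma~\ref{lem:window-coeff}. Since $j+1\ge R$ and $\widetilde\rho_{j+R}$ reduces to $\rho_R$ modulo $p^R$, the representation $\tau_{j+1}$ also reduces to $\rho_R$. Lemma~\ref{lem:window-coeff} therefore identifies the coefficient module $\Ad\tau_{j+1}(\frakg\otimes I)$ with $T_R$, where $I=p^{j+1}\Z/p^{j+R+1}\Z$. Lemma~\ref{lem:obstruction} then gives a class
\[
o:=o(\tau_{j+1},B'/B)\in H^2(\mathcal G,T_R),
\]
and $\tau_{j+1}$ lifts to $\GG(\Z/p^{j+R+1}\Z)$ exactly when $o=0$. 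By the assumed injectivity of $H^2(\mathcal G,T_R)\to H^2(\mathcal G,T_{R-1})$, it suffices to show that the image of $o$ in $H^2(\mathcal G,T_{R-1})$ vanishes.

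To compute that image, we factor $B'\twoheadrightarrow B''\twoheadrightarrow B$ with $B''=\Z/p^{j+R}\Z$. The kernel of $B''\to B$ is $I''=p^{j+1}\Z/p^{j+R}\Z$. The natural map $I\to I''$, after division by $p^{j+1}$, is reduction $\Z/p^R\Z\to\Z/p^{R-1}\Z$.

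We next check that the identification of $\Ad\tau_{j+1}(\frakg\otimes I'')$ with $T_{R-1}$ is compatible with this map. This is the same computation as in Lemma~\ref{lem:window-coeff}. We have $(I'')^2=0$ because $2(j+1)\ge j+R$. The action on $p^{j+1}X$, with $X$ taken modulo $p^{R-1}$, factors through $\rho_{R-1}$. Hence the induced map on coefficients is the reduction $T_R\to T_{R-1}$.

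Lemma~\ref{lem:obstruction-naturality} now says that the image of $o$ in $H^2(\mathcal G,T_{R-1})$ is $o(\tau_{j+1},B''/B)$. That class vanishes, because $\tau_{j+1}$ already has a genuine lift to $\GG(B'')$, namely $\widetilde\rho_{j+R}$. By injectivity, $o=0$, and Lemma~\ref{lem:obstruction} produces the required lift $\widetilde\rho_{j+R+1}$.

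The main point needing care is the compatibility of identifications: the two Lemma~\ref{lem:window-coeff}-type isomorphisms (for $I$ and $I''$) must intertwine the map $I\to I''$ with the reduction $T_R\to T_{R-1}$, so that the transition map in the hypothesis is really the map appearing in Lemma~\ref{lem:obstruction-naturality}. I would verify this directly on cochains. Choose a set-theoretic lift $\widetilde\tau$ to $\GG(B')$ and write its obstruction cocycle as $p^{j+1}C(g,h)$ with $C$ valued in $T_R$. Reducing $\widetilde\tau$ to $B''$ gives the obstruction cocycle $p^{j+1}(C\bmod p^{R-1})$. Both steps are formal.

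We also note that the lift $\widetilde\rho_{j+R+1}$ lifts $\tau_{j+1}$, not necessarily $\widetilde\rho_{j+R}$. This is exactly what the proposition claims, and it is why the argument never needs the possibly nonzero obstruction for lifting $\widetilde\rho_{j+R}$ itself.
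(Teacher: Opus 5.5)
Your proposal is correct and follows the paper's route. You take the obstruction to lifting $\tau_{j+1}$ along $\Z/p^{j+R+1}\Z\to\Z/p^{j+1}\Z$ in $H^2(\mathcal G,T_R)$, apply Lemma~\ref{lem:obstruction-naturality} with the intermediate ring $\Z/p^{j+R}\Z$ to see that its image in $H^2(\mathcal G,T_{R-1})$ is killed by the existing lift $\widetilde\rho_{j+R}$, and conclude by injectivity; your extra checks ($j+1\ge R$, $(I'')^2=0$, and compatibility of the coefficient identifications with $T_R\to T_{R-1}$) spell out what the paper leaves under ``naturality.''
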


\begin{proof}
Consider the square-zero extension
\[
 \Z/p^{j+R+1}\Z\longrightarrow\Z/p^{j+1}\Z.
\]
By Lemma~\ref{lem:window-coeff}, its coefficient module for the
obstruction to lifting $\tau_{j+1}$ is $T_R$.  Denote the resulting
class by
\[
        o_R(\tau_{j+1})\in H^2(\mathcal G,T_R).
\]
Now replace the upper ring by the intermediate quotient
$\Z/p^{j+R}\Z$.  The corresponding kernel has width $R-1$, so its
coefficient module is $T_{R-1}$.  Naturality of obstruction classes
shows that the image of $o_R(\tau_{j+1})$ under
\[
 H^2(\mathcal G,T_R)\longrightarrow H^2(\mathcal G,T_{R-1})
\]
is precisely the obstruction to lifting $\tau_{j+1}$ to level
$p^{j+R}$.  But such a lift is already given by
$\widetilde\rho_{j+R}$.  Hence the image of $o_R(\tau_{j+1})$ is zero.
Injectivity forces $o_R(\tau_{j+1})=0$, and
Lemma~\ref{lem:obstruction} gives the required lift.
\end{proof}

\begin{remark}\label{rem:relative-not-compatible}
The new representation modulo $p^{j+R+1}$ is not required to reduce to
$\widetilde\rho_{j+R}$ modulo $p^{j+R}$. Only the mod $p^{j+1}$ representation is fixed. This is the relative feature which replaces ordinary
unobstructedness.
\end{remark}
\section{Cyclotomic descent and finite rational obstruction}\label{sec:cyclotomic}

We now specialize to $G=G_{\{p\}}$.
The purpose of this section is only to prove that the rational obstruction $H^2(G_{\{p\}},\Qp(m))$ can be nonzero for at most finitely many integers $m$.
\par We let $Y:=H^1(H,\Qp/\Zp)$ and $X:=Y^\vee$. Since $H$ acts trivially on
$\Qp/\Zp$, one has
\[
        H^1(H,\Qp/\Zp)
        =
        \Hom_{\mathrm{cts}}(H,\Qp/\Zp).
\]
Every homomorphism in this group factors through the maximal abelian pro-$p$ quotient
of $H$. Consequently, by Pontryagin duality, $X$ is naturally identified with
this maximal abelian pro-$p$ quotient.

\par
Equivalently, let $M_\infty$ denote the maximal abelian pro-$p$ extension of
$F_\infty$ contained in $\Q_{\{p\}}$. Then $M_\infty/F_\infty$ is unramified
at every finite prime not lying above $p$, and
\[
        X\simeq\Gal(M_\infty/F_\infty).
\]
Thus $X$ is the standard $p$-ramified Iwasawa module attached to the
cyclotomic extension $F_\infty/\Q$. The action of
$\Gamma=\Gal(F_\infty/\Q)$ on $H$ by conjugation induces an action on $X$,
making $X$ a compact module over the Iwasawa algebra
$\Lambda=\Zp[[\Gamma_1]]$, and more precisely over $\Lambda[\Delta_p]$. It is finitely generated over $\Lambda$; see \cite[Proposition~11.3.1]{NSW}. We have that
\begin{equation}\label{eq:weak-leopoldt-H}
 H^2(H,\Qp/\Zp)=0,
\end{equation}
cf. \cite[Theorem~10.3.25]{NSW}. Since $\chi|_H=1$, the same vanishing holds after every Tate twist.

For $m\in\Z$, recall that
\[
 e_m=\frac{1}{p-1}
 \sum_{\delta\in\Delta_p}\omega(\delta)^{-m}\delta
 \in\Zp[\Delta_p]
\]
and set
\[
 f_m=\gamma^{-1}-u^{-m}\in\Lambda.
\]
We note that $f_m=T-(u^{-m}-1)$. In particular $(f_m)$ is a height-one prime and $\Lambda/(f_m)\simeq\Zp$.

\begin{proposition}\label{prop:cyclotomic-specialization}
For every $m\in\Z$ there is a natural isomorphism
\begin{equation}\label{eq:H2-specialization}
 H^2\!\left(G_{\{p\}},(\Qp/\Zp)(m)\right)^\vee
 \simeq e_mX[f_m].
\end{equation}
\end{proposition}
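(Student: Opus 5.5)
We would prove \eqref{eq:H2-specialization} with the Hochschild--Serre spectral sequence for the extension $1\to H\to G_{\{p\}}\to\Gamma\to1$ and coefficients $A=(\Qp/\Zp)(m)$. Since $\chi|_H=1$, as an $H$-module $A$ is just $\Qp/\Zp$. Hence
\[
H^q(H,A)=H^q(H,\Qp/\Zp)\otimes\Zp(m),
\]
which equals $Y(m)$ for $q=1$ and vanishes for $q=2$ by \eqref{eq:weak-leopoldt-H}. For $q=0$ we get $H^0(H,A)=A$. The profinite group $\Gamma\simeq\Delta_p\times\Zp$ has $p$-cohomological dimension $1$ on $p$-primary torsion modules, because $|\Delta_p|=p-1$ is prime to $p$ and $\Gamma_1\simeq\Zp$. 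So $E_2^{a,b}=H^a(\Gamma,H^b(H,A))$ vanishes for $a\ge2$, and the spectral sequence degenerates into short exact sequences. The terms contributing to total degree $2$ are then $E_2^{2,0}=0$, $E_2^{1,1}=H^1(\Gamma,Y(m))$ and $E_2^{0,2}=H^0(\Gamma,H^2(H,A))=0$. The differential $d_2\colon E_2^{0,1}\to E_2^{2,0}$ lands in a zero group, and every differential into $E^{1,1}$ comes from $E^{-1,\ast}=0$. Therefore
\[
H^2(G_{\{p\}},A)\simeq H^1(\Gamma,Y(m)).
\]

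Next we compute $H^1(\Gamma,Y(m))$. Taking $\Delta_p$-invariants is exact on $\Zp$-modules because $p-1$ is a unit. So $H^1(\Gamma,Y(m))=H^1(\Gamma_1,Y(m))^{\Delta_p}$, and the $\Delta_p$-invariants of $Y(m)$ form the eigenspace on which $\Delta_p$ acts on $Y$ through $\omega^{-m}$. The group $\Gamma_1$ is procyclic, generated by $\gamma$, so for a discrete torsion module $M$ we have $H^1(\Gamma_1,M)=M/(\gamma-1)M$. For $M=Y(m)$, the element $\gamma$ acts as $u^m\gamma_Y$, where $\gamma_Y$ is its action on $Y$. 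Thus the cokernel of $\gamma-1$ is the cokernel of $u^m\gamma_Y-1$ on the relevant eigenspace of $Y$, which equals the cokernel of $\gamma_Y-u^{-m}$ since $u^m$ is a unit.

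Finally we dualize. Pontryagin duality turns cokernels into kernels. The action of $\gamma$ on $X=Y^\vee$ is the contragredient, so the operator $\gamma_Y-u^{-m}$ dualizes to $\gamma^{-1}-u^{-m}=f_m$ acting on $X$. Likewise the $\omega^{-m}$-eigenspace of $Y$ dualizes to the part of $X$ cut out by $e_m$; by definition $e_m$ projects onto the $\omega^{m}$-isotypic piece. This gives $H^2(G_{\{p\}},A)^\vee\simeq (e_mX)[f_m]=e_mX[f_m]$, since $e_m$ commutes with $\Lambda$.

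The main obstacle is bookkeeping rather than anything structural. We must check that the signs match: the inversion $\gamma\leftrightarrow\gamma^{-1}$ under duality, and the character $\omega^{\pm m}$ in the idempotent. Both depend on the convention for the $\Gamma$-action on $X$ versus $Y$, so we would fix that convention explicitly first and match it to the chosen normalizations $f_m=\gamma^{-1}-u^{-m}$ and $e_m$. We would also verify carefully that the edge map in the spectral sequence is natural, so that the isomorphism is canonical.
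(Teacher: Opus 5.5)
Your proposal is correct and follows the paper's proof essentially step by step. Like the paper, you apply Hochschild--Serre for $1\to H\to G_{\{p\}}\to\Gamma\to1$, use weak Leopoldt and $\operatorname{cd}_p\Gamma=1$ to isolate $E_2^{1,1}=H^1(\Gamma,Y(m))$, split off $\Delta_p$, compute $H^1(\Gamma_1,-)$ as coinvariants, and then dualize. Your sign bookkeeping also matches the paper's: $\gamma_Y-u^{-m}$ transposes to $\gamma^{-1}-u^{-m}=f_m$, and the $\omega^{-m}$-eigenspace of $Y$ dualizes to $e_mX$.
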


\begin{proof}
Put $A:=\Qp/\Zp$ and $A_m:=A(m)$. Recall that
$H=\Gal(\Q_{\{p\}}/F_\infty)$ and
$\Gamma=\Gal(F_\infty/\Q)$. Since $F_\infty$ contains all
$p$-power roots of unity, the cyclotomic character is trivial on
$H$. Thus $H$ acts trivially on the Tate-twist factor of $A_m$.
Consequently, for every $q\geq0$, there is an identification
$H^q(H,A_m)=H^q(H,A)(m)$ as $\Gamma$-modules. 

We first apply the Hochschild--Serre spectral sequence to the exact
sequence
\[
 1\longrightarrow H\longrightarrow G_{\{p\}}
 \longrightarrow\Gamma\longrightarrow1.
\]
It has second page
\[
 E_2^{a,b}
 =
 H^a\bigl(\Gamma,H^b(H,A_m)\bigr)
 \Longrightarrow
 H^{a+b}(G_{\{p\}},A_m).
\]
We are interested in total degree $2$. There are therefore three
potential terms, namely $E_2^{0,2}$, $E_2^{1,1}$ and $E_2^{2,0}$. The first of these vanishes. Indeed, weak Leopoldt \eqref{eq:weak-leopoldt-H} for the
cyclotomic extension gives $H^2(H,A)=0$. Since the Tate twist does
not change the action of $H$, it follows that $H^2(H,A_m)=0$, and
hence $E_2^{0,2}=0$.

The third term also vanishes. Recall that
$\Gamma=\Delta_p\times\Gamma_1$, where $\Gamma_1\simeq\Zp$ and
$|\Delta_p|=p-1$. The group $\Gamma_1$ has $p$-cohomological
dimension one, while a finite group of order prime to $p$ has
$p$-cohomological dimension zero. Hence
$\operatorname{cd}_p(\Gamma)=1$. Since $A_m$ is a discrete
$p$-primary module, this gives
$E_2^{2,0}=H^2(\Gamma,A_m)=0$.

Thus the only possibly nonzero term in total degree $2$ is
\[
 E_2^{1,1}
 =
 H^1\bigl(\Gamma,H^1(H,A_m)\bigr).
\]
Since $H^1(H,A_m)=Y(m)$, this term is
$H^1(\Gamma,Y(m))$. Moreover, no differential can enter
$E_2^{1,1}$, since its possible source would have negative first
index, and no differential can leave it: the first possible target
is $E_2^{3,0}$, which vanishes because
$\operatorname{cd}_p(\Gamma)=1$. Therefore
$E_\infty^{1,1}=E_2^{1,1}$. As all the other graded pieces of the
Hochschild--Serre filtration on $H^2(G_{\{p\}},A_m)$ vanish, we
obtain a canonical isomorphism
\begin{equation}\label{eq:HS-H2}
 H^2(G_{\{p\}},A_m)
 \simeq H^1(\Gamma,Y(m)).
\end{equation}

We next compute the group on the right. Apply Hochschild--Serre to
the exact sequence $1\to\Gamma_1\to\Gamma\to\Delta_p\to1$. Since
$|\Delta_p|=p-1$ is prime to $p$, one has
$H^a(\Delta_p,D)=0$ for every $a>0$ and every discrete
$p$-primary $\Delta_p$-module $D$. Indeed, multiplication by
$|\Delta_p|$ is an automorphism of $D$, so the usual averaging
operator makes the invariants functor exact. It follows that
\[
 H^1(\Gamma,Y(m))
 \simeq H^1(\Gamma_1,Y(m))^{\Delta_p}.
\]

Let $\gamma$ be the fixed topological generator of $\Gamma_1$ and
put $u=\chi(\gamma)$. For any discrete $p$-primary
$\Gamma_1$-module $D$, continuous cohomology of
$\Gamma_1\simeq\Zp$ gives
$H^1(\Gamma_1,D)=D/(\gamma-1)D$. On the twisted module $Y(m)$,
the element $\gamma$ acts as $u^m\gamma$, where the second
$\gamma$ denotes its original action on $Y$. Consequently,
\[
 H^1(\Gamma_1,Y(m))
 \simeq Y/(u^m\gamma-1)Y.
\]
For brevity, denote this quotient by $Q_m$. Thus
$H^1(\Gamma,Y(m))=Q_m^{\Delta_p}$.

We now dualize. Since $X=Y^\vee$, the Pontryagin dual of $Q_m$ is
the annihilator in $X$ of $(u^m\gamma-1)Y$. Under the usual
contragredient action on the Pontryagin dual, the adjoint of
$\gamma$ acting on $Y$ is $\gamma^{-1}$ acting on $X$. Therefore
\[
 Q_m^\vee
 =
 \ker\bigl(u^m\gamma^{-1}-1:X\longrightarrow X\bigr).
\]
Multiplying the operator by the unit $u^{-m}$ does not change its
kernel. Hence
\[
 Q_m^\vee
 =
 \ker(\gamma^{-1}-u^{-m}:X\longrightarrow X)
 =
 X[f_m],
\]
where, by definition, $f_m=\gamma^{-1}-u^{-m}$.

It remains to keep track of the action of the finite group
$\Delta_p$. This is the point at which the idempotent $e_m$ enters.
For $\delta\in\Delta_p$, the action on the twisted module $Y(m)$ is
the original action on $Y$ multiplied by
$\chi(\delta)^m=\omega(\delta)^m$. Therefore an element of $Q_m$
is fixed by $\Delta_p$ after twisting precisely when, with respect
to the original action on $Y$, it belongs to the
$\omega^{-m}$-eigenspace. In terms of the standard idempotents,
this says
\[
 Q_m^{\Delta_p}=e_{-m}Q_m.
\]

Pontryagin duality reverses the finite character. More explicitly,
if $y$ transforms under $\Delta_p$ by $\omega^{-m}$, then the
contragredient action on a functional pairing nontrivially with
$y$ is by $\omega^m$. Equivalently, under the involution
$\delta\mapsto\delta^{-1}$ on $\Zp[\Delta_p]$, the idempotent
$e_{-m}$ is carried to $e_m$. It follows that
\[
 (Q_m^{\Delta_p})^\vee
 =
 (e_{-m}Q_m)^\vee
 \simeq e_mQ_m^\vee.
\]
Using $Q_m^\vee=X[f_m]$ and the fact that $f_m$ commutes with the
$\Delta_p$-action, we finally obtain
\[
 H^1(\Gamma,Y(m))^\vee
 \simeq e_mX[f_m].
\]
Combining this with \eqref{eq:HS-H2} proves
\eqref{eq:H2-specialization}.
\end{proof}

\begin{lemma}\label{lem:compact-H3-vanishing}
Let $p$ be odd and let $T$ be a finite free $\Zp$-module equipped with
a continuous action of $G_{\{p\}}$. Then
\[
        H^3(G_{\{p\}},T)=0.
\]
\end{lemma}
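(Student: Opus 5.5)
We will prove $H^3(G_{\{p\}},T)=0$ by reducing to finite coefficients, where the result is standard, and then passing to the limit. Continuous cohomology with coefficients in a finite free $\Zp$-module is the inverse limit of finite-level cohomology, up to a $\varprojlim^1$ term. Concretely, since $T=\varprojlim T/p^nT$ and each $H^q(G_{\{p\}},T/p^nT)$ is finite (finiteness of Galois cohomology of $G_S$ with finite coefficients, cf.\ \cite{NSW}), the Milnor/Tate exact sequence
\[
0\longrightarrow \varprojlim\nolimits^1_n H^{2}(G_{\{p\}},T/p^nT)\longrightarrow H^3(G_{\{p\}},T)\longrightarrow \varprojlim_n H^3(G_{\{p\}},T/p^nT)\longrightarrow 0
\]
holds. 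Because the groups $H^2(G_{\{p\}},T/p^nT)$ are finite, the system is Mittag-Leffler and the $\varprojlim^1$ term vanishes. It therefore suffices to show that $H^3(G_{\{p\}},M)=0$ for every finite $p$-primary $G_{\{p\}}$-module $M$. Each $T/p^nT$ is such a module.

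For the finite-level vanishing we use the standard fact that for $p$ odd one has $\operatorname{cd}_p(G_S)=2$ whenever $p\in S$. More precisely, by Poitou--Tate, or by \cite[Theorem~8.3.17 / Proposition~8.3.18]{NSW}, for a finite $p$-primary module $M$ and $q\ge3$,
\[
H^q(G_S,M)\simeq\bigoplus_{v\ \text{real}} H^q(\mathbb R,M).
\]
Here $\operatorname{Gal}(\mathbb C/\mathbb R)$ has order $2$. Since $M$ is $p$-primary with $p$ odd, multiplication by $2$ is invertible on $M$, so $\hat H^q(\mathbb R,M)=0$ for all $q$. Hence $H^3(G_{\{p\}},M)=0$. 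Alternatively, one can argue directly: the subgroup $G_{\{p\}}(\mu_p)$ of index $p-1$, prime to $p$, has no real places, so its $p$-cohomological dimension is $2$. Restriction–corestriction then shows that $H^3(G_{\{p\}},M)$ injects into $H^3$ of this subgroup, which vanishes.

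The only delicate point is the passage from finite to $\Zp$ coefficients. We handle it with the $\varprojlim^1$ argument, and this requires the finiteness of $H^2(G_{\{p\}},T/p^nT)$, which holds because $G_{\{p\}}$ satisfies the $p$-finiteness condition of \cite{NSW}. Oddness of $p$ enters exactly once, in killing the contribution of the real place.
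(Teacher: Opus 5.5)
Your proposal is correct and follows the paper's proof essentially step for step. Both use the $\varprojlim^1$ exact sequence from \cite{NSW}, get Mittag--Leffler from the finiteness of $H^2(G_{\{p\}},T/p^rT)$, and kill the finite-level $H^3$ using $\operatorname{cd}_p(G_{\{p\}})=2$ for odd $p$. The only difference is that you justify this cohomological-dimension bound (via the real place, or restriction--corestriction to $\Q(\mu_p)$), whereas the paper simply asserts it.
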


\begin{proof}
Write $T=\varprojlim_r T/p^rT$. By \cite[Theorem~2.7.5]{NSW}, continuous cohomology with compact
coefficients gives a natural exact sequence
\[
 0\longrightarrow
 \varprojlim\nolimits_r^{1}
 H^2(G_{\{p\}},T/p^rT)
 \longrightarrow
 H^3(G_{\{p\}},T)
 \longrightarrow
 \varprojlim_r
 H^3(G_{\{p\}},T/p^rT)
 \longrightarrow0.
\]
Here $\varprojlim^{1}$ denotes the first right-derived functor of the inverse limit. For every $r\geq1$, the module $T/p^rT$ is a finite discrete
$p$-primary $G_{\{p\}}$-module, and
$H^2(G_{\{p\}},T/p^rT)$ is finite; see
\cite[Theorem~8.3.19]{NSW}. For each fixed $r$, the images of the
groups at higher levels in $H^2(G_{\{p\}},T/p^rT)$ form a descending
chain of subgroups of a finite group and hence eventually stabilize.
Thus the inverse system satisfies the Mittag--Leffler condition, and
\cite[Proposition~2.7.4]{NSW} gives
\[
        \varprojlim\nolimits_r^{1}
        H^2(G_{\{p\}},T/p^rT)=0.
\]

Since $p$ is odd, $\operatorname{cd}_p(G_{\{p\}})=2$. Therefore
\[
        H^3(G_{\{p\}},T/p^rT)=0
\]
for every $r\geq1$. Both outer terms in the preceding exact sequence
vanish, and hence
\[
        H^3(G_{\{p\}},T)=0.
\]
\end{proof}

\begin{lemma}\label{lem:rational-rank}
For every $m\in\Z$, the groups $H^i(G_{\{p\}},\Zp(m))$ are finitely generated $\Zp$-modules for $i=1,2$, and the canonical maps
\[
 H^i(G_{\{p\}},\Zp(m))\otimes_{\Zp}\Qp
 \longrightarrow
 H^i(G_{\{p\}},\Qp(m))
 \qquad \text{for}\quad i=1,2
\]
are isomorphisms.  In particular, the degree-two map
\[
 H^2(G_{\{p\}},\Zp(m))\otimes_{\Zp}\Qp
 \longrightarrow
 H^2(G_{\{p\}},\Qp(m))
\]
is an isomorphism.  Moreover
\[
 \dim_{\Qp}H^2(G_{\{p\}},\Qp(m))
 =
 \operatorname{corank}_{\Zp}
 H^2\!\left(G_{\{p\}},(\Qp/\Zp)(m)\right).
\]
\end{lemma}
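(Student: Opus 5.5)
Finite generation comes first. Write $T=\Zp(m)$ and $T_r=T/p^rT$. Each $H^i(G_{\{p\}},T_r)$ is finite by \cite[Theorem~8.3.19]{NSW}, since $G_{\{p\}}$ has finite cohomology with finite $p$-primary coefficients. Continuous cohomology with compact coefficients satisfies \cite[Theorem~2.7.5]{NSW}
\[
 H^i(G_{\{p\}},T)\simeq\varprojlim_r H^i(G_{\{p\}},T_r);
\]
the $\varprojlim^1$ terms vanish by the Mittag--Leffler argument already used in Lemma~\ref{lem:compact-H3-vanishing}. Now use the long exact sequence of $0\to T\xrightarrow{p}T\to T_1\to0$. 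It shows that $H^i(G_{\{p\}},T)/p$ injects into the finite group $H^i(G_{\{p\}},T_1)$. The group $H^i(G_{\{p\}},T)$ is a profinite $\Zp$-module, being an inverse limit of finite groups. By topological Nakayama, it is therefore finitely generated over $\Zp$.

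Next I compare with $\Qp$-coefficients. The cochain complex computing continuous cohomology with coefficients $\Qp(m)=T\otimes\Qp=\varinjlim_{k}p^{-k}T$ commutes with this colimit. The reason is compactness of $G_{\{p\}}^n$: a continuous cochain has compact image in $\Qp(m)$, hence lands in some $p^{-k}T$. Therefore $C^\bullet(\Qp(m))=C^\bullet(T)\otimes_{\Zp}\Qp$, and since $\Qp$ is flat over $\Zp$, cohomology commutes with $\otimes\Qp$. This gives the isomorphisms for $i=1,2$, and indeed in all degrees.

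For the corank statement I use $0\to T\to T\otimes\Qp\to(\Qp/\Zp)(m)\to0$. The long exact sequence gives
\[
 H^2(T)\to H^2(\Qp(m))\to H^2((\Qp/\Zp)(m))\to H^3(T)=0,
\]
where the vanishing is Lemma~\ref{lem:compact-H3-vanishing}. The cokernel of $H^2(T)\to H^2(T)\otimes\Qp$ is $(H^2(T)/\mathrm{tors})\otimes\Qp/\Zp$, of corank $\operatorname{rank}_{\Zp}H^2(T)$. So $H^2((\Qp/\Zp)(m))$ is an extension of the cokernel of the next map by this divisible group. That next map is $H^2((\Qp/\Zp)(m))\to H^3(T)$, which is zero.

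A cleaner route for the last step is to work in the limit. Since $(\Qp/\Zp)(m)=\varinjlim T_r$ and discrete cohomology commutes with direct limits, the sequence above shows $H^2((\Qp/\Zp)(m))\cong H^2(T)\otimes\Qp/\Zp$. Its corank is $\operatorname{rank}H^2(T)=\dim_{\Qp}H^2(\Qp(m))$. The one point to justify is that $H^2(\Qp(m))\to H^2((\Qp/\Zp)(m))$ factors as $H^2(T)\otimes\Qp\twoheadrightarrow H^2(T)\otimes\Qp/\Zp$. I expect this step to be the main obstacle. Exactness at $H^3(T)=0$ gives surjectivity, and the kernel is the image of $H^2(T)$, which has the right rank.
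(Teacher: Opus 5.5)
Your proposal is correct and follows essentially the same route as the paper. Both arguments use three ingredients:
\begin{enumerate}
\item topological Nakayama applied via $H^i(G_{\{p\}},\Zp(m))/p\hookrightarrow H^i(G_{\{p\}},\Fp(m))$;
\item passage to $\Qp$ by taking the union of the lattices $p^{-k}\Zp(m)$;
\item the long exact sequence of $0\to\Zp(m)\to\Qp(m)\to(\Qp/\Zp)(m)\to0$, cut off by $H^3(G_{\{p\}},\Zp(m))=0$.
\end{enumerate}
Two of your justifications make precise points the paper only asserts: that $H^i(G_{\{p\}},\Zp(m))$ is profinite, and that $C^\bullet(\Qp(m))=C^\bullet(\Zp(m))\otimes_{\Zp}\Qp$ because a continuous cochain on the compact $G_{\{p\}}^n$ lands in some $p^{-k}\Zp(m)$.
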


\begin{proof}
By \cite[Theorem~8.3.19]{NSW},
$H^2(G_{\{p\}},\Fp(m))$ is finite. By
Lemma~\ref{lem:compact-H3-vanishing}, we have
\[
        H^3(G_{\{p\}},\Zp(m))=0.
\]
Hence the coefficient sequence
\[
0\longrightarrow \Zp(m)
\xrightarrow{p}
\Zp(m)
\longrightarrow
\Fp(m)
\longrightarrow 0
\]
gives
\[
H^2(G_{\{p\}},\Zp(m))/pH^2(G_{\{p\}},\Zp(m))
\simeq
H^2(G_{\{p\}},\Fp(m)).
\]
Thus the reduction modulo $p$ of the compact $\Zp$-module
$H^2(G_{\{p\}},\Zp(m))$ is finite. By the topological Nakayama
lemma, it follows that $H^2(G_{\{p\}},\Zp(m))$ is finitely
generated over $\Zp$. The same coefficient sequence gives an injection
\[
H^1(G_{\{p\}},\Zp(m))/pH^1(G_{\{p\}},\Zp(m))
\hookrightarrow
H^1(G_{\{p\}},\Fp(m)).
\]
The group on the right is finite by
\cite[Theorem~8.3.19]{NSW}, so another application of the topological
Nakayama lemma gives finite generation of
$H^1(G_{\{p\}},\Zp(m))$.

Continuous cohomology in this finite cohomological-dimension setting
commutes with passage from a $\Zp$-lattice to its associated
$\Qp$-vector space. Equivalently, passing to the direct limit through
the lattices
\[
 \Zp\cdot p^{-k}(m)\hookrightarrow\Zp\cdot p^{-k-1}(m)
\]
gives
\[
 H^2(G_{\{p\}},\Zp(m))\otimes_{\Zp}\Qp
 \simeq H^2(G_{\{p\}},\Qp(m)).
\]
The identical argument in degree one gives
\[
 H^1(G_{\{p\}},\Zp(m))\otimes_{\Zp}\Qp
 \simeq H^1(G_{\{p\}},\Qp(m)).
\]
For this compatibility, see also \cite[Chapter~2, \S7]{NSW}.

For the corank assertion, use
\[
 0\longrightarrow\Zp(m)
 \longrightarrow\Qp(m)
 \longrightarrow(\Qp/\Zp)(m)
 \longrightarrow0.
\]
By Lemma~\ref{lem:compact-H3-vanishing}, the resulting long exact
sequence ends with
\[
 H^2(G_{\{p\}},\Zp(m))
 \longrightarrow H^2(G_{\{p\}},\Qp(m))
 \longrightarrow H^2(G_{\{p\}},(\Qp/\Zp)(m))
 \longrightarrow0.
\]
The image of the first term is a full $\Zp$-lattice in the
finite-dimensional $\Qp$-space in the middle. The quotient therefore
has $\Zp$-corank equal to the dimension of that $\Qp$-space.
\end{proof}

The next commutative-algebra observation is what makes the exceptional set finite.

\begin{lemma}\label{lem:associated-prime}
Let $M$ be a finitely generated
$\Lambda=\Zp[[T]]$-module, and let $f=T-a$ with $a\in p\Zp$.
If $M[f]$ has positive $\Zp$-rank, then $(f)$ is an associated
prime of $M$.
\end{lemma}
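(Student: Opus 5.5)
The plan is to produce a nonzero element of $M$ whose annihilator is exactly $(f)$. Since $\Lambda$ is Noetherian, such an element exhibits $(f)$ as an associated prime. Note first that $f=T-a$ with $a\in p\Zp$ is a distinguished polynomial of degree one. So $\Lambda/(f)\simeq\Zp$ via $T\mapsto a$, and $(f)$ is a height-one prime.

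First, I reduce to the torsion-free part of $M[f]$. The submodule $M[f]$ is a $\Lambda/(f)=\Zp$-module, and it is finitely generated over $\Zp$ because $\Lambda$ is Noetherian and $M[f]\subset M$ is finitely generated over $\Lambda$. By hypothesis it has positive $\Zp$-rank. Hence there is an element $x\in M[f]$ whose $\Zp$-span is free of rank one, i.e.\ $p^kx\neq0$ for all $k$. Concretely, take $x$ to be a lift of a basis vector of $M[f]/M[f]_{\mathrm{tors}}$, or simply any element of infinite additive order.

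Next, I show $\Ann_\Lambda(x)=(f)$. Certainly $f\in\Ann(x)$. Conversely, let $g\in\Ann(x)$. By the Weierstrass division theorem (or simply because $\Lambda/(f)\simeq\Zp$), write $g=qf+c$ with $c\in\Zp$; explicitly, $c=g(a)$. Then $cx=gx-qfx=0$. Since $x$ has infinite order and $\Zp x\simeq\Zp$, this forces $c=0$, so $g\in(f)$. Hence $\Ann(x)=(f)$. Because $(f)$ is prime, $\Lambda x\simeq\Lambda/(f)$, and $(f)\in\Ass(M)$ by definition.

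The only potentially delicate point is the existence of $x$ of infinite order, i.e.\ that positive rank of the finitely generated $\Zp$-module $M[f]$ gives a non-torsion element. This is immediate from the structure theorem for finitely generated $\Zp$-modules. The identification $\Lambda/(f)\simeq\Zp$ with $g\mapsto g(a)$ requires $a\in p\Zp$, so that evaluation at $a$ converges; this is exactly the hypothesis. I expect no real obstacle.
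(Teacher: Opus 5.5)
Your proposal is correct and follows essentially the same route as the paper. Both arguments pick a $\Zp$-non-torsion element $x\in M[f]$, use $\Lambda/(f)\simeq\Zp$ (your decomposition $g=qf+g(a)$) to show that any $g\in\Ann_\Lambda(x)$ has zero image in $\Zp$, conclude $\Ann_\Lambda(x)=(f)$, and note that $(f)$ is prime.
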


\begin{proof}
Since $M[f]$ has positive $\Zp$-rank, there exists
$x\in M[f]$ which is not $\Zp$-torsion. In particular, $x$ is not
annihilated by any power of $p$.

Since $fx=0$, the action of $\Lambda$ on the cyclic submodule
$\Lambda x$ factors through $\Lambda/(f)$. Because $a\in p\Zp$,
evaluation at $T=a$ gives an isomorphism
$\Lambda/(f)\simeq\Zp$. Thus $\Lambda x$ may be regarded as a
$\Zp$-module generated by $x$.

We claim that $\Ann_\Lambda(x)=(f)$. The inclusion
$(f)\subseteq\Ann_\Lambda(x)$ is immediate from $fx=0$. Conversely,
suppose that $g\in\Lambda$ satisfies $gx=0$, and let $\bar g$ denote
its image in $\Lambda/(f)\simeq\Zp$. Then $\bar g x=0$. If
$\bar g\neq0$, write $\bar g=p^r u$ with $r\geq0$ and
$u\in\Zp^\times$. Since multiplication by $u$ is an automorphism of
the $\Zp$-module $\Lambda x$, the equality $\bar g x=0$ implies
$p^r x=0$, contradicting the choice of $x$. Hence $\bar g=0$, so
$g\in(f)$.

Therefore $\Ann_\Lambda(x)=(f)$. Since
$\Lambda/(f)\simeq\Zp$ is an integral domain, $(f)$ is prime, and
hence, by definition, $(f)\in\Ass_\Lambda(M)$.
\end{proof}

\begin{proposition}\label{prop:finite-exceptional}
There is a finite set $\cE_p\subset\Z$ such that
\begin{equation}\label{eq:rational-H2-good}
 H^2(G_{\{p\}},\Qp(m))=0
\end{equation}
for every $m\notin\cE_p$.  Moreover $H^2(G_{\{p\}},\Qp)=0$.
\end{proposition}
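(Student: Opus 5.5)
We combine Lemma~\ref{lem:rational-rank}, Proposition~\ref{prop:cyclotomic-specialization} and Lemma~\ref{lem:associated-prime}. By Lemma~\ref{lem:rational-rank},
\[
\dim_{\Qp}H^2(G_{\{p\}},\Qp(m))=\operatorname{corank}_{\Zp}H^2(G_{\{p\}},(\Qp/\Zp)(m)),
\]
and by Proposition~\ref{prop:cyclotomic-specialization} this corank equals the $\Zp$-rank of $e_mX[f_m]$. So it suffices to show that $e_mX[f_m]$ has positive $\Zp$-rank for only finitely many $m$, and has rank zero when $m=0$.

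For the finiteness, note that $e_m$ depends only on $m\bmod (p-1)$, so it is enough to fix a residue class $i\in\Z/(p-1)\Z$ and work with the finitely generated $\Lambda$-module $M_i:=e_iX$. This is a direct summand of $X$, hence finitely generated by \cite[Proposition~11.3.1]{NSW}. Since $f_m=T-(u^{-m}-1)$ and $u\in1+p\Zp$, we have $a_m:=u^{-m}-1\in p\Zp$, so Lemma~\ref{lem:associated-prime} applies. Thus if $M_i[f_m]$ has positive rank, then $(f_m)\in\Ass_\Lambda(M_i)$. Because $\Lambda$ is Noetherian and $M_i$ is finitely generated, $\Ass_\Lambda(M_i)$ is finite. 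The primes $(f_m)$ are pairwise distinct for distinct $m$: since $\Lambda/(f_m)\simeq\Zp$ via $T\mapsto a_m$, equality $(f_m)=(f_{m'})$ forces $a_m=a_{m'}$, i.e.\ $u^{m-m'}=1$, and $u$ is not a root of unity because $\gamma$ topologically generates $\Gamma_1\simeq 1+p\Zp$. Hence only finitely many $m$ in each of the $p-1$ residue classes are exceptional, and we take $\cE_p$ to be the union of these finite sets.

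For the second assertion we avoid Iwasawa theory and use an Euler characteristic computation instead. Tate's global Euler characteristic formula for $G_{\{p\}}$ with $\Fp$-coefficients gives
\[
h^0-h^1+h^2=-\dim(\Fp)^{c=-1}=0,
\]
since complex conjugation acts trivially on $\Fp$. Passing to $\Qp$-coefficients through Lemma~\ref{lem:rational-rank} and the analogous lattice argument, we get $\dim H^0-\dim H^1+\dim H^2=0$ for $\Qp$. Here $H^0(G_{\{p\}},\Qp)$ is one-dimensional. Moreover $H^1(G_{\{p\}},\Qp)=\Hom(G_{\{p\}}^{\mathrm{ab}},\Qp)$ is one-dimensional, because by class field theory the maximal abelian pro-$p$ extension of $\Q$ unramified outside $p$ is $\Q(\mu_{p^\infty})$, whose Galois group has $\Zp$-rank one. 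Hence $H^2(G_{\{p\}},\Qp)=0$.

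The main obstacle is this last step. One has to justify the passage of the Euler characteristic from finite to rational coefficients, which uses the vanishing of $H^3$ from Lemma~\ref{lem:compact-H3-vanishing} together with finite generation over $\Zp$. An alternative route works directly with $e_0X[f_0]$ and relies on the known nonvanishing of $e_0$ of the characteristic series of $X$ at $T=0$, which is essentially Leopoldt for $\Q$; the Euler characteristic route avoids needing that.
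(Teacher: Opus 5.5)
Your proposal is correct and follows essentially the same route as the paper. Finiteness comes from Lemma~\ref{lem:associated-prime}, finiteness of $\Ass_\Lambda(e_mX)$ over the $p-1$ residue classes, and distinctness of the primes $(f_m)$; the trivial twist comes from the Euler characteristic with $h^0=h^1=1$. The only difference is that the paper cites the $p$-adic Euler--Poincar\'e formula directly, whereas you derive it from Tate's mod-$p$ formula by rank/torsion bookkeeping, using finite generation of $H^i(G_{\{p\}},\Zp)$ and $H^3(G_{\{p\}},\Zp)=0$, which is a valid derivation.
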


\begin{proof}
By Proposition~\ref{prop:cyclotomic-specialization} and Lemma~\ref{lem:rational-rank}, a nonzero rational group in \eqref{eq:rational-H2-good} implies that $e_mX[f_m]$ has positive $\Zp$-rank.  Lemma~\ref{lem:associated-prime} therefore shows that $(f_m)$ is an associated prime of the finitely generated module $e_mX$.  Note that $e_m$ depends only on $m$ modulo $p-1$. Thus there are $p-1$ modules $e_m X$ to consider and for each module, the set $\Ass_\Lambda(e_mX)$ is finite; see \cite[Theorem~6.5(i)]{MatsumuraCRT}. Finally the ideals $(f_m)$ are pairwise distinct because $u$ has infinite order in $1+p\Zp$.  Thus only finitely many integers $m$ can occur.

The trivial twist requires a separate argument. Since $\Qp$ has trivial
$G_{\{p\}}$-action, one has $H^0(G_{\{p\}},\Qp)=\Qp$. Moreover,
\[
        H^1(G_{\{p\}},\Qp)
        =
        \Hom_{\mathrm{cts}}(G_{\{p\}},\Qp).
\]
By global class field theory, the maximal abelian pro-$p$ quotient of
$G_{\{p\}}$ has $\Zp$-rank one. Indeed, its unique infinite
$\Zp$-extension is the cyclotomic $\Zp$-extension of $\Q$. Hence
$\dim_{\Qp}H^1(G_{\{p\}},\Qp)=1$.

We now apply the global Euler--Poincar\'e characteristic formula for
$p$-adic representations; see \cite[Lemma~9.7]{Kisin2003}. For a
finite-dimensional $\Qp$-representation $V$ of $G_{\{p\}}$, one has
\[
 \sum_{i=0}^{2}(-1)^i
 \dim_{\Qp}H^i(G_{\{p\}},V)
 =
 \dim_{\Qp}V^{c=1}-\dim_{\Qp}V,
\]
where $c$ denotes complex conjugation.

For the trivial representation $V=\Qp$, complex conjugation acts
trivially, so the right-hand side is zero. Since
$H^0(G_{\{p\}},\Qp)$ and $H^1(G_{\{p\}},\Qp)$ both have dimension
one, it follows that
\[
        H^2(G_{\{p\}},\Qp)=0.
\]
\end{proof}

\section{The integral toral representation}\label{sec:toral}

Let $L=X_*(\TT)$.  By hypothesis there is
$\lambda_0\in L$ such that
$\langle\alpha,\lambda_0\rangle$ is odd for every
$\alpha\in\Delta$. Put $\mathcal B=\Delta\cup(-\Delta)$.

\begin{lemma}[Choice of cocharacter]\label{lem:weights}
Let $\cE\subset\Z$ be finite. There exists a cocharacter
$\lambda\in X_*(\TT)$ such that:
\begin{enumerate}[label=\textup{(\arabic*)}]
\item $\langle\alpha,\lambda\rangle$ is odd for every
      $\alpha\in\Delta$;
\item $\langle\beta,\lambda\rangle\notin\cE$ for every
      $\beta\in\Phi$;
\item the integers $\langle\alpha,\lambda\rangle$, for
      $\alpha\in\mathcal B$, are pairwise distinct;
\item if $\dim Z(\GG)=1$, the image
      $\lambda^{\mathrm{ab}}\in X_*(\GG/\Gder)$ is nonzero.
\end{enumerate}
\end{lemma}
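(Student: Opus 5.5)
We look for $\lambda$ of the form $\lambda=\lambda_0+2\mu$ with $\mu\in X_*(\TT)$. Condition (1) then holds automatically, because $\langle\alpha,\lambda\rangle=\langle\alpha,\lambda_0\rangle+2\langle\alpha,\mu\rangle$ is odd for every $\alpha\in\Delta$. The remaining conditions will be obtained by choosing $\mu$ off a finite union of proper affine hyperplanes and cosets in the lattice $L=X_*(\TT)$.

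\textbf{Reducing to avoidance conditions.} For each root $\beta$, write $\beta=\sum_\delta n_\delta\delta$. The functional $\mu\mapsto\langle\beta,\mu\rangle$ is a nonzero linear form on $L$. It is nonzero because the roots are nonzero characters and the pairing $X^*(\TT)\times X_*(\TT)\to\Z$ is perfect. So for each $e\in\cE$ and $\beta\in\Phi$, the set
\[
\{\mu:\langle\beta,\lambda_0+2\mu\rangle=e\}
\]
is either empty or a coset of a proper sublattice $\ker\beta$, of rank $\operatorname{rk}L-1$.

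For (3), take distinct $\alpha,\alpha'\in\mathcal B$. Then $\alpha-\alpha'$ is a nonzero character: $\Delta$ is linearly independent, so $\alpha\neq\alpha'$ in $\mathcal B=\Delta\cup(-\Delta)$ forces $\alpha-\alpha'\neq0$, including the case $\alpha'=-\alpha$, where $\alpha-\alpha'=2\alpha$. Hence each equality $\langle\alpha,\lambda\rangle=\langle\alpha',\lambda\rangle$ again cuts out either the empty set or a coset of a proper sublattice.

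For (4), suppose $\dim Z(\GG)=1$. Then $X_*(\TT)\to X_*(\GG/\Gder)\simeq\Z$ is a nonzero homomorphism $\pi$: after tensoring with $\Q$ it is surjective, since $\TT\to\GG/\Gder$ is a surjection of tori. The bad set $\{\mu:\pi(\lambda_0)+2\pi(\mu)=0\}$ is therefore a coset of the proper sublattice $\ker\pi$, or empty.

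\textbf{Concluding.} A free abelian group of finite rank is not a finite union of cosets of proper subgroups of infinite index. One way to see this is to pass to $L\otimes\Q$: a finite union of proper affine subspaces cannot cover the lattice $L$, since a line in a generic integral direction meets each subspace in at most one point. Alternatively one can use Neumann's lemma. Every sublattice above has rank $\operatorname{rk}L-1$, hence infinite index, so some $\mu\in L$ avoids all the bad sets. The resulting $\lambda=\lambda_0+2\mu$ satisfies (1)--(4). In fact infinitely many such $\mu$ exist, which will be useful for producing infinitely many representations.

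\textbf{Main point to check.} The only genuinely nontrivial verification is that the relevant linear forms are nonzero on $X_*(\TT)$. This is delicate when $\GG$ has a center, where $X_*(\TT)$ need not be spanned by coroots. It follows from perfectness of the pairing, since characters are determined by their pairings with cocharacters. One should also note the degenerate case where $\Delta$ is empty ($\GG$ a torus). There (1)--(3) are vacuous and only (4) remains, which again follows from $\pi\neq0$.
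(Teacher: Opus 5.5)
Your proof is correct and follows the paper's approach: restrict to the coset $\lambda_0+2X_*(\TT)$, which gives (1) for free, and observe that the failures of (2) and (3) lie in finitely many proper affine hyperplanes, which cannot cover that coset. The only difference is in (4), which you fold into the same avoidance argument via the nonzero map $X_*(\TT)\to X_*(\GG/\Gder)$; the paper instead translates by $2N\zeta$ for a nonzero central cocharacter $\zeta\in X_*(Z(\GG)^0)$, which leaves every root pairing unchanged and excludes at most one value of $N$.
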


\begin{proof}
Put $L=X_*(\TT)$ and begin with the coset
$\lambda_0+2L$, where $\lambda_0$ is the odd cocharacter fixed above.
Every element of this coset has the same simple-root pairings modulo
$2$ as $\lambda_0$. Hence condition~(1) holds automatically.

We next impose conditions~(2) and~(3). Fix a root $\beta\in\Phi$ and
an integer $e\in\cE$. The condition
$\langle\beta,\lambda\rangle=e$ defines a proper affine hyperplane in
$L\otimes_{\Z}\Q$. Indeed, the root $\beta$ is a nonzero character of
$\TT$, and hence the linear functional
$\lambda\mapsto\langle\beta,\lambda\rangle$ is nonzero on
$L\otimes_{\Z}\Q$.

Likewise, if $\alpha,\beta\in\mathcal B$ are distinct, then the
condition
$\langle\alpha,\lambda\rangle=\langle\beta,\lambda\rangle$ is
equivalent to
$\langle\alpha-\beta,\lambda\rangle=0$. Since $\alpha\neq\beta$ as
characters of $\TT$, the character $\alpha-\beta$ is nonzero, and this
again defines a proper hyperplane in $L\otimes_{\Z}\Q$.

There are only finitely many roots, finitely many elements of $\cE$,
and finitely many pairs of elements of $\mathcal B$. Consequently the
cocharacters which fail either condition~(2) or condition~(3) lie in
a finite union of proper affine hyperplanes. A finite union of proper
affine hyperplanes cannot contain the full-rank lattice coset
$\lambda_0+2L$. We may therefore choose $\lambda$ in this coset which
satisfies conditions~(1)--(3). In fact, there are infinitely many such
choices.

Suppose now that $\dim Z(\GG)=1$. Since $\GG$ is split, the connected
center $Z(\GG)^0$ is a one-dimensional split torus. Choose a nonzero
cocharacter $\zeta\in X_*(Z(\GG)^0)$. The natural map
$Z(\GG)^0\to\GG/\Gder$ has finite kernel, namely
$Z(\GG)^0\cap\Gder$. It follows that the image
$\zeta^{\mathrm{ab}}\in X_*(\GG/\Gder)$ is nonzero. Since every root
of $\GG$ is trivial on the center, replacing $\lambda$ by
$\lambda+2N\zeta$ does not change any of the root pairings
$\langle\beta,\lambda\rangle$. Thus conditions~(1)--(3) remain
unchanged.

The lattice $X_*(\GG/\Gder)$ has rank one. Hence
$\lambda^{\mathrm{ab}}+2N\zeta^{\mathrm{ab}}$ can vanish for at most
one integer $N$. Choosing any other $N$ gives condition~(4), while
preserving conditions~(1)--(3).
\end{proof}
Apply Lemma~\ref{lem:weights} with $\cE=\cE_p\cup\{0\}$. Fix the resulting cocharacter $\lambda$ and define
\begin{equation}\label{eq:rho-infty}
 \rho_\infty=\lambda\circ\chi:
 G_{\{p\}}\longrightarrow\TT(\Zp)\subset\GG(\Zp).
\end{equation}
For every root $\beta$ put
\[
        d_\beta=\langle\beta,\lambda\rangle.
\]
Then $d_{-\beta}=-d_\beta$, every simple-root weight is odd, and every
root weight lies outside $\cE_p$.  Put $T=\Ad\rho_\infty(\frakg_{\Zp})$ and $T_r=T/p^rT$. By \eqref{eq:Ad-toral-decomposition}, we have that
\begin{equation}\label{eq:T-decomposition}
 T=\frakt_{\Zp}\oplus
 \bigoplus_{\beta\in\Phi}\Zp(d_\beta)X_\beta.
\end{equation}

\begin{proposition}\label{prop:H2-finite}
The group $H^2(G_{\{p\}},T)$ is finite.
\end{proposition}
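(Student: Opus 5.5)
The plan is to decompose $T$ along \eqref{eq:T-decomposition}, reduce to Tate twists of $\Zp$, and show that the relevant degree-two groups are finitely generated over $\Zp$ with zero rank.

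Since cohomology commutes with finite direct sums of $G_{\{p\}}$-modules,
\[
 H^2(G_{\{p\}},T)\simeq H^2(G_{\{p\}},\frakt_{\Zp})\oplus\bigoplus_{\beta\in\Phi}H^2(G_{\{p\}},\Zp(d_\beta)).
\]
The torus summand $\frakt_{\Zp}$ carries trivial action and is free of finite rank. It is therefore a finite direct sum of copies of $\Zp=\Zp(0)$. It thus suffices to show that $H^2(G_{\{p\}},\Zp(m))$ is finite for $m=0$ and for every $m=d_\beta$ with $\beta\in\Phi$.

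Fix such an $m$. By Lemma~\ref{lem:rational-rank}, $H^2(G_{\{p\}},\Zp(m))$ is a finitely generated $\Zp$-module, and
\[
 H^2(G_{\{p\}},\Zp(m))\otimes_{\Zp}\Qp\simeq H^2(G_{\{p\}},\Qp(m)).
\]
A finitely generated $\Zp$-module is finite exactly when its rank vanishes. So it is enough to see that $H^2(G_{\{p\}},\Qp(m))=0$.

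For $m=0$ this is the last assertion of Proposition~\ref{prop:finite-exceptional}. For $m=d_\beta$, the cocharacter $\lambda$ was chosen via Lemma~\ref{lem:weights} with $\cE=\cE_p\cup\{0\}$, so condition (2) gives $d_\beta\notin\cE_p$. Proposition~\ref{prop:finite-exceptional} then gives $H^2(G_{\{p\}},\Qp(d_\beta))=0$. Each summand is therefore finite, and a finite direct sum of finite groups is finite.

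There is no real obstacle once the earlier results are in hand. The only points needing care are the compatibility of continuous cohomology with finite direct sums of compact coefficient modules, which is immediate at the level of cochains, and the fact that the twist $0$ coming from $\frakt_{\Zp}$ is handled by the separate Euler characteristic computation rather than by the exceptional set $\cE_p$.
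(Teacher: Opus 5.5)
Your proof is correct and follows essentially the same route as the paper. You decompose $T$ via \eqref{eq:T-decomposition}, use Lemma~\ref{lem:rational-rank} for finite generation and the rational comparison, and kill the rational $H^2$ with Proposition~\ref{prop:finite-exceptional}: its last assertion for the toral summand, and $d_\beta\notin\cE_p$ for the root lines. The only cosmetic difference is that you conclude finiteness summand by summand, whereas the paper first shows $H^2(G_{\{p\}},T)\otimes_{\Zp}\Qp\simeq H^2(G_{\{p\}},T\otimes_{\Zp}\Qp)=0$ and concludes from that.
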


\begin{proof}
By \eqref{eq:T-decomposition}, we have a decomposition of
$G_{\{p\}}$-modules
\[
        T
        =
        \frakt_{\Zp}
        \oplus
        \bigoplus_{\beta\in\Phi}
        \Zp(d_\beta)X_\beta,
\]
where the action on $\frakt_{\Zp}$ is trivial. Thus
$\frakt_{\Zp}$ is a finite direct sum of copies of $\Zp$.

Applying Lemma~\ref{lem:rational-rank} to the trivial twist and to
each of the finitely many twists $d_\beta$, we find that
$H^2(G_{\{p\}},T)$ is a finitely generated $\Zp$-module and that
\[
        H^2(G_{\{p\}},T)\otimes_{\Zp}\Qp
        \simeq
        H^2\!\left(G_{\{p\}},T\otimes_{\Zp}\Qp\right).
\]

After tensoring \eqref{eq:T-decomposition} with $\Qp$, the toral
summand is a finite direct sum of copies of the trivial representation
$\Qp$, while the root summand indexed by $\beta$ is
$\Qp(d_\beta)X_\beta$. Proposition~\ref{prop:finite-exceptional}
gives
\[
        H^2(G_{\{p\}},\Qp)=0
\]
and, since $d_\beta\notin\cE_p$ for every $\beta\in\Phi$,
\[
        H^2(G_{\{p\}},\Qp(d_\beta))=0.
\]
Hence
\[
        H^2\!\left(
        G_{\{p\}},T\otimes_{\Zp}\Qp
        \right)=0.
\]
It follows that
\[
        H^2(G_{\{p\}},T)\otimes_{\Zp}\Qp=0.
\]
Therefore $H^2(G_{\{p\}},T)$ is a finitely generated torsion
$\Zp$-module, and hence is finite.
\end{proof}

Choose $e\ge0$ such that
\begin{equation}\label{eq:e-kills-H2}
        p^e H^2(G_{\{p\}},T)=0.
\end{equation}

\begin{proposition}\label{prop:H2-stability}
For every $r\ge e+1$, the natural reduction map
\[
 H^2(G_{\{p\}},T_r)
 \longrightarrow
 H^2(G_{\{p\}},T_{r-1})
\]
is an isomorphism.
\end{proposition}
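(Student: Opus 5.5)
The plan is to compare both finite-level groups with the finite group $H^2(G_{\{p\}},T)$ through the coefficient sequences
\[
 0\longrightarrow T\xrightarrow{\,p^r\,}T\longrightarrow T_r\longrightarrow0 .
\]
Then I show that the transition map is induced by the natural projection $H^2(G_{\{p\}},T)/p^r\to H^2(G_{\{p\}},T)/p^{r-1}$. This projection is the identity once $r-1\ge e$.

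First, since $T$ is a finite free $\Zp$-module, multiplication by $p^r$ is injective on $T$ with cokernel $T_r$. Taking continuous cohomology of compact modules gives the exact segment
\[
 H^2(G_{\{p\}},T)\xrightarrow{\,p^r\,}H^2(G_{\{p\}},T)\longrightarrow H^2(G_{\{p\}},T_r)\longrightarrow H^3(G_{\{p\}},T).
\]
Lemma~\ref{lem:compact-H3-vanishing} applies because $p$ is odd, so the last term vanishes. Hence reduction induces a natural isomorphism
\[
 H^2(G_{\{p\}},T)/p^rH^2(G_{\{p\}},T)\xrightarrow{\ \sim\ }H^2(G_{\{p\}},T_r)
\]
for every $r\ge1$.

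Second, I check naturality in $r$. The reduction $T\to T_{r-1}$ factors as $T\to T_r\to T_{r-1}$. By functoriality of continuous cohomology, the following square commutes: the left vertical map is the natural projection $H^2(T)/p^r\to H^2(T)/p^{r-1}$, and the right vertical map is the transition map in the statement. Concretely, the composite $H^2(G_{\{p\}},T)\to H^2(G_{\{p\}},T_r)\to H^2(G_{\{p\}},T_{r-1})$ equals the reduction map $H^2(G_{\{p\}},T)\to H^2(G_{\{p\}},T_{r-1})$. The two horizontal maps are the isomorphisms from the first step.

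Finally, let $r\ge e+1$. By \eqref{eq:e-kills-H2} we have $p^{r}H^2(G_{\{p\}},T)=p^{r-1}H^2(G_{\{p\}},T)=0$. So both quotients equal $H^2(G_{\{p\}},T)$ itself, and the projection between them is the identity. The commutative square then shows that $H^2(G_{\{p\}},T_r)\to H^2(G_{\{p\}},T_{r-1})$ is an isomorphism.

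The only genuinely nontrivial input is the surjectivity $H^2(T)\to H^2(T_r)$, which rests on the vanishing of $H^3(G_{\{p\}},T)$. That vanishing is already supplied by Lemma~\ref{lem:compact-H3-vanishing}, and the finiteness of $H^2(G_{\{p\}},T)$ comes from Proposition~\ref{prop:H2-finite}. The rest is bookkeeping with the long exact sequence.
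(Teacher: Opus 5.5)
Your proposal is correct and follows the paper's proof essentially verbatim. You use $H^3(G_{\{p\}},T)=0$ to identify $H^2(G_{\{p\}},T_r)$ with $H^2(G_{\{p\}},T)/p^r H^2(G_{\{p\}},T)$, check that the transition map becomes the natural projection, and note that this projection is the identity once $p^{r-1}$ kills $H^2(G_{\{p\}},T)$.
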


\begin{proof}
It follows from Lemma \ref{lem:compact-H3-vanishing} that $H^3(G_{\{p\}},T)=0$. Consider the short exact sequence
\[
        0\longrightarrow T
        \xrightarrow{p^r}
        T
        \longrightarrow T_r
        \longrightarrow0.
\]
Since $H^3(G_{\{p\}},T)=0$, the corresponding long exact sequence
contains
\[
 H^2(G_{\{p\}},T)
 \xrightarrow{p^r}
 H^2(G_{\{p\}},T)
 \longrightarrow
 H^2(G_{\{p\}},T_r)
 \longrightarrow0.
\]
It follows that
\begin{equation}\label{eq:H2-quotient}
 H^2(G_{\{p\}},T_r)
 \simeq
 H^2(G_{\{p\}},T)/
 p^rH^2(G_{\{p\}},T).
\end{equation}

The reduction map $T_r\to T_{r-1}$ is compatible with the two short
exact sequences defining $T_r$ and $T_{r-1}$. Under the
identifications \eqref{eq:H2-quotient}, the induced map on
cohomology is therefore the natural quotient map
\[
 H^2(G_{\{p\}},T)/
 p^rH^2(G_{\{p\}},T)
 \longrightarrow
 H^2(G_{\{p\}},T)/
 p^{r-1}H^2(G_{\{p\}},T).
\]

By \eqref{eq:e-kills-H2}, multiplication by $p^e$ annihilates
$H^2(G_{\{p\}},T)$. If $r\ge e+1$, then both $p^r$ and
$p^{r-1}$ annihilate this group. Hence both the source and target of
the preceding quotient map identify canonically with
$H^2(G_{\{p\}},T)$, and under these identifications the transition
map is the identity. It is therefore an isomorphism.
\end{proof}

Fix from now on an integer
\begin{equation}\label{eq:R-choice}
        R\ge\max\{2,e+1\}.
\end{equation}
Thus
\[
 H^2(G_{\{p\}},T_R)\longrightarrow H^2(G_{\{p\}},T_{R-1})
\]
is injective, which is the only obstruction-theoretic property needed
in the relative tower.

We next construct the root-valued cohomology classes used at the
initial finite level.

\begin{lemma}\label{lem:odd-H1}
Let $d$ be odd and suppose
$H^2(G_{\{p\}},\Qp(d))=0$. Then
\[
        \dim_{\Qp}H^1(G_{\{p\}},\Qp(d))=1.
\]
Moreover $H^1(G_{\{p\}},\Zp(d))$ has $\Zp$-rank one and contains an
integral class mapping to a nonzero rational class.
\end{lemma}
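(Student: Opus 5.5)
The plan is to apply the global Euler--Poincar\'e characteristic formula quoted in the proof of Proposition~\ref{prop:finite-exceptional} (\cite[Lemma~9.7]{Kisin2003}) to the one-dimensional representation $V=\Qp(d)$. This yields $\dim H^1(G_{\{p\}},\Qp(d))=1$. The integral statement then follows from Lemma~\ref{lem:rational-rank}.

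First we compute the terms of the formula. For $d\neq0$, the character $\chi^d$ is nontrivial, indeed of infinite order, so $H^0(G_{\{p\}},\Qp(d))=0$. Since $d$ is odd, $d\neq 0$ holds automatically. Complex conjugation $c$ acts on $\Qp(d)$ by $\chi(c)^d=(-1)^d=-1$, hence $\dim V^{c=1}=0$, and the right-hand side of the formula equals $0-1=-1$. By hypothesis $H^2(G_{\{p\}},\Qp(d))=0$. The formula therefore reads $0-\dim H^1+0=-1$, which gives $\dim_{\Qp}H^1(G_{\{p\}},\Qp(d))=1$.

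For the integral assertions, Lemma~\ref{lem:rational-rank} with $i=1$ shows that $H^1(G_{\{p\}},\Zp(d))$ is finitely generated over $\Zp$ and that $H^1(G_{\{p\}},\Zp(d))\otimes_{\Zp}\Qp\simeq H^1(G_{\{p\}},\Qp(d))$. Consequently its $\Zp$-rank is $1$. Choose any element $c_0$ of $H^1(G_{\{p\}},\Zp(d))$ that is not torsion. Its image in $H^1(G_{\{p\}},\Zp(d))\otimes\Qp$ is nonzero, and under the canonical map this image corresponds to a nonzero rational class.

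The only point requiring care is that the Euler--Poincar\'e formula in the cited form applies to $G_{\{p\}}$, i.e.\ with $S=\{p,\infty\}$, and uses the archimedean term $\dim V^{c=1}$. This is exactly the form quoted earlier in the paper, so no further obstacle is expected. The other ingredient is the oddness of $d$, which makes $c$ act by $-1$ and is precisely what forces $H^1$ to be nonzero.
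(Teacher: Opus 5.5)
Your proposal is correct and follows the paper's proof essentially verbatim. The Euler--Poincar\'e formula, with $H^0=0$, complex conjugation acting by $-1$ and the hypothesis $H^2=0$, gives $\dim_{\Qp}H^1=1$. The degree-one case of Lemma~\ref{lem:rational-rank} then yields rank one and a non-torsion integral class with nonzero rational image.
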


\begin{proof}
Since $d\neq0$,
$H^0(G_{\{p\}},\Qp(d))=0$.  Complex conjugation acts on
$\Qp(d)$ by $(-1)^d=-1$.  The global Euler--Poincare characteristic
formula gives
\[
 \dim H^0-\dim H^1+\dim H^2=-1.
\]
The first and third terms vanish, and therefore $\dim H^1=1$.
The degree-one assertion of Lemma~\ref{lem:rational-rank} identifies
$H^1(G_{\{p\}},\Zp(d))\otimes\Qp$ with this one-dimensional rational
space.  The integral group therefore has rank one, and any element
mapping to a nonzero rational vector gives the required class.
\end{proof}

\begin{lemma}\label{lem:restriction}
If $d\neq0$, the restriction map
\[
 H^1(G_{\{p\}},\Qp(d))
 \longrightarrow H^1(H,\Qp(d))
\]
is injective.
\end{lemma}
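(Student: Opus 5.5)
The plan is to use the inflation–restriction exact sequence for $1\to H\to G_{\{p\}}\to\Gamma\to1$. With $V=\Qp(d)$, the kernel of restriction $H^1(G_{\{p\}},V)\to H^1(H,V)$ is the inflated group $H^1(\Gamma,V^H)$. Since $\chi|_H=1$, we have $V^H=V$, so the kernel is $H^1(\Gamma,\Qp(d))$. It therefore suffices to prove that this group vanishes for $d\neq0$.

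We first carry out the computation integrally and with torsion coefficients, and then pass to $\Qp$. For the compact module $\Zp(d)$ we use the decomposition $\Gamma=\Delta_p\times\Gamma_1$ as in the proof of Proposition~\ref{prop:cyclotomic-specialization}. Because $|\Delta_p|$ is prime to $p$, we get
\[
H^1(\Gamma,\Zp(d))=H^1(\Gamma_1,\Zp(d))^{\Delta_p}.
\]
For procyclic $\Gamma_1=\overline{\langle\gamma\rangle}$ and a finitely generated $\Zp$-module $M$, continuous cohomology gives $H^1(\Gamma_1,M)=M/(\gamma-1)M$. Here $\gamma$ acts on $\Zp(d)$ by $u^d$, so
\[
H^1(\Gamma_1,\Zp(d))=\Zp/(u^d-1)\Zp.
\]
Since $u$ has infinite order in $1+p\Zp$ and $d\neq0$, we have $u^d\neq1$. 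This quotient is therefore finite, and $H^1(\Gamma,\Zp(d))$ is torsion.

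To pass to rational coefficients we use the analogue of Lemma~\ref{lem:rational-rank} for $\Gamma$: continuous cohomology with $\Qp$ coefficients is $H^1(\Gamma,\Zp(d))\otimes\Qp$. This holds because $\Gamma$ is topologically finitely generated and $\Zp(d)$ is compact, so the cochain description commutes with inverting $p$. It is cleaner to argue directly: a continuous cocycle $c:\Gamma\to\Qp(d)$ has compact image, hence lands in $p^{-k}\Zp(d)$ for some $k$. Then $p^kc$ is an integral cocycle whose class is killed by a power of $p$, so $c$ is a coboundary in $\Qp(d)$. Alternatively, one can write down the coboundary explicitly. Restricting to $\Gamma_1$, any cocycle is determined by $c(\gamma)$, and $c(\gamma)=(u^d-1)x$ is solvable with $x\in\Qp$ because $u^d-1\neq0$. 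Averaging over $\Delta_p$, or using the fact that restriction to the finite-index subgroup $\Gamma_1$ is injective on rational $H^1$ by the corestriction argument, handles the finite part.

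With $H^1(\Gamma,\Qp(d))=0$ established, inflation–restriction gives the claimed injectivity. The only genuine input is the non-vanishing of $u^d-1$, i.e.\ that $\Gamma_1$ acts through a nontrivial character. I expect the main subtlety to be justifying the inflation–restriction sequence for continuous cohomology with $\Qp$ coefficients, which are not discrete. I will sidestep this by checking the exactness at $H^1$ directly. If a cocycle $c$ on $G_{\{p\}}$ restricts to a coboundary $h\mapsto(h-1)x=0$ on $H$, then $c|_H=0$ because $H$ acts trivially. The cocycle relation then shows that $c$ factors through $\Gamma$, which is elementary.
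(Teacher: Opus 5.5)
Your proposal is correct and follows essentially the paper's argument: inflation--restriction identifies the kernel with $H^1(\Gamma,\Qp(d))$, which vanishes because $|\Delta_p|$ is invertible and a generator of $\Gamma_1$ acts by $u^d\neq1$. The paper computes directly with $\Qp$ coefficients, getting $H^1(1+p\Zp,\Qp(d))=\Qp/(u^d-1)\Qp=0$, so your detour through $\Zp(d)$ and your direct check of exactness at $H^1$ are extra justification rather than a different route.
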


\begin{proof}
Inflation--restriction for
\[
 1\longrightarrow H\longrightarrow G_{\{p\}}
 \xrightarrow{\chi}\Zp^\times\longrightarrow1
\]
identifies the kernel with $H^1(\Zp^\times,\Qp(d))$.  Write
$\Zp^\times=\Delta_p\times(1+p\Zp)$.  Since $|\Delta_p|$ is invertible
in $\Qp$, its positive-degree cohomology vanishes.  A topological
generator of $1+p\Zp$ acts on $\Qp(d)$ by $u^d\neq1$, so
\[
 H^1(1+p\Zp,\Qp(d))
 =\Qp/(u^d-1)\Qp=0.
\]
Hence the kernel of restriction is zero.
\end{proof}

For each $\alpha\in\mathcal B$, the integer $d_\alpha$ is odd and
lies outside $\cE_p$.  Choose an integral cocycle
\begin{equation}\label{eq:z-alpha-cocycles}
        z_\alpha\in
        Z^1(G_{\{p\}},\Zp(d_\alpha))
\end{equation}
representing a nonzero rational class.  By
Lemma~\ref{lem:restriction}, its restriction to $H$ is nonzero.
Because $H$ acts trivially on the coefficient module, the
restriction is an ordinary continuous homomorphism $H\longrightarrow\Zp$. Its image is a nonzero closed subgroup of $\Zp$, hence equals
$p^{s_\alpha}\Zp$ for a unique $s_\alpha\ge0$.  Set
\begin{equation}\label{eq:s-choice}
        s=\max_{\alpha\in\mathcal B}s_\alpha.
\end{equation}

\section{The initial finite-level lift}\label{sec:initial}

We now construct a representation modulo a sufficiently large power of
$p$ whose image already contains the positive and negative simple-root
directions. Define
\begin{equation}\label{eq:F-cocycle}
        F(g)=\sum_{\alpha\in\mathcal B}z_\alpha(g)X_\alpha.
\end{equation}
Each summand lies in the root line
$\Zp(d_\alpha)X_\alpha$, so $F$ is a $1$-cocycle with values in
$T=\Ad\rho_\infty(\frakg_{\Zp})$.

Choose $v=1+p$ and choose $\sigma\in G_{\{p\}}$ with
$\chi(\sigma)=v$. For $\alpha\in\mathcal B$ put
\[
        \lambda_\alpha=v^{d_\alpha}\in\Zp^\times.
\]
By the choice of $\lambda$, the integers $d_\alpha$ for
$\alpha\in\mathcal B$ are pairwise distinct; because $v$ has infinite
order, the $p$-adic units $\lambda_\alpha$ are pairwise distinct.
Conjugation by $\rho_\infty(\sigma)$ acts on $X_\alpha$ as
multiplication by $\lambda_\alpha$.

There are finitely many pairs $\alpha\neq\beta$, so define
\begin{equation}\label{eq:m-separation}
 m=1+
 \max_{\substack{\alpha,\beta\in\mathcal B\\\alpha\neq\beta}}
 v_p(\lambda_\alpha-\lambda_\beta).
\end{equation}
Then
\begin{equation}\label{eq:lambda-separation}
 v_p(\lambda_\alpha-\lambda_\beta)<m
 \qquad(\alpha\neq\beta).
\end{equation}
If $A\ge m$, the eigenvalues remain pairwise distinct modulo $p^A$.
Set
\begin{equation}\label{eq:C-projector}
        C=m(|\mathcal B|-1).
\end{equation}

\begin{lemma}\label{lem:projector}
Let $A\ge m$ and let $M\subset T_A$ be a
$G_{\{p\}}$-stable $\Z/p^A\Z$-submodule. Suppose
\[
        x=\sum_{\beta\in\mathcal B}c_\beta X_\beta\in M
\]
is supported on the simple-root lines. Fix $\alpha\in\mathcal B$ and
suppose $c_\alpha\neq0$ with $v_p(c_\alpha)\le s_0$.  If
$s_0+C<A$, then
\[
        p^{s_0+C}X_\alpha\in M.
\]
\end{lemma}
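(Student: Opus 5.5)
The proof is a Vandermonde-style projection using the element $\sigma$. Since $M$ is $G_{\{p\}}$-stable, it is stable under $\Ad\rho_\infty(\sigma)$. This operator acts on each simple-root line $X_\beta$, $\beta\in\mathcal B$, by multiplication by $\lambda_\beta$ modulo $p^A$. So $M$ is stable under every polynomial in this operator with $\Z/p^A\Z$-coefficients.

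Fix $\alpha$ and consider the operator
\[
 P_\alpha=\prod_{\beta\in\mathcal B,\ \beta\neq\alpha}\bigl(\Ad\rho_\infty(\sigma)-\lambda_\beta\bigr).
\]
Apply $P_\alpha$ to $x$. Each factor $\Ad\rho_\infty(\sigma)-\lambda_\beta$ kills the $X_\beta$-component. It multiplies the $X_\gamma$-component, for $\gamma\ne\beta$, by $\lambda_\gamma-\lambda_\beta$. Hence
\[
 P_\alpha x = c_\alpha\prod_{\beta\neq\alpha}(\lambda_\alpha-\lambda_\beta)\,X_\alpha\in M .
\]
This is the only place the support hypothesis on $x$ is used: a component along another root or toral line would not be killed.

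Now estimate valuations. By \eqref{eq:lambda-separation}, each difference has $v_p(\lambda_\alpha-\lambda_\beta)\le m-1$. There are $|\mathcal B|-1$ factors, so
\[
 v_p\Bigl(c_\alpha\prod_{\beta\ne\alpha}(\lambda_\alpha-\lambda_\beta)\Bigr)\le s_0+(m-1)(|\mathcal B|-1)\le s_0+C .
\]
Here $c_\alpha$ is read as an element of $\Z/p^A\Z$ with a chosen lift, and $v_p(c_\alpha)\le s_0<A$ is meaningful because $c_\alpha\neq0$. Since $s_0+C<A$, the coefficient is nonzero in $\Z/p^A\Z$. Write it as $p^{k}w$ with $k\le s_0+C$ and $w\in(\Z/p^A\Z)^\times$. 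Because $M$ is a $\Z/p^A\Z$-submodule, multiplying by $w^{-1}$ and then by $p^{s_0+C-k}$ gives $p^{s_0+C}X_\alpha\in M$.

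The one point needing care is that the valuation bound holds modulo $p^A$. The differences $\lambda_\alpha-\lambda_\beta$ are honest $p$-adic numbers of valuation $<m\le A$, so their reductions have the same valuation. The total valuation stays below $A$ by the hypothesis $s_0+C<A$, so no truncation to zero occurs. Everything else is formal.
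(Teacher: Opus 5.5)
Your proposal is correct and is essentially the paper's proof: apply $\prod_{\beta\neq\alpha}(D-\lambda_\beta)$, where $D$ is the action of $\sigma$ on $T_A$, to isolate $c_\alpha\prod_{\beta\neq\alpha}(\lambda_\alpha-\lambda_\beta)X_\alpha$ in $M$. The separation bound $v_p(\lambda_\alpha-\lambda_\beta)<m$ together with $s_0+C<A$ shows this coefficient is $p^k w$ with $k\le s_0+C$ and $w$ a unit, and rescaling by $w^{-1}p^{s_0+C-k}$ gives $p^{s_0+C}X_\alpha\in M$.
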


\begin{proof}
Let
\[
        D=\Ad\rho_A(\sigma)
\]
and define
\[
        P_\alpha(X)=
        \prod_{\substack{\beta\in\mathcal B\\\beta\neq\alpha}}
        (X-\lambda_\beta).
\]
Because $M$ is stable under the Galois action, it is stable under $D$
and hence under every polynomial in $D$ with coefficients in
$\Z/p^A\Z$.  On the root vectors,
$D(X_\beta)=\lambda_\beta X_\beta$.  Therefore
$P_\alpha(D)X_\beta=0$ for $\beta\neq\alpha$, while
\[
 P_\alpha(D)X_\alpha
 =\prod_{\beta\neq\alpha}
   (\lambda_\alpha-\lambda_\beta)X_\alpha.
\]
It follows that
\[
 P_\alpha(D)x
 =c_\alpha
  \prod_{\beta\neq\alpha}
  (\lambda_\alpha-\lambda_\beta)X_\alpha.
\]
Write $t=v_p(c_\alpha)$ and
\[
 r_\alpha=
 v_p\!\left(
  \prod_{\beta\neq\alpha}
  (\lambda_\alpha-\lambda_\beta)
 \right).
\]
There are $|\mathcal B|-1$ factors, each of valuation strictly less
than $m$, hence $r_\alpha<C$.  Consequently
$t+r_\alpha<s_0+C<A$, so the displayed coefficient is nonzero modulo
$p^A$ and may be written
$p^{t+r_\alpha}u_\alpha$ with $u_\alpha$ a unit.  Multiplying by
$u_\alpha^{-1}p^{s_0+C-t-r_\alpha}$ gives the desired element.
\end{proof}

Choose $A$ so large that
\begin{equation}\label{eq:A-choice}
        A\ge m,
        \qquad
        A>s+C+R+2.
\end{equation}
If $\dim Z(\GG)=1$, we also enlarge $A$ so that the reduction of the
nonzero central character $\lambda^{\mathrm{ab}}\circ\chi$ modulo
$p^A$ already has nontrivial pro-$p$ image. The ideal $p^A\Z/p^{2A}\Z$ is square-zero. Therefore smoothness gives
a canonical additive isomorphism
\begin{equation}\label{eq:kappa-A}
 \kappa_A:T_A
 \xrightarrow{\sim}
 \ker\!\left(\GG(\Z/p^{2A}\Z)
       \to\GG(\Z/p^A\Z)\right).
\end{equation}
We record explicitly how this square-zero identification interacts with
congruence depth.

\begin{lemma}\label{lem:kappa-depth}
Let $0\le t<A$ and let $X\in\frakg_{\Zp}$ have nonzero reduction
$\bar X\in\frakg_{\Fp}$.  Then
$\kappa_A(p^tX\bmod p^A)$ lies in the image of the congruence subgroup
$U_{A+t}(\GG)$ modulo $p^{2A}$, and its leading term at depth $A+t$
is $\bar X$.
\end{lemma}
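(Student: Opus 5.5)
The plan is to unwind the definition of $\kappa_A$ through the square-zero identification and compare it with the congruence filtration on $\GG(\Zp)$. By construction, $\kappa_A$ is the inverse of the canonical identification of $\ker(\GG(\Z/p^{2A}\Z)\to\GG(\Z/p^A\Z))$ with $\frakg\otimes I$, where $I=p^A\Z/p^{2A}\Z$. This identification comes from smoothness: an element of the kernel is a point $1+\varepsilon$ with $\varepsilon$ valued in $I$, and since $I^2=0$ its tangent component is well defined.

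The concrete way to see this is to fix a faithful closed embedding $\GG\hookrightarrow\GL_N$ over $\Zp$, compatible with the integral structure, as in the discussion of \eqref{eq:commutator-leading-general}. In this model the identification sends $Y\otimes i\in\frakg\otimes I$ to the matrix $1+iY\in\GG(\Z/p^{2A}\Z)$. Membership in $\GG$ holds because the defining equations are satisfied to first order, and second-order terms vanish because $I^2=0$. Moreover, $\frakg\otimes I$ is exactly the set of $Y\otimes i$ with $1+iY\in\GG$.

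Under $I\simeq\Z/p^A\Z$, dividing by $p^A$, the class $p^tX \bmod p^A$ corresponds to $p^{A+t}X\otimes 1$. Hence
\[
\kappa_A(p^tX\bmod p^A)=1+p^{A+t}X \bmod p^{2A}.
\]
It remains to show that this element is the reduction of some $x\in U_{A+t}(\GG)$ with $\ell_{A+t}(x)=\bar X$.

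\begin{enumerate}[label=\textup{(\alph*)}]
\item Since $A+t\ge A\ge 1$, the ideal $p^{A+t}\Z/p^{2(A+t)}\Z$ is square-zero. The same tangent-space reasoning therefore gives an element $y=1+p^{A+t}X\in\GG(\Z/p^{2(A+t)}\Z)$ lying in the kernel of reduction modulo $p^{A+t}$.
\item By smoothness of $\GG$ (Hensel's lemma), $y$ lifts to some $x\in\GG(\Zp)$. Its reduction modulo $p^{A+t}$ is trivial, so $x\in U_{A+t}(\GG)$.
\item Writing $x=1+p^{A+t}\widetilde X$, we have $\widetilde X\equiv X \pmod{p^{A+t}}$, so the leading term is $\ell_{A+t}(x)=\bar X$ by definition, using the identification \eqref{eq:congruence-quotient}.
\item Since $2A\le 2(A+t)$, reducing $x$ modulo $p^{2A}$ gives $1+p^{A+t}X$, which is $\kappa_A(p^tX\bmod p^A)$.
\end{enumerate}

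The only real obstacle is checking that these identifications are intrinsic rather than artifacts of the embedding. The inverse of $\kappa_A$ and the leading-term map are both the canonical tangent-space identifications for square-zero thickenings. Both are functorial under the reduction $\Z/p^{2(A+t)}\Z\to\Z/p^{2A}\Z$, and under the inclusion of the ideal $p^{A+t}\Z/p^{2(A+t)}\Z$ they are compatible with multiplication by $p^t$ on $I$. This functoriality is the same naturality already used in Lemma~\ref{lem:obstruction-naturality}, so the matrix computation above suffices.

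The hypothesis $t<A$ is used to guarantee $A+t<2A$. This ensures the element is nontrivial modulo $p^{2A}$ and that the depth $A+t$ is visible at this level, so the leading term $\bar X\neq0$ is detected.
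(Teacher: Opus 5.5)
Your proposal is correct and follows essentially the same route as the paper: fix a faithful embedding $\GG\hookrightarrow\GL_N$, identify $\kappa_A(p^tX\bmod p^A)$ with $1+p^{A+t}X\bmod p^{2A}$, and read off the leading term at depth $A+t$. Two of your steps make explicit what the paper leaves implicit: lifting via smoothness to an actual element of $U_{A+t}(\GG)$, and noting that $t<A$ makes the depth-$(A+t)$ leading term well defined modulo $p^{2A}$.
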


\begin{proof}
This is the functorial description of the tangent kernel for the
square-zero ideal $p^A\Z/p^{2A}\Z$.  After choosing a faithful
integral representation $\GG\hookrightarrow\GL_N$, the element
$\kappa_A(p^tX)$ has the form
\[
        1+p^{A+t}\widetilde X
        \pmod{p^{2A}},
\]
where $\widetilde X\bmod p=\bar X$.  Hence it is trivial modulo
$p^{A+t}$, nontrivial in general modulo $p^{A+t+1}$, and its image in
$U_{A+t}(\GG)/U_{A+t+1}(\GG)$ is exactly $\bar X$.  The conclusion
is independent of the chosen faithful representation because the
leading term is the intrinsic tangent vector.
\end{proof}

We define
\begin{equation}\label{eq:rho-2A}
 \rho_{2A}^*(g)
 =\kappa_A(F(g)\bmod p^A)\,\rho_{2A}(g).
\end{equation}
This formula is the intrinsic group-scheme replacement for
$(1+p^AF(g))\rho_\infty(g)$ in the matrix case.

\begin{lemma}\label{lem:initial-twist-rep}
The map
\[
        \rho_{2A}^*:G_{\{p\}}
        \longrightarrow\GG(\Z/p^{2A}\Z)
\]
is a continuous representation and its reduction modulo $p^A$ is
$\rho_A$.
\end{lemma}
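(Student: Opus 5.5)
The plan is a direct verification that $\rho_{2A}^*$ is multiplicative, using only three facts: the cocycle identity for $F$, the additivity of $\kappa_A$, and the description of conjugation on the square-zero kernel. Continuity and the reduction statement come first, because they are immediate. $F$ is continuous with values in $T$, so $g\mapsto F(g)\bmod p^A$ is locally constant. Since $\kappa_A$ is a map between finite sets and $\rho_{2A}$ is continuous, $\rho_{2A}^*$ is continuous. The factor $\kappa_A(F(g)\bmod p^A)$ lies in the kernel of reduction $\GG(\Z/p^{2A}\Z)\to\GG(\Z/p^A\Z)$. Hence $\rho_{2A}^*(g)\bmod p^A=\rho_{2A}(g)\bmod p^A=\rho_A(g)$.

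For the homomorphism property, write $\bar F(g)=F(g)\bmod p^A\in T_A$ and compute
\[
\rho_{2A}^*(g)\rho_{2A}^*(h)=\kappa_A(\bar F(g))\,\rho_{2A}(g)\kappa_A(\bar F(h))\rho_{2A}(g)^{-1}\,\rho_{2A}(g)\rho_{2A}(h).
\]
By the discussion preceding Lemma~\ref{lem:obstruction}, conjugation by the element $\rho_{2A}(g)$, which is a lift of $\rho_A(g)$, acts on the kernel through the adjoint action of $\rho_A(g)$. By Lemma~\ref{lem:window-coeff}, or directly from the case $I=p^A\Z/p^{2A}\Z$, this is the $G_{\{p\}}$-module structure of $T_A$. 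So
\[
\rho_{2A}(g)\kappa_A(\bar F(h))\rho_{2A}(g)^{-1}=\kappa_A(g\cdot\bar F(h)).
\]
Additivity of $\kappa_A$ then gives $\kappa_A(\bar F(g)+g\cdot\bar F(h))\,\rho_{2A}(gh)$, and the cocycle relation $F(gh)=F(g)+gF(h)$ turns this into $\rho_{2A}^*(gh)$. Here $F$ is a cocycle because each $z_\alpha$ is a cocycle with values in the root line $\Zp(d_\alpha)X_\alpha$, and this line is precisely the corresponding summand of $T$ by \eqref{eq:T-decomposition}.

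The only step needing care is the identification of the conjugation action on $\ker(\GG(\Z/p^{2A}\Z)\to\GG(\Z/p^A\Z))$ with the module $T_A=\Ad\rho_\infty(\frakg_{\Zp})/p^A$. This is exactly what makes the map $\kappa_A$ in \eqref{eq:kappa-A} $G_{\{p\}}$-equivariant. I would justify it as in Lemma~\ref{lem:window-coeff}: after a faithful embedding, $\kappa_A(Y)=1+p^A\widetilde Y$, and conjugation gives $1+p^A\Ad(\rho_{2A}(g))\widetilde Y$, which depends only on $\rho_A(g)$ and reduces to the $\Ad\rho_\infty$-action modulo $p^A$. I expect no genuine obstacle. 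The statement is the group-scheme version of the standard fact that $(1+p^AF)\rho$ is a representation when $F$ is a cocycle and $p^{2A}=0$, and the proof is purely formal.
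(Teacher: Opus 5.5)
Your proposal is correct and matches the paper's proof. Continuity and the reduction to $\rho_A$ hold because $\kappa_A$ takes values in the kernel of reduction to level $A$. Multiplicativity comes from moving $\kappa_A(\bar F(h))$ past $\rho_{2A}(g)$ via the adjoint action of $\rho_A(g)$, then using additivity of $\kappa_A$ and the cocycle identity for $F$. One small point: Lemma~\ref{lem:window-coeff} as stated requires $j\ge R$, which does not literally cover $I=p^A\Z/p^{2A}\Z$, so your direct faithful-embedding justification is the one to rely on.
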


\begin{proof}
Continuity is immediate.  The cocycle relation for $F$ is
\[
 F(gh)=F(g)+\Ad(\rho_\infty(g))F(h).
\]
Modulo $p^A$ the same identity holds in $T_A$.  The multiplication law
in the square-zero kernel and the conjugation formula give
\begin{align*}
 \rho_{2A}^*(g)\rho_{2A}^*(h)
 &=\kappa_A(F(g))\rho_{2A}(g)
   \kappa_A(F(h))\rho_{2A}(h)\\
 &=\kappa_A\!\left(
   F(g)+\Ad(\rho_A(g))F(h)
   \right)\rho_{2A}(gh)\\
 &=\kappa_A(F(gh))\rho_{2A}(gh)\\
 &=\rho_{2A}^*(gh).
\end{align*}
No quadratic error remains because the kernel is square-zero.  Finally
$\kappa_A(F(g))$ lies in the kernel of reduction to level $A$, so the
reduction of $\rho_{2A}^*$ is $\rho_A$.
\end{proof}

Put $K_A=\ker\rho_A$. Since $\rho_{2A}^*$ reduces to $\rho_A$, its
image on $K_A$ lies in the square-zero congruence kernel.  Hence there
is a unique additive subgroup $M_A\subset T_A$ such that
\begin{equation}\label{eq:def-MA}
        \rho_{2A}^*(K_A)=\kappa_A(M_A).
\end{equation}
An additive subgroup of the finite $\Z/p^A\Z$-module $T_A$ is stable
under integer multiplication and is therefore a
$\Z/p^A\Z$-submodule.

\begin{lemma}\label{lem:M-stable}
The submodule $M_A\subset T_A$ is stable under the adjoint action of
$G_{\{p\}}$ through $\rho_A$.
\end{lemma}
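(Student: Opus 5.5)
The plan is a direct conjugation computation inside $\GG(\Z/p^{2A}\Z)$, using only that $K_A=\ker\rho_A$ is normal in $G_{\{p\}}$ and the conjugation rule for the square-zero kernel from Section~\ref{sec:finite-deformation}. Fix $x\in M_A$ and $g\in G_{\{p\}}$. By \eqref{eq:def-MA} choose $k\in K_A$ with $\rho_{2A}^*(k)=\kappa_A(x)$. We must show that $\Ad(\rho_A(g))x\in M_A$.

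\textbf{Step 1 (normality).} Since $K_A$ is normal, $gkg^{-1}\in K_A$. Because $\rho_{2A}^*$ is a homomorphism by Lemma~\ref{lem:initial-twist-rep},
\[
\rho_{2A}^*(gkg^{-1})=\rho_{2A}^*(g)\,\kappa_A(x)\,\rho_{2A}^*(g)^{-1},
\]
and this element lies in $\rho_{2A}^*(K_A)=\kappa_A(M_A)$.

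\textbf{Step 2 (conjugation rule).} The element $\rho_{2A}^*(g)\in\GG(\Z/p^{2A}\Z)$ is a lift of $\rho_{2A}^*(g)\bmod p^A=\rho_A(g)$. Conjugation by any lift acts on the square-zero kernel through the adjoint action of the reduction, independently of the lift. Hence
\[
\rho_{2A}^*(g)\,\kappa_A(x)\,\rho_{2A}^*(g)^{-1}=\kappa_A\bigl(\Ad(\rho_A(g))x\bigr),
\]
where the adjoint action on $T_A$ is the one identified via Lemma~\ref{lem:window-coeff}-style bookkeeping with $I=p^A\Z/p^{2A}\Z\simeq\Z/p^A\Z$.

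\textbf{Step 3 (conclusion).} Since $\kappa_A$ is injective, $\Ad(\rho_A(g))x\in M_A$. Together with the remark that $M_A$ is a $\Z/p^A\Z$-submodule, this proves the lemma.

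The only point needing care is Step 2: one must make sure that the identification $\kappa_A$ of \eqref{eq:kappa-A} intertwines conjugation with the adjoint action of $T_A=\Ad\rho_\infty(\frakg_{\Zp})/p^A$. This is the same computation already used in the proof of Lemma~\ref{lem:initial-twist-rep}, since $\rho_{2A}(g)$ and $\rho_{2A}^*(g)$ differ by a kernel element that commutes with $\kappa_A(x)$ (the kernel is abelian). So no genuine obstacle is expected.
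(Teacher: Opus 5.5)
Your proposal is correct and matches the paper's proof essentially verbatim. Normality of $K_A$ places $\rho_{2A}^*(gkg^{-1})$ in $\kappa_A(M_A)$, and since conjugation on the square-zero kernel depends only on the reduction $\rho_A(g)$, this element equals $\kappa_A(\Ad(\rho_A(g))x)$. Your extra remark that $\rho_{2A}^*(g)$ and $\rho_{2A}(g)$ differ by an element of the abelian kernel is a valid, if unnecessary, check on that conjugation rule.
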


\begin{proof}
Take $X\in M_A$ and choose $h\in K_A$ with
$\rho_{2A}^*(h)=\kappa_A(X)$. For $g\in G_{\{p\}}$, normality of
$K_A$ gives $ghg^{-1}\in K_A$. Thus
$\rho_{2A}^*(ghg^{-1})=\kappa_A(Y)$ for some $Y\in M_A$. On the other
hand,
\[
 \rho_{2A}^*(ghg^{-1})
 =\rho_{2A}^*(g)\kappa_A(X)\rho_{2A}^*(g)^{-1}.
\]
Conjugation on the square-zero kernel depends only on the reduction of
$\rho_{2A}^*(g)$ modulo $p^A$, which is $\rho_A(g)$. Hence
\[
        Y=\Ad(\rho_A(g))X.
\]
Therefore $M_A$ is Galois-stable.
\end{proof}

We can now isolate every positive and negative simple root.

\begin{proposition}\label{prop:initial-roots}
Put
\begin{equation}\label{eq:q-root-depth}
        q=A+s+C.
\end{equation}
For every $\alpha\in\mathcal B$, the image of $\rho_{2A}^*$ contains
an element lying in the $q$-th congruence subgroup whose leading term
at level $q$ is $X_\alpha$.  Moreover
\begin{equation}\label{eq:q-below-window}
        q+1\le2A-R.
\end{equation}
\end{proposition}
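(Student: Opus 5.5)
The plan is to work entirely inside the submodule $M_A\subset T_A$ defined in \eqref{eq:def-MA}. By Lemma~\ref{lem:M-stable} it is a $G_{\{p\}}$-stable $\Z/p^A\Z$-submodule, so Lemma~\ref{lem:projector} applies to it once we find an element of $M_A$ supported on the simple-root lines with controlled coefficients. The natural source of such an element is $K_A\cap H$. Since $H$ is the kernel of $\chi$, we have $\rho_\infty(h)=1$ for $h\in H$, so $H\subset K_A$. For $h\in H$ we get $\rho_{2A}^*(h)=\kappa_A(F(h)\bmod p^A)$, and therefore $F(h)\bmod p^A\in M_A$. Moreover, on $H$ each $z_\alpha$ is a homomorphism $H\to\Zp$ with image $p^{s_\alpha}\Zp$.

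Fix $\alpha\in\mathcal B$ and choose $h\in H$ with $v_p(z_\alpha(h))=s_\alpha\le s$. Then
\[x=F(h)\bmod p^A=\sum_{\beta\in\mathcal B}z_\beta(h)X_\beta\in M_A\]
is supported on $\mathcal B$, and its $\alpha$-coefficient has valuation at most $s$. Because $s+C<A$ by \eqref{eq:A-choice}, this coefficient is nonzero modulo $p^A$. The other coefficients need no control, since the projector $P_\alpha(D)$ in Lemma~\ref{lem:projector} kills them. With $s_0=s$, that lemma gives $p^{s+C}X_\alpha\in M_A$. Hence there is $k\in K_A$ with $\rho_{2A}^*(k)=\kappa_A(p^{s+C}X_\alpha\bmod p^A)$.

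Lemma~\ref{lem:kappa-depth}, applied with $t=s+C<A$ and $X=X_\alpha$ (whose reduction is nonzero because it is part of a basis of the root decomposition), shows that this element lies in the image of $U_{A+s+C}(\GG)=U_q(\GG)$ modulo $p^{2A}$, with leading term $X_\alpha$ at depth $q$. For this to be meaningful inside $\GG(\Z/p^{2A}\Z)$, the depth $q$ and the next level $q+1$ must stay below $2A$. This is exactly the inequality \eqref{eq:q-below-window}: $q+1=A+s+C+1\le 2A-R$ is equivalent to $A\ge s+C+R+1$, which follows from $A>s+C+R+2$ in \eqref{eq:A-choice}.

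I expect the main obstacle to be only bookkeeping. We must check that each $z_\alpha$ restricted to $H$ is a genuine homomorphism, so that $F|_H$ is additive, and that a single $h$ works for the chosen $\alpha$. Both follow because $H$ acts trivially on every $\Zp(d_\beta)$. We must also make sure the valuation bound in Lemma~\ref{lem:projector} is applied with $s_0=s$ rather than $s_\alpha$, so that the resulting depth $q$ is uniform in $\alpha$.
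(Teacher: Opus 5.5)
Your proposal is correct and follows the paper's proof essentially step for step. You use $H\subset K_A$ to place $F(h)\bmod p^A$ in the Galois-stable module $M_A$, isolate $p^{s+C}X_\alpha$ with Lemma~\ref{lem:projector}, convert this via Lemma~\ref{lem:kappa-depth} into a depth-$q$ element with leading term $X_\alpha$, and read off $q+1\le 2A-R$ from \eqref{eq:A-choice}. The only cosmetic difference is that you apply Lemma~\ref{lem:projector} directly with $s_0=s$, which is legitimate since the lemma only requires $v_p(c_\alpha)\le s_0$; the paper instead takes $s_0=s_\alpha$ and then multiplies by $p^{s-s_\alpha}$.
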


\begin{proof}
Fix $\alpha\in\mathcal B$. By definition of $s_\alpha$ choose
$h_\alpha\in H$ with
\[
        z_\alpha(h_\alpha)=p^{s_\alpha}u_\alpha,
        \qquad u_\alpha\in\Zp^\times.
\]
Since $\chi(h_\alpha)=1$, one has
$\rho_\infty(h_\alpha)=1$, so $H\subset K_A$. Formula
\eqref{eq:rho-2A} then becomes
\[
        \rho_{2A}^*(h_\alpha)
        =\kappa_A(F(h_\alpha)\bmod p^A).
\]
Thus $F(h_\alpha)\bmod p^A\in M_A$.  The vector
$F(h_\alpha)$ is supported on the simple-root lines and its
$X_\alpha$-coefficient has valuation exactly $s_\alpha$.  Since $M_A$
is Galois-stable, Lemma~\ref{lem:projector} applies and gives
\[
        p^{s_\alpha+C}X_\alpha\in M_A.
\]
Multiplying by $p^{s-s_\alpha}$ gives
$p^{s+C}X_\alpha\in M_A$.  Under the identification of the
square-zero kernel, Lemma~\ref{lem:kappa-depth} says that the
finite-level image contains an element whose first nonzero congruence
term occurs at depth $A+s+C=q$ and equals $X_\alpha$.

Finally, $A>s+C+R+2$ implies
\[
 q+1=A+s+C+1\le2A-R.
\]
The root data therefore lie strictly below the lower truncation at
which the relative lifting tower will begin.
\end{proof}

\section{Proof of the main theorem}\label{sec:main-proof}

We now lift the finite-level representation of the preceding
section to characteristic zero.

Recall that $R$ was chosen so that
\[
 H^2(G_{\{p\}},T_R)\longrightarrow H^2(G_{\{p\}},T_{R-1})
\]
is injective. Set
\begin{equation}\label{eq:j0}
        j_0=2A-R.
\end{equation}
The choice of $A$ gives $j_0>A>R$.  Moreover
$\rho_{2A}^*$ reduces modulo $p^A$ to $\rho_A$, and hence modulo $p^R$
to $\rho_R$.

\begin{proposition}\label{prop:tower}
For every $j\ge j_0$ there exist continuous representations
\[
 \tau_j:G_{\{p\}}\longrightarrow\GG(\Z/p^j\Z)
\]
and
\[
 \varrho_{j+R}:G_{\{p\}}
 \longrightarrow\GG(\Z/p^{j+R}\Z)
\]
such that
\begin{enumerate}[label=\textup{(\arabic*)}]
\item $\varrho_{j+R}\pmod{p^j}=\tau_j$;
\item $\tau_{j+1}\pmod{p^j}=\tau_j$;
\item $\varrho_{j+R}\pmod{p^R}=\rho_R$;
\item $\tau_{j_0}=\rho_{2A}^*\pmod{p^{j_0}}$.
\end{enumerate}
\end{proposition}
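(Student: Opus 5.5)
We build the two families together by induction on $j\ge j_0$, with Proposition~\ref{prop:relative-lifting} as the engine. At each step $j$ we require that $\tau_j$ and $\varrho_{j+R}$ are defined and satisfy (1)--(4) for all indices up to $j$; note that (3) together with (1) forces $\tau_j\bmod p^R=\rho_R$ (since $j>R$).

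\emph{Base case $j=j_0$.} Set $\varrho_{j_0+R}:=\rho_{2A}^*$, which makes sense because $j_0+R=2A$. Then set $\tau_{j_0}:=\rho_{2A}^*\bmod p^{j_0}$. Condition (4) holds by definition, (1) is immediate, and (3) holds because $\rho_{2A}^*\bmod p^A=\rho_A$ by Lemma~\ref{lem:initial-twist-rep} and $R\le A$. Condition (2) is vacuous at this stage.

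\emph{Inductive step.} Suppose $\tau_j$ and $\varrho_{j+R}$ are constructed for some $j\ge j_0$. Since $j\ge j_0>R$, Proposition~\ref{prop:relative-lifting} applies to $\widetilde\rho_{j+R}:=\varrho_{j+R}$. It uses two hypotheses: injectivity of $H^2(G_{\{p\}},T_R)\to H^2(G_{\{p\}},T_{R-1})$, which holds by the choice \eqref{eq:R-choice} and Proposition~\ref{prop:H2-stability}, and the fact that $\varrho_{j+R}\bmod p^R=\rho_R$, which is condition (3). Let $\tau_{j+1}:=\varrho_{j+R}\bmod p^{j+1}$, which is well defined since $R\ge1$. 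The proposition then produces a representation $\varrho_{j+1+R}:G_{\{p\}}\to\GG(\Z/p^{j+R+1}\Z)$ lifting $\tau_{j+1}$, so (1) holds at $j+1$. Condition (2) follows from
\[
\tau_{j+1}\bmod p^j=\varrho_{j+R}\bmod p^j=\tau_j,
\]
using (1) at $j$. For (3) at $j+1$, reduce through $\tau_{j+1}$: since $j+1>R$,
\[
\varrho_{j+1+R}\bmod p^R=\tau_{j+1}\bmod p^R=\varrho_{j+R}\bmod p^R=\rho_R.
\]
Continuity is automatic at every finite level.

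\emph{Expected obstacle.} Nothing here is genuinely hard once Proposition~\ref{prop:relative-lifting} is available. The step to handle carefully is the index bookkeeping. The new $\varrho_{j+1+R}$ need not be compatible with the old $\varrho_{j+R}$ (see Remark~\ref{rem:relative-not-compatible}). What is compatible is the family $\tau_j$, because each $\tau_{j+1}$ is taken as a reduction of $\varrho_{j+R}$ at a level $j+1$ strictly below $j+R$; this uses $R\ge2$. One must also check that the base level $2A=j_0+R$ matches exactly, and that $j_0\ge R$, as required in Proposition~\ref{prop:relative-lifting} and Lemma~\ref{lem:window-coeff}; this follows from $A>R+2$ in \eqref{eq:A-choice}.
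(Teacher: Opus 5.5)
Your proof is correct and is the paper's argument: set $\varrho_{j_0+R}=\rho_{2A}^*$ (using $j_0+R=2A$), then inductively let $\tau_{j+1}$ be the reduction of $\varrho_{j+R}$ modulo $p^{j+1}$, apply Proposition~\ref{prop:relative-lifting} to obtain $\varrho_{j+R+1}$, and read off (1)--(3) by reduction. One small quibble in your closing remark: the compatibility $\tau_{j+1}\bmod p^j=\tau_j$ needs only $R\ge1$, and $R\ge2$ is actually used inside Proposition~\ref{prop:relative-lifting}, where $T_{R-1}$ must be a genuine intermediate comparison module.
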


\begin{proof}
At the initial stage put
$\varrho_{j_0+R}=\rho_{2A}^*$ and define $\tau_{j_0}$ by reduction.
Since $j_0+R=2A$, properties (1), (3), and (4) hold.

Suppose now that $\tau_j$ and $\varrho_{j+R}$ have been constructed.
Define $\tau_{j+1}$ to be the reduction of $\varrho_{j+R}$ modulo
$p^{j+1}$. Then $\tau_{j+1}\pmod{p^j}=\tau_j$ and
$\tau_{j+1}\pmod{p^R}=\rho_R$.  Proposition~\ref{prop:relative-lifting}
applies with the fixed width $R$ and gives a representation
\[
 \varrho_{j+R+1}:G_{\{p\}}
 \longrightarrow\GG(\Z/p^{j+R+1}\Z)
\]
whose reduction modulo $p^{j+1}$ is $\tau_{j+1}$.  Reducing further
modulo $p^R$ gives $\rho_R$.  This completes the induction.
\end{proof}

The high-level representations $\varrho_{j+R}$ are auxiliary and need
not be compatible.  The compatibility is only in the lower
truncations $\tau_j$.

\begin{corollary}\label{cor:limit}
There exists a continuous representation
\[
        \rho:G_{\{p\}}\longrightarrow\GG(\Zp)
\]
whose reduction modulo $p^j$ is $\tau_j$ for every $j\ge j_0$.
\end{corollary}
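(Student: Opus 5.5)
The plan is to assemble $\rho$ as an inverse limit of the compatible system $(\tau_j)_{j\ge j_0}$ produced by Proposition~\ref{prop:tower}. Property~(2) of that proposition says that $\tau_{j+1}\bmod p^j=\tau_j$ for all $j\ge j_0$. Hence, for each $g\in G_{\{p\}}$, the elements $\tau_j(g)\in\GG(\Z/p^j\Z)$ form a compatible family under the reduction maps. Since $\GG$ is an affine group scheme of finite type over $\Z$, we have
\[
 \GG(\Zp)=\varprojlim_j\GG(\Z/p^j\Z).
\]
Indeed, $\GG(\Zp)=\Hom(\mathcal O(\GG),\Zp)$, and $\Zp=\varprojlim\Z/p^j\Z$, so a ring map into $\Zp$ is the same thing as a compatible family of maps into the finite quotients. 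The indexing starts at $j_0$ rather than $1$, but this is cofinal. For $j<j_0$ we simply define $\tau_j$ as the reduction of $\tau_{j_0}$, and the limit is unchanged.

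Define $\rho(g)$ to be the element of $\GG(\Zp)$ corresponding to the family $(\tau_j(g))_j$. Then $\rho$ is a group homomorphism, because each $\tau_j$ is one and the identification above is a group isomorphism. By construction, $\rho\bmod p^j=\tau_j$ for every $j\ge j_0$.

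For continuity, I will give $\GG(\Zp)$ the $p$-adic topology. This is the inverse-limit topology, in which the kernels $U_j(\GG)$ of reduction modulo $p^j$ form a basis of open neighbourhoods of $1$. The preimage $\rho^{-1}(U_j(\GG))$ equals $\ker\tau_j$, and this is open because $\tau_j$ is a continuous map to a finite discrete group. So $\rho$ is continuous at the identity, and hence, being a homomorphism, continuous everywhere. Equivalently, $\rho$ is the limit of continuous maps into the finite stages, and a map into an inverse limit is continuous exactly when all of its projections are.

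There is no genuine obstacle in this argument. The only point that needs care is that the high-level auxiliary representations $\varrho_{j+R}$ are not compatible with one another, so they must not be used in the limit. Only the truncations $\tau_j$ enter, and their compatibility is exactly property~(2) of Proposition~\ref{prop:tower}. I also note that property~(4) of that proposition gives $\rho\bmod p^{j_0}=\rho_{2A}^*\bmod p^{j_0}$, which is what later steps will use. The root elements of Proposition~\ref{prop:initial-roots} have depth $q<j_0$, so they are seen by this truncation.
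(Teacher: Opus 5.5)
Your proposal is correct and follows the paper's own argument. The compatible truncations $\tau_j$ from Proposition~\ref{prop:tower}(2) define a homomorphism into $\varprojlim_j\GG(\Z/p^j\Z)=\GG(\Zp)$, and $\rho$ is continuous because each $\tau_j$ is continuous into a finite discrete group. You spell out details the paper leaves implicit, namely cofinality of $j\ge j_0$ and $\rho^{-1}(U_j(\GG))=\ker\tau_j$.
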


\begin{proof}
The compatible system in Proposition~\ref{prop:tower}(2) defines a
homomorphism to
$\varprojlim_j\GG(\Z/p^j\Z)=\GG(\Zp)$.  Continuity follows from the
continuity of the finite quotients.
\end{proof}

By Proposition~\ref{prop:initial-roots}, for every
$\alpha\in\mathcal B$ there is $g_\alpha\in G_{\{p\}}$ such that the
finite-level element $\rho_{2A}^*(g_\alpha)$ has leading term
$X_\alpha$ at depth $q$, and $q+1\le j_0$.

\begin{corollary}\label{cor:persistent-roots}
For every $\alpha\in\mathcal B$ there exists
$g_\alpha\in G_{\{p\}}$ such that
\[
        \rho(g_\alpha)\in U_q(\GG)
        \quad\text{and}\quad
        \ell_q(\rho(g_\alpha))=X_\alpha.
\]
\end{corollary}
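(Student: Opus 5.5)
The plan is to transport the depth-$q$ information about $\rho_{2A}^*(g_\alpha)$ to the limit $\rho$. The point is that $\rho$ agrees with $\rho_{2A}^*$ modulo $p^{j_0}$, and $j_0\ge q+1$. Fix $\alpha\in\mathcal B$ and take $g_\alpha$ as produced by Proposition~\ref{prop:initial-roots}. In the proof of that proposition it was chosen as an element of $K_A$; more precisely, $\rho_{2A}^*(g_\alpha)=\kappa_A(p^{s+C}X_\alpha\bmod p^A)$, up to the harmless adjustment needed to realise an element of $M_A$.

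First, combine Corollary~\ref{cor:limit} with Proposition~\ref{prop:tower}(4). These give
\[
 \rho(g_\alpha)\bmod p^{j_0}=\tau_{j_0}(g_\alpha)=\rho_{2A}^*(g_\alpha)\bmod p^{j_0}.
\]
Next, apply Lemma~\ref{lem:kappa-depth} with $t=s+C<A$, which holds because $A>s+C+R+2$. It shows that $\rho_{2A}^*(g_\alpha)$ is trivial modulo $p^{q}$ and that its image in $U_q/U_{q+1}$ is $X_\alpha$. Since $q\le q+1\le j_0$ by \eqref{eq:q-below-window}, reducing modulo $p^{j_0}$ loses none of this information. It follows that $\rho(g_\alpha)\bmod p^{q}=1$, so $\rho(g_\alpha)\in U_q(\GG)$.

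Finally, $\ell_q(\rho(g_\alpha))$ depends only on $\rho(g_\alpha)\bmod p^{q+1}$. Since $q+1\le j_0$, this reduction equals $\rho_{2A}^*(g_\alpha)\bmod p^{q+1}$. The leading term is therefore $X_\alpha$, by the compatibility of the identification \eqref{eq:congruence-quotient} with reduction.

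The only delicate point is bookkeeping. We must be sure that the leading term at depth $q$ is read off from level $q+1$, which is at most $j_0$ and not $2A$, because the tower only preserves the truncations $\tau_j$ and not the auxiliary $\varrho_{j+R}$ (Remark~\ref{rem:relative-not-compatible}). The inequality $q+1\le 2A-R=j_0$ is exactly what makes this work.
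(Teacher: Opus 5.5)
Your proposal is correct and follows the paper's argument: $\rho\equiv\rho_{2A}^*\pmod{p^{j_0}}$ by Corollary~\ref{cor:limit} and Proposition~\ref{prop:tower}(4), and since $q+1\le j_0$ the depth-$q$ leading term $X_\alpha$ from Proposition~\ref{prop:initial-roots} survives in $\rho$. One small point: no ``adjustment'' is needed, because $p^{s+C}X_\alpha\in M_A$ and $\rho_{2A}^*(K_A)=\kappa_A(M_A)$ already give $g_\alpha\in K_A$ with $\rho_{2A}^*(g_\alpha)=\kappa_A(p^{s+C}X_\alpha\bmod p^A)$ exactly.
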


\begin{proof}
The final representation $\rho$ and $\rho_{2A}^*$ have the same
reduction modulo $p^{j_0}$.  Since $q+1\le j_0$, they have the same
reduction through the level at which the $q$-th leading term is
measured.  Hence each simple-root leading term constructed in
Proposition~\ref{prop:initial-roots} survives unchanged in $\rho$.
\end{proof}

Put $\mathcal H=\rho(G_{\{p\}})$ and $\mathcal D=\overline{[\mathcal H,\mathcal H]}$. Every commutator in $\GG(\Zp)$ lies in $\Gder(\Zp)$, so
$\mathcal D\subset\Gder(\Zp)$.

Let $\bar g$ generate $\Fp^\times$. Since the mod-$p$ cyclotomic
character is surjective, choose $\sigma_0\in G_{\{p\}}$ with
$\chibar(\sigma_0)=\bar g$ and put $t=\rho(\sigma_0)$.  The reduction
of $\rho$ modulo $p$ agrees with that of the toral representation
$\rho_\infty$, because $\rho\pmod{p^R}=\rho_R$.  Thus conjugation by
$t$ on the $\alpha$-root line modulo $p$ is multiplication by
$\bar g^{d_\alpha}$.

\begin{lemma}\label{lem:simple-roots-derived}
For every $\alpha\in\mathcal B$, $X_\alpha\in\Phi_q(\mathcal D)$.
\end{lemma}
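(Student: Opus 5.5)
We want to show that the element $x_\alpha:=\rho(g_\alpha)$ from Corollary~\ref{cor:persistent-roots} can be converted into an element of $\mathcal D$ with the same leading term $X_\alpha$ at depth $q$. The idea is to form a commutator with the torus-like element $t=\rho(\sigma_0)$. Put $y_\alpha=t\,x_\alpha\,t^{-1}x_\alpha^{-1}=[t,x_\alpha]\in\mathcal D$. This lies in $\mathcal D$ because both $t$ and $x_\alpha$ lie in $\mathcal H$.

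First we check that $y_\alpha\in U_q$. Since $U_q(\GG)$ is normal in $\GG(\Zp)$, we have $t x_\alpha t^{-1}\in U_q(\GG)$. The leading term of a conjugate is the adjoint action of the reduction of $t$ modulo $p$. By the remark preceding the lemma, $\rho\bmod p=\rho_\infty\bmod p$ is toral, and $t\bmod p$ acts on the $\alpha$-line by $\bar g^{d_\alpha}$. Hence
\[
\ell_q(t x_\alpha t^{-1})=\Ad(t\bmod p)X_\alpha=\bar g^{d_\alpha}X_\alpha .
\]
Leading terms are additive on $U_q/U_{q+1}$, so
\[
\ell_q(y_\alpha)=(\bar g^{d_\alpha}-1)X_\alpha .
\]

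Next we check that this coefficient is nonzero in $\Fp$. This is the one place where a hypothesis is genuinely needed, and I expect it to be the main point. We need $\bar g^{d_\alpha}\neq1$, that is, $(p-1)\nmid d_\alpha$. Since $p$ is odd, $p-1$ is even, while $d_\alpha$ is odd for every $\alpha\in\mathcal B$ (because $d_{-\alpha}=-d_\alpha$ and the simple-root weights are odd by Lemma~\ref{lem:weights}(1)). An even number cannot divide an odd one, so $\bar g^{d_\alpha}\ne 1$. Indeed, this is precisely why the odd-cocharacter hypothesis enters.

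Finally, $y_\alpha\in\mathcal D\cap U_q(\GG)$ and $\mathcal D\subset\Gder(\Zp)$, so $y_\alpha\in\mathcal D\cap U_q(\Gder)$. Its leading term is a nonzero scalar multiple of $X_\alpha$, which lies in $\frakg^{\mathrm{der}}_{\Fp}$. Because $\Phi_q(\mathcal D)$ is an $\Fp$-subspace, we may divide by the scalar and conclude $X_\alpha\in\Phi_q(\mathcal D)$. Alternatively, we can take a suitable integer power of $y_\alpha$ to get this directly.

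The only minor bookkeeping is the compatibility of the leading term computed in $U_q(\GG)$ with the one computed in $U_q(\Gder)$. This follows from smoothness of $\Gder$, since $\Lie(\Gder)\otimes\Fp\hookrightarrow\frakg_{\Fp}$.
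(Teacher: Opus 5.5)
Your proposal is correct and follows the paper's own argument: commute $x_\alpha=\rho(g_\alpha)$ with $t=\rho(\sigma_0)$, read off the leading term $(\bar g^{d_\alpha}-1)X_\alpha$ at depth $q$, use that $d_\alpha$ is odd while $p-1$ is even to see that this scalar is nonzero, and divide inside the $\Fp$-subspace $\Phi_q(\mathcal D)$. Your extra remarks simply make explicit two steps the paper leaves implicit: you compute the leading term through the conjugation action, since $t\notin U_1(\GG)$, and you compare leading terms in $U_q(\Gder)$ and $U_q(\GG)$.
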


\begin{proof}
Choose $x_\alpha=\rho(g_\alpha)$ as in
Corollary~\ref{cor:persistent-roots}.  The commutator
$[t,x_\alpha]$ belongs to $\mathcal D$ and lies in $U_q(\Gder)$.  Its
leading term is
\[
        (\bar g^{d_\alpha}-1)X_\alpha.
\]
For $\alpha\in\mathcal B$, the integer $d_\alpha$ is odd.  Since the
order $p-1$ of $\bar g$ is even, $p-1$ cannot divide $d_\alpha$.
Hence $\bar g^{d_\alpha}\neq1$, so the displayed scalar is nonzero in
$\Fp$.  Because $\Phi_q(\mathcal D)$ is an $\Fp$-subspace, division
by this nonzero scalar yields $X_\alpha\in\Phi_q(\mathcal D)$.
\end{proof}

This argument is worth noting: it uses only the parity of the
simple-root weights and therefore imposes no lower bound on $p$ beyond
oddness.

Let $h_{\max}$ be the maximum height of a positive root.  Starting from
$X_{\pm\alpha}$ for the simple roots, the Chevalley relations generate
every root vector.  We include the depth bookkeeping because it is
needed to place all directions in one common congruence layer.

\begin{lemma}\label{lem:all-root-vectors}
Let $\beta\in\Phi$. Then
\[
        X_\beta\in
        \Phi_{|\Ht(\beta)|q}(\mathcal D).
\]
\end{lemma}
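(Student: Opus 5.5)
We argue by induction on $|\Ht(\beta)|$, treating positive and negative roots symmetrically. The base case $|\Ht(\beta)|=1$ means $\beta\in\mathcal B=\Delta\cup(-\Delta)$, and this is exactly Lemma~\ref{lem:simple-roots-derived}: $X_\beta\in\Phi_q(\mathcal D)$.

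For the inductive step, let $\beta\in\Phi^+$ with $\Ht(\beta)=h\ge2$. We invoke the standard root-system fact that there is a simple root $\delta\in\Delta$ with $\beta-\delta\in\Phi^+$. This holds because $\beta$ is not simple, so $\langle\beta,\delta^\vee\rangle>0$ for some simple $\delta$. Then $\gamma:=\beta-\delta$ is a positive root of height $h-1$, and by induction $X_\gamma\in\Phi_{(h-1)q}(\mathcal D)$. The base case gives $X_\delta\in\Phi_q(\mathcal D)$. The bracket property \eqref{eq:Phi-basic-properties} then yields
\[
        [X_\gamma,X_\delta]=N_{\gamma,\delta}X_\beta\in\Phi_{hq}(\mathcal D).
\]
Since $\gamma,\delta,\gamma+\delta\in\Phi$, the constant $N_{\gamma,\delta}$ is a nonzero integer dividing $N_{\GG}$. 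The Lie-good hypothesis makes it nonzero modulo $p$. Because $\Phi_{hq}(\mathcal D)$ is an $\Fp$-subspace, we may divide by $N_{\gamma,\delta}$ to get $X_\beta\in\Phi_{hq}(\mathcal D)$. For negative roots $-\beta$ we run the identical argument with $-\gamma$ and $-\delta$, using $X_{-\delta}\in\Phi_q(\mathcal D)$ (again from Lemma~\ref{lem:simple-roots-derived}) and the constant $N_{-\gamma,-\delta}$, which is also nonzero modulo $p$.

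Two points need checking. First, the bracket property \eqref{eq:Phi-basic-properties} for $\mathcal D$ requires $\mathcal D$ to be closed under commutators. It is: $\mathcal D$ is a closed subgroup of $\Gder(\Zp)$, and by \eqref{eq:commutator-leading-general} the commutator of $d\in\mathcal D\cap U_r$ and $d'\in\mathcal D\cap U_s$ lies in $\mathcal D\cap U_{r+s}(\Gder)$ with leading term $[X,Y]$. Second, the leading terms here are computed in $\frakg_{\Fp}$ but must lie in $\frakg^{\mathrm{der}}_{\Fp}$; this holds because elements of $\Gder(\Zp)$ have leading terms in its Lie algebra. The only genuinely nontrivial input is the root-system lemma that every non-simple positive root can be reduced by some simple root. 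I expect that step to be the main point to justify, and I would cite it from Bourbaki or Humphreys rather than reprove it. I expect no difficulty with the depths, since they simply add: $(h-1)q+q=hq$.
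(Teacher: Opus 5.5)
Your proposal is correct and is essentially the paper's proof: you induct on $\Ht(\beta)$ and write a non-simple positive $\beta$ as $\gamma+\delta$ with $\delta\in\Delta$ and $\gamma\in\Phi^+$ of height one less. You then use $[\Phi_r(\mathcal D),\Phi_s(\mathcal D)]\subset\Phi_{r+s}(\mathcal D)$, the Chevalley relation, and the Lie-good hypothesis to divide by the structure constant, and you handle negative roots symmetrically via $-\Delta$. One wording nit: $\langle\beta,\delta^\vee\rangle>0$ for some $\delta\in\Delta$ holds for every positive root (since $(\beta,\beta)>0$), and it is the non-simplicity of $\beta$ that guarantees $\beta-\delta$ is a positive root rather than $0$.
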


\begin{proof}
We argue first for positive roots by induction on $h=\Ht(\beta)$.  If
$h=1$, the root is simple and the assertion is
Lemma~\ref{lem:simple-roots-derived}.  Suppose $h>1$.  There is a
simple root $\alpha\in\Delta$ such that
$\gamma=\beta-\alpha$ is a positive root of height $h-1$.  By
induction,
\[
        X_\alpha\in\Phi_q(\mathcal D),
        \quad\text{and}\quad
        X_\gamma\in\Phi_{(h-1)q}(\mathcal D).
\]
The commutator property gives
\[
 [X_\alpha,X_\gamma]
 \in\Phi_{hq}(\mathcal D).
\]
By the Chevalley relation,
\[
 [X_\alpha,X_\gamma]
 =N_{\alpha,\gamma}X_\beta.
\]
The Lie-good hypothesis says that
$N_{\alpha,\gamma}$ is nonzero modulo $p$.  Since
$\Phi_{hq}(\mathcal D)$ is an $\Fp$-space, division by this scalar
gives $X_\beta$ at the required level.  The negative roots are handled
in exactly the same way using the negative simple roots.
\end{proof}

For every simple root $\alpha$ we also have
\[
 [X_\alpha,X_{-\alpha}]=H_\alpha.
\]
Hence
\begin{equation}\label{eq:coroot-depth}
        H_\alpha\in\Phi_{2q}(\mathcal D)
        \quad \text{for }\quad \alpha\in\Delta.
\end{equation}
By the Lie-good hypothesis, the reductions of all root vectors and
these simple coroot directions span
$\frakg^{\mathrm{der}}_{\Fp}$.

Put
\begin{equation}\label{eq:r0}
        r_0=\max\{2,h_{\max}\}q.
\end{equation}
Repeated use of the $p$-power inclusion
$\Phi_r(\mathcal D)\subset\Phi_{r+1}(\mathcal D)$ moves every root
vector and every $H_\alpha$ from the level at which it was produced to
the common level $r_0$.  We have proved:

\begin{proposition}\label{prop:full-lie-layer}
One has
\[
        \Phi_{r_0}(\mathcal D)
        =\frakg^{\mathrm{der}}_{\Fp}.
\]
\end{proposition}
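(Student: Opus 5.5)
The plan is to assemble the pieces already established immediately before the statement; essentially only bookkeeping of congruence depths remains. We must show that $\Phi_{r_0}(\mathcal D)$ contains a spanning set of $\frakg^{\mathrm{der}}_{\Fp}$. The inclusion $\Phi_{r_0}(\mathcal D)\subseteq\frakg^{\mathrm{der}}_{\Fp}$ is by definition \eqref{eq:Phi-general}, since $\mathcal D\subset\Gder(\Zp)$. For the reverse inclusion, the Lie-good hypothesis says that the reductions of the $X_\beta$ ($\beta\in\Phi$) together with the $H_\delta$ ($\delta\in\Delta$) span $\frakg^{\mathrm{der}}_{\Fp}$. Since $\Phi_{r_0}(\mathcal D)$ is an $\Fp$-subspace, it therefore suffices to place each of these finitely many vectors in $\Phi_{r_0}(\mathcal D)$.

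For a root $\beta$, Lemma~\ref{lem:all-root-vectors} gives $X_\beta\in\Phi_{|\Ht(\beta)|q}(\mathcal D)$. Since $1\le|\Ht(\beta)|\le h_{\max}$, we have $|\Ht(\beta)|q\le r_0$. For a simple root $\delta$, \eqref{eq:coroot-depth} gives $H_\delta\in\Phi_{2q}(\mathcal D)$, and $2q\le r_0$. In each case the vector lies in $\Phi_r(\mathcal D)$ for some $r\le r_0$.

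It then remains to iterate the second inclusion of \eqref{eq:Phi-basic-properties}, namely $\Phi_r(\mathcal D)\subset\Phi_{r+1}(\mathcal D)$, exactly $r_0-r$ times. This inclusion comes from \eqref{eq:p-power-leading-general} (which uses $p$ odd) together with the fact that $\mathcal D$ is a closed subgroup, so that $d^p\in\mathcal D\cap U_{r+1}(\Gder)$. The iteration moves every generator into $\Phi_{r_0}(\mathcal D)$. Spanning then gives equality.

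The only point requiring care is that the elements used in the lemmas really lie in $\mathcal D\cap U_r(\Gder)$ rather than merely in $U_r(\GG)$. The commutator $[t,x_\alpha]$ and all iterated commutators lie in $\mathcal D\subset\Gder(\Zp)$, and $U_r(\Gder)=U_r(\GG)\cap\Gder(\Zp)$. We also need leading terms computed in $\frakg^{\mathrm{der}}_{\Fp}$ to agree with those in $\frakg_{\Fp}$; this holds because $\Gder\hookrightarrow\GG$ is a closed immersion of smooth group schemes, so $\frakg^{\mathrm{der}}_{\Fp}\hookrightarrow\frakg_{\Fp}$ compatibly with the identifications \eqref{eq:congruence-quotient}. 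I expect this compatibility to be the only (minor) obstacle, and I would verify it by choosing a faithful representation of $\GG$ and restricting it to $\Gder$.
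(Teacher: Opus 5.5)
Your argument is correct and is exactly the paper's: each $X_\beta$ sits at level $|\Ht(\beta)|q\le h_{\max}q$ and each $H_\delta$ at level $2q$. You push everything up to $r_0=\max\{2,h_{\max}\}q$ via $\Phi_r(\mathcal D)\subset\Phi_{r+1}(\mathcal D)$ and conclude with the spanning half of the Lie-good hypothesis. Your remark that leading terms for $\Gder\subset\GG$ are compatible is a detail the paper leaves implicit; note also that closedness of $\mathcal D$ is not needed to get $d^p\in\mathcal D$, since any subgroup is closed under powers.
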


\begin{lemma}\label{lem:successive-lifting}
Let $D\subset\Gder(\Zp)$ be a closed subgroup. Suppose that for some
$r\ge1$ the natural map
\[
        D\cap U_r(\Gder)
        \longrightarrow
        U_r(\Gder)/U_{r+1}(\Gder)
        \simeq\frakg^{\mathrm{der}}_{\Fp}
\]
is surjective. Then
\[
        U_r(\Gder)\subset D.
\]
\end{lemma}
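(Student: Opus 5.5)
The plan is the standard successive-approximation argument, driven by the $p$-power map \eqref{eq:p-power-leading-general}. First I show that the surjectivity hypothesis propagates to every level $k\ge r$: the map
\[
        D\cap U_k(\Gder)\longrightarrow U_k(\Gder)/U_{k+1}(\Gder)\simeq\frakg^{\mathrm{der}}_{\Fp}
\]
is surjective for all $k\ge r$. This is an induction on $k$. Given $X\in\frakg^{\mathrm{der}}_{\Fp}$, pick $d\in D\cap U_k(\Gder)$ with $\ell_k(d)=X$. By \eqref{eq:p-power-leading-general}, applied inside $\Gder$ (a smooth group scheme, so the same identifications of congruence quotients hold), $d^p\in D\cap U_{k+1}(\Gder)$ and $\ell_{k+1}(d^p)=X$. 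This is exactly the inclusion $\Phi_k(D)\subset\Phi_{k+1}(D)$ of \eqref{eq:Phi-basic-properties}, and here $p$ odd is used.

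Next, fix $y\in U_r(\Gder)$. I construct inductively elements $d_r,d_{r+1},\dots$ with $d_k\in D\cap U_k(\Gder)$ such that
\[
        y\,(d_rd_{r+1}\cdots d_k)^{-1}\in U_{k+1}(\Gder).
\]
Suppose $y_k:=y(d_r\cdots d_{k-1})^{-1}\in U_k(\Gder)$. By the first step, choose $d_k\in D\cap U_k(\Gder)$ with $\ell_k(d_k)=\ell_k(y_k)$. Since multiplication in $U_k/U_{k+1}$ corresponds to addition of leading terms, $y_kd_k^{-1}\in U_{k+1}(\Gder)$, and this is the inductive step. Then $e_k:=d_r\cdots d_k$ is a sequence in $D$ with $y e_k^{-1}\in U_{k+1}(\Gder)$. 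Hence $e_k\to y$ in the $p$-adic topology, because the $U_k(\Gder)$ form a basis of neighbourhoods of $1$. The product converges since $d_k\to1$. As $D$ is closed, $y\in D$.

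There is no genuine obstacle here. The only point needing care is that the leading-term calculus \eqref{eq:p-power-leading-general} and the identification \eqref{eq:congruence-quotient} are used for $\Gder$ rather than $\GG$. This is justified because $U_k(\Gder)=U_k(\GG)\cap\Gder(\Zp)$ and the leading term of an element of $U_k(\Gder)$ lies in $\frakg^{\mathrm{der}}_{\Fp}\subset\frakg_{\Fp}$, by smoothness of $\Gder$. Alternatively, one can simply repeat the matrix computation after a faithful embedding.
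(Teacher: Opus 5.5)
Your proof follows the paper's route: propagate surjectivity to every deeper layer using $x\mapsto x^p$ and \eqref{eq:p-power-leading-general}, then approximate $y$ by products of elements of $D$ and invoke closedness. There is, however, a small but genuine slip in the order of multiplication in the inductive step. With $e_{k-1}=d_r\cdots d_{k-1}$, what you actually prove is
\[
y_kd_k^{-1}=y\,e_{k-1}^{-1}d_k^{-1}=y\,(d_ke_{k-1})^{-1}\in U_{k+1}(\Gder).
\]
The next step, however, needs
\[
y\,(e_{k-1}d_k)^{-1}=y\,d_k^{-1}e_{k-1}^{-1}\in U_{k+1}(\Gder).
\]
These are different elements of a nonabelian group, so ``this is the inductive step'' does not follow as written.

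The paper avoids the issue by putting the new factor on the left, $P_{s+1}=d_sP_s$. Then $uP_{s+1}^{-1}=(uP_s^{-1})d_s^{-1}$ holds on the nose. Alternatively, you can keep your ordering and add one line. Put $w=y\,e_{k-1}^{-1}d_k^{-1}\in U_{k+1}(\Gder)$. Then
\[
y\,(e_{k-1}d_k)^{-1}=w\,\bigl(d_ke_{k-1}d_k^{-1}e_{k-1}^{-1}\bigr).
\]
The commutator of $d_k\in U_k(\Gder)$ with $e_{k-1}\in U_r(\Gder)$ lies in $U_{k+r}(\Gder)\subset U_{k+1}(\Gder)$ by \eqref{eq:commutator-leading-general}, since $r\ge1$. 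Equivalently, conjugation by $U_r$ acts trivially on $U_k/U_{k+1}$. With either fix, the rest of your argument (convergence of the partial products to $y$ and closedness of $D$) goes through unchanged.
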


\begin{proof}
First, surjectivity persists at every deeper level.  Suppose it holds
at level $s\ge r$ and take a prescribed class
$X\in\frakg^{\mathrm{der}}_{\Fp}$ at level $s+1$. Choose
$x\in D\cap U_s(\Gder)$ with leading term $X$.  By
\eqref{eq:p-power-leading-general}, $x^p$ belongs to
$D\cap U_{s+1}(\Gder)$ and has the same leading term $X$ at level
$s+1$.  Induction gives surjectivity at every level.

Now fix $u\in U_r(\Gder)$.  Construct elements $P_s\in D$ such that
\[
        uP_s^{-1}\in U_s(\Gder).
\]
Start with $P_r=1$.  If $P_s$ has been constructed, put
$E_s=uP_s^{-1}$.  Choose $d_s\in D\cap U_s(\Gder)$ with the same
class as $E_s$ in $U_s/U_{s+1}$.  Then
$E_sd_s^{-1}\in U_{s+1}$.  Set
\[
        P_{s+1}=d_sP_s.
\]
Since $P_{s+1}^{-1}=P_s^{-1}d_s^{-1}$,
\[
 uP_{s+1}^{-1}=E_sd_s^{-1}\in U_{s+1}.
\]
Thus $P_s\to u$.  Closedness of $D$ gives $u\in D$.  Since $u$ was
arbitrary, $U_r(\Gder)\subset D$.
\end{proof}

\begin{proposition}\label{prop:large-image}
The representation $\rho$ constructed above satisfies
\[
        U_{r_0}(\Gder)
        \subset\mathcal D
        \subset\rho(G_{\{p\}}).
\]
\end{proposition}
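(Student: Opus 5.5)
The proof combines Proposition~\ref{prop:full-lie-layer} with Lemma~\ref{lem:successive-lifting}, applied to the closed subgroup $D=\mathcal D$. First, $\mathcal D\subset\rho(G_{\{p\}})$. The group $G_{\{p\}}$ is compact and $\rho$ is continuous, so $\mathcal H=\rho(G_{\{p\}})$ is a compact, hence closed, subgroup of $\GG(\Zp)$. Its commutator subgroup $[\mathcal H,\mathcal H]$ lies in $\mathcal H$, so the closure $\mathcal D$ does as well. We have also already observed that $\mathcal D\subset\Gder(\Zp)$, since commutators in $\GG(\Zp)$ lie in $\Gder(\Zp)$. Thus $\mathcal D$ is a closed subgroup of $\Gder(\Zp)$, as Lemma~\ref{lem:successive-lifting} requires.

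Second, the hypothesis of Lemma~\ref{lem:successive-lifting} at level $r=r_0$ is exactly Proposition~\ref{prop:full-lie-layer}. By the definition \eqref{eq:Phi-general}, $\Phi_{r_0}(\mathcal D)$ is the image of $\mathcal D\cap U_{r_0}(\Gder)$ in $U_{r_0}(\Gder)/U_{r_0+1}(\Gder)\simeq\frakg^{\mathrm{der}}_{\Fp}$. So the equality $\Phi_{r_0}(\mathcal D)=\frakg^{\mathrm{der}}_{\Fp}$ is precisely the surjectivity of the natural map. We also have $r_0\ge 2q\ge1$. Lemma~\ref{lem:successive-lifting} then gives $U_{r_0}(\Gder)\subset\mathcal D$, and together with the first step this yields the chain of inclusions in the statement.

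The step requiring the most care is the identification of leading terms taken in $\GG$ with those taken in $\Gder$. Corollary~\ref{cor:persistent-roots} and the commutator computations in Lemmas~\ref{lem:simple-roots-derived} and~\ref{lem:all-root-vectors} measure leading terms in $U_q(\GG)$, whereas $\Phi_r(\mathcal D)$ is defined using $U_r(\Gder)$. I would check this point explicitly. The inclusion $\Gder\hookrightarrow\GG$ is a closed immersion of smooth group schemes, so $U_r(\Gder)=U_r(\GG)\cap\Gder(\Zp)$. Moreover, the induced map on congruence quotients is the inclusion $\frakg^{\mathrm{der}}_{\Fp}\hookrightarrow\frakg_{\Fp}$, and injectivity there follows because $\frakg^{\mathrm{der}}_{\Zp}$ is a saturated sublattice, by smoothness of $\Gder$. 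Hence an element of $\mathcal D\cap U_r(\GG)$ has its $\Gder$-leading term equal to its $\GG$-leading term, and this term lies in $\frakg^{\mathrm{der}}_{\Fp}$. Granting this compatibility, which was used implicitly in the preceding lemmas, the proposition follows directly from Proposition~\ref{prop:full-lie-layer} and Lemma~\ref{lem:successive-lifting}.
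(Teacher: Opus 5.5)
Your proposal is correct and follows the paper's proof. Proposition~\ref{prop:full-lie-layer} is exactly the surjectivity hypothesis of Lemma~\ref{lem:successive-lifting} for the closed subgroup $\mathcal D\subset\Gder(\Zp)$ at level $r_0$, and the second inclusion holds because $\mathcal D$ is the closed commutator subgroup of the closed image $\rho(G_{\{p\}})$. You also supply two points the paper leaves implicit, without changing its route: closedness of the image, from compactness of $G_{\{p\}}$, and the agreement of $\Gder$- and $\GG$-leading terms via the injection $\frakg^{\mathrm{der}}_{\Fp}\hookrightarrow\frakg_{\Fp}$, which uses the closed immersion together with smoothness.
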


\begin{proof}
Proposition~\ref{prop:full-lie-layer} says exactly that
$\mathcal D\cap U_{r_0}(\Gder)$ surjects onto the first congruence
quotient at depth $r_0$.  Lemma~\ref{lem:successive-lifting} therefore
gives $U_{r_0}(\Gder)\subset\mathcal D$.  The second inclusion is
immediate from the definition of the closed commutator subgroup.
\end{proof}

We first record two elementary topological facts used in passing from
the derived group to the full reductive group.

\begin{lemma}\label{lem:one-dimensional-torus-open}
Let $p$ be odd and let $C$ be a closed subgroup of the one-dimensional
split torus $\Gm(\Zp)=\Zp^\times$.  If the projection of $C$ to the
pro-$p$ factor $1+p\Zp$ is nontrivial, then $C$ is open in
$\Zp^\times$.
\end{lemma}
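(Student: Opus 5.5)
The proof rests on the structure $\Zp^\times=\Delta_p\times(1+p\Zp)$, with $\Delta_p$ finite of order $p-1$ and $1+p\Zp\simeq\Zp$ as topological groups; the isomorphism is $\log$, which works because $p$ is odd. Let $\pi:\Zp^\times\to1+p\Zp$ be the projection. Then $\pi(C)$ is a closed subgroup of $1+p\Zp$, being the continuous image of the compact group $C$. By hypothesis it is nontrivial. The closed subgroups of $\Zp$ are $0$ and $p^k\Zp$ for $k\ge0$, so $\pi(C)=1+p^{k+1}\Zp$ for some $k$, which is open in $1+p\Zp$.

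Next I would show that $C$ itself contains an open subgroup. The cleanest route is to raise to the power $p-1$. For $c\in C$, write $c=\omega(c)\langle c\rangle$ with $\omega(c)\in\Delta_p$ and $\langle c\rangle=\pi(c)$. Then $c^{p-1}=\langle c\rangle^{p-1}$, so
\[
C\supseteq C^{p-1}\supseteq \pi(C)^{p-1}.
\]
Here $C^{p-1}$ denotes the set of $(p-1)$-th powers, which lies in $C$ because $C$ is a group. Since $p-1$ is a unit in $\Zp$, the map $x\mapsto x^{p-1}$ is an automorphism of $1+p\Zp\simeq\Zp$. Hence $\pi(C)^{p-1}=\pi(C)=1+p^{k+1}\Zp$, and this is contained in $C$.

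Finally, $1+p^{k+1}\Zp$ is an open subgroup of $\Zp^\times$. A subgroup containing an open subgroup is open, being a union of cosets of it, so $C$ is open. No step is a genuine obstacle. The only points needing care are the classification of closed subgroups of $\Zp$ and the invertibility of $p-1$; the latter is exactly what makes the torsion factor $\Delta_p$ disappear when taking $(p-1)$-th powers.
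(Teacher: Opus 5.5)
Your proof is correct. The first half matches the paper: both use the decomposition $\Zp^\times=\mu_{p-1}\times(1+p\Zp)$, the logarithm, and the classification of nonzero closed subgroups of $\Zp$ to conclude that $\pi(C)$ is open in $1+p\Zp$. You are more careful than the paper in justifying, via compactness, that $\pi(C)$ is closed.

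The two arguments diverge only at the last step. The paper says that, since $\mu_{p-1}$ is finite, $C$ has finite index in $\Zp^\times$, and then uses that a closed subgroup of finite index is open. Implicitly this rests on $C\mu_{p-1}=\mu_{p-1}\times\pi(C)$, which gives
\[
[\mu_{p-1}\times\pi(C):C]=[\mu_{p-1}:C\cap\mu_{p-1}]\le p-1.
\]
You instead use $(p-1)$-th powers to kill the torsion factor, and show directly that the open subgroup $\pi(C)=1+p^{k+1}\Zp$ lies inside $C$.

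Your route gives more structural information: $C=(C\cap\mu_{p-1})\times\pi(C)$. It also avoids the ``closed of finite index implies open'' step. The paper's route uses only the finiteness of the complementary factor, not the coprimality of $p-1$ and $p$.

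One small point deserves a clause in your write-up. You need $x\mapsto x^{p-1}$ to map $\pi(C)$ onto itself, not merely $1+p\Zp$ onto itself. This holds because $\pi(C)$ corresponds under the logarithm to the $\Zp$-submodule $p^{k+1}\Zp$, and $p-1\in\Zp^\times$.
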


\begin{proof}
For odd $p$ one has
\[
        \Zp^\times=\mu_{p-1}\times(1+p\Zp).
\]
The $p$-adic logarithm identifies $1+p\Zp$ topologically with the
additive group $p\Zp$, and every nonzero closed subgroup of $p\Zp$
is of the form $p^a\Zp$ for some $a\ge1$.  Thus the pro-$p$
projection of $C$ is open in $1+p\Zp$.  Since the complementary
factor $\mu_{p-1}$ is finite, $C$ has finite index in
$\Zp^\times$ and is therefore open.
\end{proof}

\begin{lemma}\label{lem:derived-and-quotient-open}
Let $H\subset\GG(\Zp)$ be a closed subgroup.  Suppose that
$H\cap\Gder(\Zp)$ is open in $\Gder(\Zp)$ and that the image of
$H$ in $(\GG/\Gder)(\Zp)$ is open.  Then $H$ is open in
$\GG(\Zp)$.
\end{lemma}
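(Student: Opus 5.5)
The plan is a standard Lie-theoretic open-subgroup argument, using the fact that $\GG(\Zp)$ is a compact $p$-adic Lie group. Write $\pi:\GG\to\mathbf S=\GG/\Gder$ for the quotient map. Put $H_0=H\cap\Gder(\Zp)$, which by hypothesis is open in $\Gder(\Zp)$. By hypothesis $\pi(H)$ is open in $\mathbf S(\Zp)$.

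The goal is to show that $H$ has finite index in $\GG(\Zp)$. Since $H$ is closed, finite index implies open, because the complement is a finite union of closed cosets. I argue with indices as follows.

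\textbf{Step 1: the map $\pi$ on $\Zp$-points.} The map $\pi_{\Zp}:\GG(\Zp)\to\mathbf S(\Zp)$ has kernel $\Gder(\Zp)$. Its image $\pi(\GG(\Zp))$ is a closed subgroup of $\mathbf S(\Zp)$, since $\GG(\Zp)$ is compact.

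\textbf{Step 2: a decomposition of the index.} Consider the chain $H\subset H\cdot\Gder(\Zp)\subset\GG(\Zp)$. Then
\[
[\GG(\Zp):H\,\Gder(\Zp)]=[\pi(\GG(\Zp)):\pi(H)],
\]
and
\[
[H\,\Gder(\Zp):H]=[\Gder(\Zp):H_0].
\]
The second equality comes from the bijection $\Gder(\Zp)/H_0\to H\Gder(\Zp)/H$.

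\textbf{Step 3: both indices are finite.} The index $[\Gder(\Zp):H_0]$ is finite because $H_0$ is open in the compact group $\Gder(\Zp)$. The group $\pi(H)$ is open in $\mathbf S(\Zp)$, hence open in the compact subgroup $\pi(\GG(\Zp))$. Therefore $[\pi(\GG(\Zp)):\pi(H)]$ is finite.

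\textbf{Step 4: conclusion.} Multiplying the two indices from Step 2 shows that $[\GG(\Zp):H]$ is finite, so $H$ is open.

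\textbf{The delicate point.} The only step needing care is Step 1, specifically the identification of the kernel and the fact that $\pi(H)$ is open in the image. I would not try to prove that $\pi_{\Zp}$ is surjective. It need not be, since $H^1(\Zp,\Gder)$ can be nonzero for a non-simply-connected derived group, and the argument does not require surjectivity.
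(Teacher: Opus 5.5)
Your argument is correct and is essentially the paper's: both pass through the intermediate subgroup $H\,\Gder(\Zp)$ and use $[H\,\Gder(\Zp):H]=[\Gder(\Zp):H\cap\Gder(\Zp)]<\infty$. The only difference is cosmetic: the paper observes directly that $H\,\Gder(\Zp)=\pi^{-1}(\pi(H))$ is open, whereas you obtain its finite index from the compactness of $\pi(\GG(\Zp))$. Your closing aside is inaccurate but harmless: $\pi$ is in fact surjective on $\Zp$-points, since each fibre is a torsor under the smooth group $\Gder$, which has connected fibres, so $H^1(\Zp,\Gder)\simeq H^1(\Fp,\Gder_{\Fp})=0$ by Hensel's lemma and Lang's theorem (the nonvanishing you have in mind is that of $H^1(\Qp,\Gder)$).
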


\begin{proof}
Let $q:\GG(\Zp)\to(\GG/\Gder)(\Zp)$ be the quotient map and put
$V=q(H)$, which is open by hypothesis.  Then $q^{-1}(V)$ is open in
$\GG(\Zp)$.  If $g\in q^{-1}(V)$, choose $h\in H$ with
$q(h)=q(g)$.  Then $h^{-1}g\in\Gder(\Zp)$, so
\[
        q^{-1}(V)=H\,\Gder(\Zp).
\]
Consequently
\[
 [q^{-1}(V):H]
 \le [\Gder(\Zp):H\cap\Gder(\Zp)]<\infty.
\]
Thus $H$ has finite index in the open subgroup $q^{-1}(V)$, and hence
$H$ is open in $\GG(\Zp)$.
\end{proof}
We now prove the main theorem.
\begin{proof}[Proof of Theorem \ref{thm:main}]If $\GG$ is semisimple, Proposition~\ref{prop:large-image} already
shows that $\rho(G_{\{p\}})$ is open in $\GG(\Zp)$.  Suppose now that
$\dim Z(\GG)=1$ and put $\mathbf S=\GG/\Gder$.  The final
representation $\rho$ agrees with the initial finite-level
representation modulo $p^{j_0}$, and the latter agrees with
$\rho_\infty$ modulo $p^A$.  Hence the image of $\rho$ in
$\mathbf S(\Zp)$ has the same reduction modulo $p^A$ as the nonzero
cyclotomic character
$\lambda^{\mathrm{ab}}\circ\chi$.  By the additional choice of $A$ in
Section~\ref{sec:initial}, this reduction already has nontrivial
pro-$p$ image. A closed subgroup of the one-dimensional $p$-adic torus
with nontrivial pro-$p$ image is open by
Lemma~\ref{lem:one-dimensional-torus-open}.  Thus the image of
$\rho$ in the central quotient is open.  Together with the principal
congruence subgroup in $\Gder(\Zp)$, Lemma~\ref{lem:derived-and-quotient-open}
proves that $\rho(G_{\{p\}})$ is open in $\GG(\Zp)$.

It remains to obtain infinitely many representations.  The construction
works for every sufficiently large choice of the width parameter $R$:
if $R$ is enlarged, the cocharacter $\lambda$, the finite group
$H^2(G_{\{p\}},T)$, the cocycles $z_\alpha$, and the constants $s$ and
$C$ remain fixed; one simply chooses $A$ still larger so that
\eqref{eq:A-choice} holds.  The resulting characteristic-zero
representation satisfies
\[
        \rho^{(R)}\equiv\rho_\infty\pmod{p^R}.
\]
Every open-image representation $\rho^{(R)}$ differs from the toral
representation $\rho_\infty$, because the latter has image contained in
$\TT(\Zp)$.  Suppose finitely many distinct representations
$\rho^{(R_1)},\ldots,\rho^{(R_m)}$ have already been chosen.  For each
$i$, choose a level $b_i$ at which
$\rho^{(R_i)}\not\equiv\rho_\infty\pmod{p^{b_i}}$.  Choose a new width
$R>\max_i b_i$ and carry out the construction.  Then the new
representation is congruent to $\rho_\infty$ modulo every $p^{b_i}$,
so it cannot equal any of the previous representations.  Induction
produces infinitely many pairwise distinct representations.

In fact, the same construction gives infinitely many
$\GG(\Zp)$-conjugacy classes, which is the stronger and more natural
form of the infinitude statement.  Suppose that
$\rho_1,\ldots,\rho_m$ have already been chosen in pairwise distinct
$\GG(\Zp)$-conjugacy classes.  For each $i$ consider, for $b\ge1$,
\[
 \mathcal C_{i,b}
 =\left\{g\in\GG(\Zp):
    g\rho_i g^{-1}\equiv\rho_\infty\pmod{p^b}
  \right\}.
\]
Each $\mathcal C_{i,b}$ is closed in the compact group
$\GG(\Zp)$, and the sets are nested as $b$ increases.  If
$\mathcal C_{i,b}$ were nonempty for every $b$, compactness would give
an element in their intersection.  Such an element would conjugate
$\rho_i$ to $\rho_\infty$ exactly.  This is impossible, because
$\rho_i$ has open derived image whereas $\rho_\infty$ has image in
the torus $\TT(\Zp)$.  Hence for each $i$ there is an integer $b_i$
such that $\mathcal C_{i,b_i}=\varnothing$.

Choose the next width $R$ larger than every $b_i$.  The newly
constructed representation $\rho^{(R)}$ satisfies
$\rho^{(R)}\equiv\rho_\infty\pmod{p^R}$, and therefore cannot be
conjugate to any $\rho_i$: otherwise a conjugating element would
belong to $\mathcal C_{i,b_i}$.  Induction produces infinitely many
pairwise nonconjugate representations.
\end{proof}

We finish by checking every hypothesis in the general theorem for
$\GG=\GL_n$. 

\begin{proof}[Proof of Corollary\ref{cor:GLn}]

Take the diagonal torus.  Its cocharacter lattice is $\Z^n$, and for
\[
        \lambda(t)=\operatorname{diag}
        (t^{a_1},\ldots,t^{a_n})
\]
the root $e_i-e_j$ has weight $a_i-a_j$.  Let $\cE_p$ be the finite
exceptional set from Proposition~\ref{prop:finite-exceptional}.  We
choose $a_1,\ldots,a_n$ recursively so that
\begin{enumerate}[label=\textup{(\arabic*)}]
\item $a_i\equiv i\pmod2$;
\item $a_i-a_j\notin\cE_p\cup\{0\}$ for $i\neq j$;
\item all ordered differences $a_i-a_j$, $i\neq j$, are pairwise
      distinct;
\item $a_1+\cdots+a_n\neq0$.
\end{enumerate}
At the $(r+1)$-st stage the prescribed parity leaves an infinite
arithmetic progression of possible values for $a_{r+1}$.  Avoiding a
root weight in the finite exceptional set excludes finitely many
values.  Equality of a new difference with an old difference gives a
linear equation in $a_{r+1}$ and therefore excludes one value.  Equality
between two new ordered differences gives either an equality already
forced by their indices or an equation of the form
$2a_{r+1}=a_i+a_j$, again excluding at most one value.  Thus only
finitely many integers of the prescribed parity are forbidden.  At the
last stage we also avoid the single value which would make the total
sum zero.  This proves the required choice for every $n$ and every odd
$p$.

The adjacent differences
\[
        d_i=a_i-a_{i+1}
\]
are odd.  Hence the positive and negative simple roots satisfy the odd
cocharacter condition.  All root weights avoid $\cE_p$, so the
rational obstruction vanishing used in the general proof holds.  The
nonzero sum $a_1+\cdots+a_n$ says that the determinant of the toral
representation is the nonzero cyclotomic power
\[
        \det\rho_\infty=\chi^{a_1+\cdots+a_n},
\]
which has open image in $\Zp^\times$.

It remains only to check the Lie-good condition.  In the standard
matrix realization take
\[
        X_{e_i-e_j}=e_{ij}.
\]
Whenever $i,j,k$ are distinct,
\[
        [e_{ij},e_{jk}]=e_{ik},
\]
so every nonzero root-string structure constant needed to generate the
higher roots is $\pm1$.  These constants remain invertible modulo every
prime.  Opposite roots satisfy
\[
        [e_{ij},e_{ji}]=e_{ii}-e_{jj}.
\]
The off-diagonal matrix units together with
\[
        e_{11}-e_{22},\ldots,e_{n-1,n-1}-e_{nn}
\]
span $\mathfrak{sl}_n(\Fp)$ for every prime $p$.  This remains true
when $p\mid n$: the trace-zero diagonal space still has dimension
$n-1$ and is spanned by the adjacent differences.  Thus no prime is
excluded by the Chevalley or coroot calculations.

Finally, the $p$-power congruence calculation used in
\eqref{eq:p-power-leading-general} requires only that $p$ be odd.  No
step of the proof compares $p$ with $n$, and no class-group or
irregularity condition remains.  Theorem~\ref{thm:main} therefore
applies to $\GL_n$ for every odd prime $p$ and every $n>1$, proving
Corollary~\ref{cor:GLn}.
\end{proof}

\bibliographystyle{amsplain}
\bibliography{references}

\end{document}